\theoremstyle{plain}
\newtheorem{theorem}{Theorem}[section]
\newtheorem{proposition}[theorem]{Proposition}
\newtheorem{lemma}[theorem]{Lemma}
\newtheorem{corollary}[theorem]{Corollary}
\theoremstyle{definition}
\newtheorem{remark}[theorem]{Remark}
\newtheorem{definition}[theorem]{Definition}
\newtheorem*{condition}{Condition}
\newcommand{\RR}{\field{R}}
\def\nn{\ensuremath{\mathscr N}}
\DeclareSymbolFont{cyrillic}{T2A}{cmr}{m}{n}
\DeclareMathSymbol{\D}{\mathalpha}{cyrillic}{196}
\newcommand{\N}{\mathbb{N}}
\newcommand{\R}{\mathbb{R}}
\def\I{\ensuremath{{\bf 1}}}
\newcommand{\dist}{\operatorname{{dist}}}
\def\R{\ensuremath{\mathbb R}}
\def\N{\ensuremath{\mathbb N}}
\def\I{\ensuremath{{\bf 1}}}
\def\e{{\ensuremath{\rm e}}}
\def\S{\ensuremath{\mathcal S}}
\def\RR{\ensuremath{\mathcal R}}
\def\B{\ensuremath{\mathcal B}}
\def\M{\ensuremath{\mathcal M}}
\def\l{{\rm Leb}}
\def\P{\ensuremath{\mathcal P}}
\def\p{\ensuremath{\mathbb P}}
\def\A{\ensuremath{A^{(q)}}}
\def\AA{\ensuremath{\mathcal A}}
\def\AAk{\ensuremath{\mathcal A_{q,n}^{(\kappa)}}}
\def\X{\mathcal{X}}
\def\ie{{\em i.e.}, }
\def\dist{\ensuremath{\text{dist}}}
\def\eps{\varepsilon}
\def\cyl{\text{Z}}
\def\cv{\ensuremath{\text {Cor}}}
\newcommand{\dif}{\mathrm{d}}
\author[D. Azevedo]{Davide Azevedo}
\address{Davide Azevedo\\Centro de Matem\'{a}tica da Universidade do Porto\\ Rua do
Campo Alegre 687\\ 4169-007 Porto\\ Portugal}
\email {davidemsa@gmail.com}
\author[A. C. M. Freitas]{Ana Cristina Moreira Freitas}
\address{Ana Cristina Moreira Freitas\\ Centro de Matem\'{a}tica \&
Faculdade de Economia da Universidade do Porto\\ Rua Dr. Roberto Frias \\
4200-464 Porto\\ Portugal} \email{amoreira@fep.up.pt}
\author[J. M. Freitas]{Jorge Milhazes Freitas}
\address{Jorge Milhazes Freitas\\ Centro de Matem\'{a}tica \& Faculdade de Ci\^encias da Universidade do Porto\\ Rua do
Campo Alegre 687\\ 4169-007 Porto\\ Portugal}
\email{jmfreita@fc.up.pt}
\urladdr{http://www.fc.up.pt/pessoas/jmfreita}
\author[F.B. Rodrigues]{Fagner Bernardini Rodrigues}
\address{Fagner Bernardini Rodrigues\\Instituto de Matem\'atica - Universidade Federal do Rio Grande do Sul\\
Av. Bento Gonçalves, 9500 - Prédio 43-111 - Agronomia\\
Caixa Postal 15080\
91509-900 Porto Alegre - RS - Brasil
} \email{fagnerbernardini@gmail.com }
\thanks{ACMF was partially supported by FCT grant SFRH/BPD/66174/2009. JMF was partially supported by FCT grant SFRH/BPD/66040/2009. Both these grants are financially supported by the program POPH/FSE. DA, ACMF, JMF are partially  supported by FCT (Portugal) project PTDC/MAT/120346/2010, which is funded by national and European structural funds through the programs  FEDER and COMPETE. FBR was supported by BREUDS, a Brazilian-European partnership of the FP7-PEOPLE-2012-IRSES program, with project number 318999, which is supported by an FP7 International International Research Staff Exchange Scheme (IRSES) grant of the European Union. All authors were partially supported by CMUP (UID/MAT/00144/2013), which is funded by FCT (Portugal) with national (MEC) and European structural funds through the programs FEDER, under the partnership agreement PT2020. All authors thank Mike Todd for careful reading and suggestions.}
\subjclass[2000]{37A50, 60G55, 60G70, 37B20, 60G10, 37C25.}
\begin{document}

\title{Clustering of extreme events created by multiple correlated maxima}

\date{\today}


\maketitle
\begin{abstract}
We consider stochastic processes arising from dynamical systems by evaluating an observable function along the orbits of the system. The novelty is that we will consider observables achieving a global maximum value (possible infinite) at multiple points with special emphasis for the case where these maximal points are correlated or bound by belonging to the same orbit of a certain chosen point. These multiple correlated maxima can be seen as a new mechanism creating clustering. We recall that clustering was intimately connected with periodicity when the maximum was achieved at a single point. We will study this mechanism for creating clustering and will address the existence of limiting Extreme Value Laws, the repercussions on the value of the  Extremal Index, the impact on the limit of Rare Events Points Processes, the influence on clustering patterns and the competition of domains of attraction. We also consider briefly and for comparison purposes multiple uncorrelated maxima. The systems considered include expanding maps of the interval such as Rychlik maps but also maps with an indifferent fixed point such as Manneville-Pommeau maps.
\end{abstract}

\section{Introduction}

In the past few years, the study of Extreme Value Theory (EVT) for dynamical systems has been a subject of much interest. We mention in particular the inspiring paper of Collet \cite{C01} and refer to \cite{F13} for a review on further developments and references. Extreme events (which occur with small probability and for that reason are also called rare events) are characterised by abnormally high observations that are identified as exceedances of high thresholds.

In this dynamical setting the observation data comes from starting the system at a certain initial state and evaluate a certain observable function along the subsequent states through which the system goes while time goes by. Then, exceedances of a high threshold correspond to entrances or hits of the orbits of the system to some designated target sets on the phase space. As the threshold 
increases, the respective target sets shrink. This fact explains the connection between the existence of distributional limits, called Extreme Value Laws (EVL), for the partial maximum of observations when the thresholds increase to the possible maximum value, and the existence of distributional limits, called Hitting Times Statistics (HTS), for the waiting time before hitting the corresponding target sets as they shrink. This connection hinted in \cite{C01} was formally established in \cite{FFT10, FFT11}.

In the study of HTS, in most cases, the neighbourhoods are either cylinder sets or metric balls and shrink to a point $\zeta$ in the phase space. In the study of EVL the observable $\varphi$ has a global maximum at $\zeta$ and, in almost all cases, the exceedances correspond to metric balls around $\zeta$. The limiting laws for HTS and EVL were proved to be the same in \cite{FFT10, FFT11} and typically one obtains a standard exponential distribution $H(\tau)=1-\e^{-\tau}$, with $\tau\geq 0$. Typically, here, is used in the sense that in most results it is shown that for almost every point $\zeta$ (with respect to the invariant measure) one gets a standard exponential HTS and EVL. See for example \cite{HSV99, C01, BSTV03, HNT12, CC13,PS14} and \cite{S09} for an excellent review.

Following the work of Hirata \cite{H93} on Axiom A diffeomorphisms, it is known that at periodic points, \ie when $\zeta$ is a periodic point, a parameter $0\leq \theta\leq 1$ appears in the asymptotic distribution: $H(\tau)=1-\e^{-\theta\tau}$. This observation was further developed in the paper \cite{HV09}, where a compound Poisson distribution for the number of hits to target sets composed of unions of dynamical cylinders was obtained for $\psi$ mixing measures. Then in \cite{FFT12}, using the relation between HTS and EVL, a new technique based on suitably adjusted dependent conditions allowed to show the existence of EVL (and consequently HTS) in the case $\zeta$ is a periodic point and the target sets are metric balls for systems with a strong form of decay of correlations. Due to the connection with EVL the parameter $0\leq \theta\leq 1$ appearing in the limit  distribution $H(\tau)=1-\e^{-\theta\tau}$ was identified to be the Extremal Index (EI), a parameter already appearing in the context of classical EVT and which can be seen as the inverse of the average cluster size. 

Moreover, in \cite{FFT12}, for uniformly expanding systems such as the doubling map, a dichotomy was shown which states that either $\zeta$ is periodic and we have an EI with a very precise formula depending on the expansion rate at $\zeta$, or for every non-periodic $\zeta$, we have an EI equal to 1 (which means no clustering). The dichotomy was then obtained for more general systems such as: conformal repellers \cite{FP12}, systems with spectral gaps for the Perron-Frobenius operator \cite{K12} and systems with strong decay of correlations \cite{AFV14}.

The study of EVL and HTS can be enhanced by considering  Rare Events Point Processes (REPP). These point processes keep record of the exceedances of the high thresholds by counting the number of such exceedances on a rescaled time interval. At typical points it was known that the REPP converge in distribution to a standard Poisson process. From \cite{HV09}, at periodic points, we expect that the REPP converge to compound Poisson process instead. In \cite{FFT13}, a study regarding the convergence of REPP was performed for metric balls as target sets around periodic points $\zeta$ and for non-uniformly hyperbolic systems. The limit process obtained for the convergence in distribution of the REPP in \cite{FFT13} was a compound Poisson process which could be described as combination of Poisson process ruling the positions of the clusters in the time line being that these positions are marked by the cluster sizes ruled by a geometric multiplicity distribution. Both components are determined by the value of EI $\theta$.

Again, as proved in \cite{AFV14}, for continuous systems with strong decay of correlations a dichotomy regarding the convergence of REPP holds: either it converges to a compound Poisson distribution at repelling periodic points $\zeta$, with a geometric multiplicity distribution, or it converges to a standard Poisson process at every other non-periodic point $\zeta$. Moreover, as already seen in \cite{FFT13}, the clustering pattern observed at repelling periodic points $\zeta$ obeys a very rigid pattern:  it consists of `bulk' of strictly decreasing exceedances observed at precise fixed times, corresponding to the period.

Contrary to the customary case studies, in which the set of maximising points $\mathcal M$ of the observable $\varphi$ is reduced to a single point $\zeta$, in this paper we will consider the case of multiple maxima, with special emphasis for the case of correlated maxima, where $\mathcal M$ consists of finite number of points bound together by belonging to the same orbit. This binding gives a mechanism of creating clustering of exceedances, which we will study and explore below. We will also consider the case of uncorrelated maxima but the focus will be turned to the correlated case.

We remark that in the literature not all examples regarding the study of rare events are reduced to the cases in which the target sets shrink to a point. For example, in \cite{CCC09} the authors consider a sets generated by a proper subshift of finite type of a one-sided irreducible and aperiodic shift of finite type and in \cite{KL09} the authors consider target sets shrinking to the diagonal of the phase space of a system obtained by coupling two expanding interval maps. The setting which probably most resembles ours is that studied in \cite{HNT12}, where the authors, in the EVL approach, consider observables with multiple maxima that are chosen independently as typical points for the invariant probability measure. This way, for these uncorrelated maxima, they obtained the standard exponential distribution as limiting EVL, which means that there is no clustering of exceedances.

In classical Extreme Value Theory, it is usual to use normalising thresholds $(u_n)_{n\in\N}$ that are linear sequences depending on a parameter $y$, say $u_n=y/a_n+b_n$. As explained in \cite{FFT10}, the behaviour of the observable $\varphi$ as a function of the distance to $\zeta$ is responsible for determining the type of EVL that applies: Gumbel, Fr\'echet or Weibull. Hence, an interesting question that immediately arises when considering multiple maxima is the competition between the different types.

By taking multiple maxima over the same orbit we will see the following consequences:
\begin{enumerate}

\item  appearance of clustering not caused necessarily by periodic orbits;

\item the possibility of creating different clustering patterns (not reduced to a bulk of strictly decreasing observations over the threshold);

\item the possibility of affecting the EI when we already consider periodic points;

\item different multiplicity distributions  for the limit of REPP;

\item competition between different types of distributions.

\end{enumerate}

The systems considered include systems with decay of correlations against $L^1$ observables such as Rychlik maps \cite{R83} and piecewise uniformly expanding maps in higher dimensions studied in \cite{S00} but also non-uniformly expanding maps such as intermittent maps.

We believe that this study will be a precursor of further developments regarding more sophisticated maximal sets $\mathcal M$, that could be, for example, submanifolds of codimension 1. In fact, some work in that direction is already being done by some of the authors and we believe it carries a large potential of applications since it will more easily accommodate physical observables in applications like to meteorology, where the set  $\mathcal M$ plays the role of a critical set, which is the source of abnormal and possibly catastrophic events. Understanding the geometry and the recurrence properties of $\mathcal M$, as we will see here in a much simpler context, will provide knowledge about the extremal behaviour like for example the clustering patterns that could help devise early warning systems.

An example of possible application is to structural failures. The anticipated study of the type of the clustering patterns of a certain natural phenomenon is of crucial importance, on one side to predict the likelihood of such failure and on, another side, to help designing the material and structures to stand stronger against the natural causes they have to face.

The unfolding of such possibilities will give deeper understanding of the possible outputs of EVT for dynamical systems, as it will open a door for modelling physical phenomena with some underlying periodic effect, which is often difficult to perceive. Moreover, again, these short recurrence mechanisms have an enormous potential as a source of examples for the classical EVT of stochastic processes and serving as a model for several practical situations.

\section{The setting and background}
Take a system  $(\mathcal{X},\mathcal B,\mu,f)$, where $\mathcal{X}$ is a Riemannian manifold, $\mathcal{B}$ is the Borel
$\sigma$-algebra, $f:\mathcal{X}\to\mathcal{X}$ is a measurable map
and $\mu$ an $f$-invariant probability measure.
Suppose that the time series $X_0, X_1,\ldots$ arises from such a system simply by evaluating a given  observable $\varphi:\mathcal{X}\to\mathbb{R}\cup\{\pm\infty\}$ along the orbits of the system, or in other words, the time evolution given by successive iterations by $f$:
\begin{equation}
\label{eq:def-stat-stoch-proc-DS} X_n=\varphi\circ f^n,\quad \mbox{for
each } n\in {\mathbb N}.
\end{equation}
Clearly, $X_0, X_1,\ldots$ defined in this way is not necessarily an independent sequence.  However, $f$-invariance of $\mu$ guarantees that this stochastic process is stationary.
\vspace{0.5cm}\\
We suppose that the r.v. $\varphi:\mathcal{X}\to\mathbb{R}\cup\{\pm\infty\}$
has $N$ global maxima $\xi_1,\dots,\xi_N\in \mathcal{X}$ (we allow $\varphi(\xi_1)=+\infty$) and assume they all belong to the orbit of the point $\zeta$. (See \eqref{eq:maxima-bind} below).
Suppose $\varphi$ and $\mu$
are sufficiently regular in the following sense:

\begin{enumerate}
\item[{(R1)}]
for $u$ sufficiently close to $u_F:=\varphi(\xi_i)$ $(i\in \{1,\ldots,N\})$,
\begin{equation*}
\label{def:U}
U(u):=\{x\in\mathcal{X}:\; \varphi(x)>u\}=\{X_0>u\}
\end{equation*}
corresponds to a union of balls  centered at the points $\xi_i$, i.e., $U(u)=\bigcup_{i=1}^NB_{\varepsilon_i}(\xi_i)$
with $\varepsilon_i=\varepsilon_i(u)$. Moreover, the quantity $\mu(U(u))$, as a function of $u$, varies continuously on a neighborhood of $u_F$.

\end{enumerate}

\subsection{Extreme Value Laws}

In this paper, we will use an extreme value approach rather than an hitting times approach, which we have already mentioned to be two sides of the same coin as can be fully appreciated in \cite{FFT10,FFT11}.

We are interested in studying the extremal behaviour of the stochastic process $X_0, X_1,\ldots$ which is tied to the occurrence of exceedances of high levels $u$. The occurrence of an exceedance at time $j\in\mathbb{N}_0$ means that the event $\{X_j>u\}$ occurs, where $u$ is close to $u_F$. Observe that a realisation of the stochastic process $X_0, X_1,\ldots$ is achieved if we pick, at random and according to the measure $\mu$, a point $x\in\mathcal{X}$, compute its orbit and evaluate $\varphi$ along it. Then, saying that an exceedance occurs at time $j$ means that the orbit of the point $x$ hits one of the balls in $U(u)$ at time $j$, i.e., $f^j(x)\in B_{\varepsilon_i}(\xi_i)$ for some $i\in\{1,...,N\}$.

Given a stochastic process $X_0, X_1, \ldots$ we define a new sequence of random variables  $M_1, M_2,\ldots$ given by
\begin{equation}
\label{eq:Mn-definition}
M_n=\max\{X_0, X_1, \ldots, X_{n-1}\}.
\end{equation}

On the independent context, \ie when the stochastic process $X_0, X_1,\ldots$ is a sequence of independent and identically distributed (i.i.d.) r.v., the first statement regarding $M_n$ is that $M_n$ converges almost surely (a.s.) to $u_F$. Then, the next natural question is whether we can find a distributional limit for $M_n$, when conveniently normalised.
\begin{definition}
We say that we have an \emph{Extreme Value Law} (EVL) for $M_n$ if there is a non-degenerate d.f. $H:\R\to[0,1]$ with $H(0)=0$ and,  for every $\tau>0$, there exists a sequence of levels $u_n=u_n(\tau)$, $n=1,2,\ldots$,  such that
\begin{equation}
\label{eq:un}
  n\mu(X_0>u_n)\to \tau,\;\mbox{ as $n\to\infty$,}
\end{equation}
and for which the following holds:
\begin{equation}
\label{eq:EVL-law}
\mu(M_n\leq u_n)\to \bar H(\tau),\;\mbox{ as $n\to\infty$.}
\end{equation}
where the convergence is meant at the continuity points of $H(\tau)$.
\end{definition}

The motivation for using a normalising sequence $(u_n)_{n\in\N}$ satisfying \eqref{eq:un} comes from the case when $X_0, X_1,\ldots$ are independent and identically distributed (i.i.d.). In this setting, it is clear that $\mu(M_n\leq u)= (F(u))^n$, where $F$ is the d.f. of $X_0$. Hence, condition \eqref{eq:un} implies that
\begin{equation}
\label{eq:iid-maxima}
\mu(M_n\leq u_n)= (1-\mu(X_0>u_n))^n\sim\left(1-\frac\tau n\right)^n\to\e^{-\tau},
\end{equation}
as $n\to\infty$. Moreover, the reciprocal is also true (see \cite[Theorem~1.5.1]{LLR83} for more details). Note that in this case $H(\tau)=1-\e^{-\tau}$ is the standard exponential d.f.

\subsection{The existence of Extreme Value Laws}\label{section:EVL}

In \cite{FFT15}, the authors synthesised the conditions in \cite{FF08a}, in the absence of clustering, and in \cite{FFT12}, in the presence of clustering, to a couple of general conditions that apply to to general stationary stochastic processes, both in the presence and absence of clustering, which allow to prove the existence of EVL. Moreover, these conditions are particularly tailored to the application of dynamical systems and follow from a strong form of decay of correlations (against $L^1$ observables) to be defined below.

In what follows for every $A\in\mathcal B$, we denote the complement of $A$ as $A^c:=\mathcal X\setminus A$.

For some $u\in\R$, $q\in \N$, we define the events:
\begin{align}
\label{eq:U-A-def}
U(u)&:= \{X_0>u\},\nonumber\\
\AA_q(u)&:=U(u)\cap\bigcap_{i=1}^{q}f^{-i}(U(u)^c)=\{X_0>u, X_1\leq u, \ldots, X_q\leq u\}.
\end{align}
where $\AA_q(u)$ corresponds to the case where we have an extreme event at time zero that is not followed by another one up to time $t=q$. This is a condition clearly pointing to the absence of clustering. We also set $\AA_0(u):=U(u)$, $U_n:=U(u_n)$ and $\AA_{q,n}:=\AA_q(u_n)$, for all $n\in\N$ and $q\in\N_0$.
Let
\begin{equation}
\label{def:thetan}
\theta_n:=\frac{\mu\left(\AA_{q,n}\right)}{\mu(U_n)}.
\end{equation}

Let $B\in\B$ be an event. For some $s\geq0$ and $\ell\geq 0$, we define:
\begin{equation}
\label{eq:W-def}
\mathscr W_{s,\ell}(B)=\bigcap_{i=\lfloor s\rfloor}^{\lfloor s\rfloor+\max\{\lfloor\ell\rfloor-1,\ 0\}} f^{-i}(B^c).
\end{equation}
The notation $f^{-i}$ is used for the preimage by $f^i$. We will write $\mathscr W_{s,\ell}^c(B):=(\mathscr W_{s,\ell}(B))^c$.
Whenever is clear or unimportant which event $B\in\B$ applies, we will drop the $B$ and write just $\mathscr W_{s,\ell}$ or $\mathscr W_{s,\ell}^c$.
Observe that
\begin{equation}
\label{eq:EVL-HTS}
\mathscr W_{0,n}(U(u))=\{M_n\leq u\}\qquad \mbox{and}\qquad f^{-1}(\mathscr W_{0,n}(B))=\{r_{B}>n\}.
\end{equation}

\begin{condition}[$\D_q(u_n)$]\label{cond:D} We say that $\D(u_n)$ holds for the sequence $X_0,X_1,\ldots$ if for every  $\ell,t,n\in\N$,
\begin{equation}\label{eq:D1}
\left|\mu\left(\AA_{q,n}\cap
 \mathscr W_{t,\ell}\left(\AA_{q,n}\right) \right)-\mu\left(\AA_{q,n}\right)
  \mu\left(\mathscr W_{0,\ell}\left(\AA_{q,n}\right)\right)\right|\leq \gamma(q,n,t),
\end{equation}
where $\gamma(q,n,t)$ is decreasing in $t$ for each $n$ and, there exists a sequence $(t_n)_{n\in\N}$ such that $t_n=o(n)$ and
$n\gamma(q,n,t_n)\to0$ when $n\rightarrow\infty$.
\end{condition}

For some fixed $q\in\N_0$, consider the sequence $(t_n)_{n\in\N}$, given by condition  $\D(u_n)$ and let $(k_n)_{n\in\N}$ be another sequence of integers such that
\begin{equation}
\label{eq:kn-sequence}
k_n\to\infty\quad \mbox{and}\quad  k_n t_n = o(n).
\end{equation}

\begin{condition}[$\D'_q(u_n)$]\label{cond:D'q} We say that $\D'_q(u_n)$
holds for the sequence $X_0,X_1,X_2,\ldots$ if there exists a sequence $(k_n)_{n\in\N}$ satisfying \eqref{eq:kn-sequence} and such that
\begin{equation}
\label{eq:D'rho-un}
\lim_{n\rightarrow\infty}\,n\sum_{j=q+1}^{\lfloor n/k_n\rfloor-1}\mu\left( \AA_{q,n}\cap f^{-j}\left(\AA_{q,n}\right)
\right)=0.
\end{equation}
\end{condition}

From \cite[Corollary~2.4]{FFT15} follows that if the stochastic process satisfies both conditions $\D_q(u_n)$ and $\D'_q(u_n)$ for some $q\in\N_0$, where $(u_n)_{n\in\N}$ satisfies \eqref{eq:un}, then $\lim_{n\to\infty}\mu(M_n\leq u_n)=\e^{-\theta\tau}$, whenever the limit $\theta=\lim_{n\to\infty}\theta_n$ exists.

In this approach, it is rather important to observe the prominent role played by condition $\D'_q(u_n)$. In particular, note that if condition $\D'_q(u_n)$ holds for some particular $q=q_0\in\N_0$, then condition $\D'_q(u_n)$ holds for all $q\geq q_0$, which also implies that if the limit of $\theta_n$ in \eqref{def:thetan} exists for such $q=q_0$ it will also exist for all $q\geq q_0$ and takes always the same value. This suggests that in trying to find the existence of EVL, one should try the values $q=q_0$ until we find the smallest one that makes $\D'_q(u_n)$ hold. With that purpose, as in \cite[Theorem~3.7]{FFT15}, we consider the following.
Let $A\in\B$. We define a function that we refer to as \emph{first hitting time function} to $A$, denoted by $r_A:\X\to\N\cup\{+\infty\}$ where
\begin{equation}
\label{eq:hitting-time}
r_A(x)=\min\left\{j\in\N\cup\{+\infty\}:\; f^j(x)\in A\right\}.
\end{equation}
The restriction of $r_A$ to $A$ is called the \emph{first return time function} to $A$. We define the \emph{first return time} to $A$, which we denote by $R(A)$, as the infimum of the return time function to $A$, \ie
\begin{equation}
\label{eq:first-return}
R(A)=\inf_{x\in A} r_A(x).
\end{equation}
Assume that there exists $q\in\N_0$ such that
\begin{equation}
\label{eq:q-def EVL}
q:=\min\left\{j\in\N_0: \lim_{n\to\infty}R(A_n^{(j)})=\infty\right\}.
\end{equation}
Then such $q$ is the natural candidate to try to show the validity of $\D'_q(u_n)$.


\subsection{Rare Events Point Processes}

A more sophisticated way of studying rare events consists in studying Rare Events Point Processes (REPP). These point processes keep record of the exceedances of the high thresholds $u_n$ by counting the number of such exceedances on a rescaled time interval. For every $A\subset\R$ we define
\[
\nn_{u_n}(A):=\sum_{i\in A\cap\N_0}\I_{X_i>u_n}.
\]

In order to provide a proper framework of the problem we introduce next the necessary formalism to state the results regarding the convergence of point processes. We recommend the book of Kallenberg \cite{K86} for further reading.

Consider the interval $[0,\infty)$ and its Borel $\sigma$-algebra $\B_{[0,\infty)}$. Let $x_1, x_2, \ldots \in [0,\infty)$ and define
$$
\nu=\sum_{i=1}^\infty \delta_{x_i},
$$
where $\delta_{x_i}$ is the Dirac measure at $x_i$, \ie for every $A\in\B_{[0,\infty)}$, we have that $\delta_{x_i}(A)=1$ if $x_i\in A$ or
$\delta_{x_i}(A)=0$, otherwise. The measure $\nu$ is said to be a counting measure on $[0,\infty)$. Let $\M_p([0,\infty))$ be the space of counting measures on $([0,\infty), \B_{[0,\infty)})$.  We equip this space with the vague topology, i.e., $\nu_n\to \nu$ in $\M_p([0,\infty))$ whenever $\nu_n(\psi)\to \nu(\psi)$ for any continuous function $\psi:[0,\infty)\to \R$ with compact support.  A \emph{point process} $N$ on $[0,\infty)$ is a random element of $\M_p([0,\infty))$, \ie let $(X, \mathcal \B_X, \mu)$ be a probability space, then any measurable map $N:X\to \M_p([0,\infty))$ is a point process on $[0,\infty)$.

To give a concrete example of a point process, which in particular will appear as the limit of the REPP, we consider:
\begin{definition}
\label{def:compound-poisson-process}
Let $T_1, T_2,\ldots$ be  an i.i.d. sequence of r.v. with common exponential distribution of mean $1/\theta$. Let  $D_1, D_2, \ldots$ be another i.i.d. sequence of r.v., independent of the previous one, and with d.f. $\pi$. Given these sequences, for $J\in\B_{[0,\infty)}$, set
$$
N(J)=\int \I_J\;d\left(\sum_{i=1}^\infty D_i \delta_{T_1+\ldots+T_i}\right),
$$
where $\delta_t$ denotes the Dirac measure at $t>0$. Let $X$ denote the space of all possible realisations of $T_1, T_2,\ldots$ and   $D_1, D_2, \ldots$, equipped with a product $\sigma$-algebra and measure, then $N:X\to \M_p([0,\infty))$ is a point process which we call a compound Poisson process of intensity $\theta$ and multiplicity d.f. $\pi$.
\end{definition}
\begin{remark}
\label{rem:poisson-process}
Throughout the paper, the multiplicity will always be integer valued which means that $\pi$ is completely defined by the values $\pi_k=\p(D_1=k)$, for every $k\in\N_0$. Note that, if $\pi_1=1$ and $\theta=1$, then $N$ is the standard Poisson process and, for every $t>0$, the random variable $N([0,t))$ has a Poisson distribution of mean $t$.
\end{remark}

In order to define the REPP we need to rescale time. 
We do it using the factor $v_n:=1/\p(X_0>u_n)$ given by Kac's Theorem. However, before we give the definition, we need some more formalism. Let $\S$ denote the semi-ring of subsets of  $\R_0^+$ whose elements
are intervals of the type $[a,b)$, for $a,b\in\R_0^+$. Let $\RR$
denote the ring generated by $\S$. Recall that for every $J\in\RR$
there are $k\in\N$ and $k$ intervals $I_1,\ldots,I_k\in\S$ such that
$J=\cup_{i=1}^k I_j$. In order to fix notation, let
$a_j,b_j\in\R_0^+$ be such that $I_j=[a_j,b_j)\in\S$. For
$I=[a,b)\in\S$ and $\alpha\in \R$, we denote $\alpha I:=[\alpha
a,\alpha b)$ and $I+\alpha:=[a+\alpha,b+\alpha)$. Similarly, for
$J\in\RR$ define $\alpha J:=\alpha I_1\cup\cdots\cup \alpha I_k$ and
$J+\alpha:=(I_1+\alpha)\cup\cdots\cup (I_k+\alpha)$.

\begin{definition}
We define the \emph{rare event point process} (REPP) by
counting the number of exceedances during the (rescaled) time period $v_nJ\in\RR$, where $J\in\RR$. To be more precise, for every $J\in\RR$, set
\begin{equation}
\label{eq:def-REPP} N_n(J):=\nn_{u_n}(v_nJ)=\sum_{j\in v_nJ\cap\N_0}\I_{X_j>u_n}.
\end{equation}
\end{definition}

We will see that the REPP considered here converge to a compound Poisson process. For completeness, we define here what we mean by convergence of point processes (see \cite{K86} for more details).
\begin{definition}
\label{def:convergence-point-processes}
Let $(N_n)_{n\in\N}:X\to  \M_p([0,\infty))$ be a sequence of point processes defined on a probability space $(X,\mathcal B_X, \mu)$ and let $N:Y \to  \M_p([0,\infty))$ be another point process defined on a possibly distinct probability space $(Y,\mathcal B_Y, \nu)$. We say that $N_n$ converges in distribution to $N$  if $\mu\circ N_n^{-1}$ converges weakly to $\nu\circ N^{-1}$, \ie for every continuous function $\varphi$ defined on $\M_p([0,\infty))$, we have $\lim_{n\to\infty}\int \varphi d \mu\circ N_n^{-1}=\int \varphi d \mu\circ N^{-1}$.  We write $N_n \stackrel{\mu}{\Longrightarrow} N $.
\end{definition}

\begin{remark}
\label{rem:convergence-point-processes}
It can be shown that $(N_n)_{n\in\N}$ converges in distribution to $N$ if the sequence of vector r.v.  $(N_n(J_1), \ldots, N_n(J_k))$ converges in distribution to $(N(J_1), \ldots, N(J_k))$, for every $k\in\N$ and all $J_1,\ldots, J_k\in\S$ such that $N(\partial J_i)=0$ a.s., for $i=1,\ldots,k$.
\end{remark}

Note that
\begin{equation}
\label{eq:rel-HTS-EVL-pp}
\{\nn_{u_n}([0,n))=0\}=\{M_n\leq u_n\},
\end{equation}
hence the limit distribution of $M_n$ can be easily recovered from the convergence of the REPP.

\subsection{The convergence of  REPP}
In  \cite{FFR15}, the authors are preparing a synthesised version of the conditions in \cite{FFT10}, in the absence of clustering, and in \cite{FFT13}, in the presence of clustering, to a couple of general conditions that apply to general stationary stochastic processes, both in the presence and absence of clustering, which allow to prove the convergence of REPP. Moreover, these conditions, as the previous $\D_q(u_n)$ and $\D'(u_n)$, are particularly tailored to the application of dynamical systems and follow from a strong form of decay of correlations (against $L^1$ observables) to be defined below.

Before we state the next condition we introduce some notation which will be useful in the rest of the paper.
Let $\left(U^{(\kappa)}(u)\right)_{\kappa\geq0}$ be a sequence  of nested open sets defined by:
\begin{align*}
\label{def:U-k}
&U^{(0)}(u):=U(u)
\\
&\AA_q^{(0)}(u):=\{X_0>u,X_1\leq u,...,X_q\leq u\}=U^{(0)}(u)\cap\bigcap_{i=1}^{q}f^{-i}\left((U^{(0)}(u))^c\right).
\\&U^{(\kappa)}(u):=U^{(\kappa-1)}(u)-A_q^{(\kappa-1)}(u)\quad \mbox{and}\quad \AA^{(\kappa)}_q(u):=U^{(\kappa)}(u)\cap\bigcap_{i=1}^{q}f^{-i}\left((U^{(\kappa)}(u))^c\right).
\end{align*}
Let
\begin{align*}
\theta_n:=\frac{\mu(\AA_{q,n}^{(0)})}{\mu(U_n)} \mbox{ and } \pi_n(\kappa)=\frac{\mu(\AA^{(\kappa)}_{q,n})-
\mu(\AA^{(\kappa+1)}_{q,n})}{\mu(\AA^{(0)}_{q,n})}.
\end{align*}
We notice that the set $\AA^{(0)}_{q,n}$ was defined in Section~\ref{section:EVL} and named $\AA_{q,n}$.
Since we need more
\begin{condition}[$\D_q(u_n)^*$]\label{cond:Dp*}We say that $\D_q(u_n)^*$ holds
for the sequence $X_0,X_1,X_2,\ldots$ if for any integers $t, \kappa_1,\ldots,\kappa_\varsigma$, $n$ and
 any $J=\cup_{i=2}^\varsigma I_j\in \mathcal R$ with $\inf\{x:x\in J\}\ge t$,
 \[ \left|\mu\left(\AA_{q,n}^{(\kappa_1)}\cap \left(\cap_{j=2}^\varsigma \nn_{u_n}(I_j)=\kappa_j \right) \right)-\mu\left(\AA_{q,n}^{(\kappa_1)}\right)
  \mu\left(\cap_{j=2}^\varsigma \nn_{u_n}(I_j)=\kappa_j \right)\right|\leq \gamma(n,t),
\]
where for each $n$ we have that $\gamma(n,t)$ is nonincreasing in $t$  and
$n\gamma(n,t_n)\to0$  as $n\rightarrow\infty$, for some sequence
$t_n=o(n)$.
\end{condition}
In  \cite[Corollary~1]{FFR15} the authors show that under conditions $\D_q(u_n)^*$  and $\D_q'(u_n)$,
where $(u_n)_n$ satisfies \eqref{eq:un},
the REPP $(N_n)_{n\in\mathbb N}$ converges in distribution to a point process $N$ whenever the limits $\theta=\lim_{n\to\infty}\theta_n$
and $\pi(\kappa)=\lim_{n\to\infty}\pi_n(\kappa)$ exist (for all $\kappa\in\mathbb N$).

If one assumes that the maximum of $f$ occurs in a repelling periodic point $\zeta$ and  the  probability  $\mu$
is sufficiently regular then
\begin{enumerate}
\item[(R2)]
the fact that $\zeta$ is repelling means that we have backward contraction implying that there exists $0<\theta<1$ so that $\bigcap_{j=0}^i f^{-jp}(X_0>u)$ is another ball of smaller radius around $\zeta$ with $$\mu\left(\bigcap_{j=0}^i f^{-jp}(X_0>u)\right)\sim(1-\theta)^i\mu(X_0>u),$$ for all $u$ sufficiently large.
\end{enumerate}
Condition $(R2)$ guarantees that $\theta_n=\frac{\mu\big(\AA_{q,n}^{(0)}\big)}{\mu\big(U_n\big)}$ converges
to $\theta$. Moreover, by \cite[Theorem~1]{FFT13} the REPP $(N_n)_{n\in\mathbb N}$ converges in distribution to a compound Poisson process
$N$ with intensity  $\theta$ and multiplicity distribution given by $\pi(\kappa)=\theta(1-\theta)^{\kappa-1}$.

\section{Multiple correlated maxima}

In this paper the novelty of the approach resides inn the fact that instead of considering observables $\varphi:\X\to\R$ achieving a global maximum at a single point $\zeta\in\X$, we assume that the maximum is achieved at $N$ points denoted by $\xi_1, \xi_2\ldots,\xi_N$, where in almost all cases (except in Sections~\ref{subsec:non-correlated-non-periodic} and \ref{subsec:non-correlated-periodic}), all these points are correlated by belonging to the orbit of a certain point $\zeta\in\X$, \ie there exist $m_1< m_2<\ldots< m_N\in\N_0$ such that
\begin{equation}
\label{eq:maxima-bind}
\xi_i=f^{m_i}(\zeta),
\end{equation}
 after possible relabelling of the maximal points. Let $\mathcal M:=\{\xi_1, \xi_2\ldots,\xi_N\}$ be the set of maximal points of $\varphi$. We will see that the binding between the maximal points expressed in \eqref{eq:maxima-bind} is responsible for a fake periodic behaviour which creates clustering of exceedances. Moreover, this mechanism of creating clustering turns out to be more flexible and allows to obtain easily different multiplicity distributions for the limit of the REPP (we recall that at repelling periodic points one obtains a Geometric multiplicity distribution) and different clustering patterns (we recall that at repelling periodic points the clusters consist of a strictly decreasing bulk of exceedances occurring at precise periodic intervals).

\subsection{Further assumptions on the observable}
Besides having $N$ global maxima $\xi_1,\dots,\xi_N$, we assume that the observable $\varphi$ also
satisfies
\begin{equation}\label{def:observable}
\varphi(x)=h_i(\dist (x,\xi_i)),\; \forall x\in B_{\varepsilon_i}(\xi_i),\;\;\; i\in\{1,...,N\}
\end{equation}
where $B_{\varepsilon_i}(\xi_i) \cap B_{\varepsilon_j}(\xi_j)=\emptyset$ and  the function
$h_i:[0,+\infty)\to \mathbb{R}\cup\{\infty\}$ is such that 0 is a global maximum ($h_i(0)$ may be $+\infty$),
 $h_i$ is a strictly decreasing bijection $h_i : V \to W$ in a neighbourhood $V$ of 0; and has one of the following three types of behaviour:

\begin{enumerate}
  \item  Type 1: there exists some strictly positive function $g : W \to \mathbb{R}$ such that for all $y \in\mathbb{R} $
\begin{equation}
\label{eq:type1}
\lim_{ s\to g_1(0)}\frac{h_i^{-1}(s+yg(s))}{h_i^{-1}(s)}= e^{-ˆ'y};
\end{equation}
 \item  Type 2: $h_i(0) = +\infty$ and there exists $\beta>0$ such that for all $y > 0$
 \begin{equation}
\label{eq:type2}
\lim_{s\to\infty}\frac{h_i^{-1}(sy)}{h_i^{-1}(s)}=y^{-\beta};
 \end{equation}
  \item Type 3: $h_i(0) = D < +\infty$ and there exists $\gamma > 0$ such that for all $y > 0$
  \begin{equation}
  \label{eq:type3}
  \lim_{s\to0}\frac{ h^{-1}_i(Dˆ-sy)}{h^{-1}_i(D-ˆ's)}= y^\gamma.
  \end{equation}
\end{enumerate}

\begin{remark}
\label{rem:R1}
Note that as long as the invariant measure has no atoms, then under the assumptions above on the observable we have that condition ($R1$) is easily satisfied.
\end{remark}

\subsection{Assumptions on the system and examples of application}

We assume that the system admits a first return time induced map with decay of correlations against $L^1$ observables. In order to clarify what is meant by the latter we define:
\begin{definition}[Decay of correlations]
\label{def:dc}
Let \( \mathcal C_{1}, \mathcal C_{2} \) denote Banach spaces of real valued measurable functions defined on \( \X \).
We denote the \emph{correlation} of non-zero functions $\phi\in \mathcal C_{1}$ and  \( \psi\in \mathcal C_{2} \) w.r.t.\ a measure $\p$ as
\[
\cv_\p(\phi,\psi,n):=\frac{1}{\|\phi\|_{\mathcal C_{1}}\|\psi\|_{\mathcal C_{2}}}
\left|\int \phi\, (\psi\circ f^n)\, \dif\p-\int  \phi\, \dif\p\int
\psi\, \dif\p\right|.
\]

We say that we have \emph{decay
of correlations}, w.r.t.\ the measure $\p$, for observables in $\mathcal C_1$ \emph{against}
observables in $\mathcal C_2$ if, for every $\phi\in\mathcal C_1$ and every
$\psi\in\mathcal C_2$ we have
 $$\cv_\p(\phi,\psi,n)\to 0,\quad\text{ as $n\to\infty$.}$$
  \end{definition}

We say that we have \emph{decay of correlations against $L^1$
observables} whenever  this holds for $\mathcal C_2=L^1(\p)$  and
$\|\psi\|_{\mathcal C_{2}}=\|\psi\|_1=\int |\psi|\,\dif\p$.

If a system already has decay of correlations against $L^1$ observables, then by taking the whole set $\X$ as the base for the first return time induced map, which coincides with the original system, then the assumption we impose on the system is trivially satisfied. Examples of systems with such property include:
\begin{itemize}

\item Uniformly expanding maps on the circle/interval (see \cite{BG97});

\item Markov maps (see \cite{BG97});

\item Piecewise expanding maps of the interval with countably many branches like Rychlik maps (see \cite{R83});

\item Higher dimensional piecewise expanding maps studied by Saussol in \cite{S00}.

\end{itemize}

\begin{remark}
\label{rem:C-space}
In the first three examples above the Banach space $\mathcal C_1$ for the decay of correlations can be taken as the space of functions of bounded variation. In the fourth example  the Banach space $\mathcal C_1$ is the space of functions with finite quasi-H\"older norm studied in \cite{S00}. We refer the readers to \cite{BG97,S00} or \cite{AFV14} for precise definitions but mention that if $J\subset \R$ is an interval then $\I_J$ is of bounded variation and its BV-norm is equal to 2, \ie $\|\I_J\|_{BV}=2$ and if $A$ denotes a ball or an annulus then $\I_A$ has a finite quasi-H\"older norm.
\end{remark}

Although the examples above are all in some sense uniformly hyperbolic, we can consider non-uniformly hyperbolic systems, such as intermittent maps, which admit a `nice' first return time induced map over some subset $Y\subset \X$, called the base of the induced map. To be more precise, consider the usual original system as $f:\X\to \X$ with an ergodic $f$-invariant probability measure $\mu$, choose a subset $Y\subset \X$ and consider $F_Y:Y\to Y$ to be the first return map $f^{r_Y}$ to $Y$ (note that $F$ may be undefined at a zero Lebesgue measure set of points which do not return to $Y$, but most of these points are not important, so we will abuse notation here).  Let $\mu_Y(\cdot)=\frac{\mu(\cdot \cap Y)}{\mu(Y)}$ be the conditional measure on $Y$.  By Kac's Theorem $\mu_Y$ is $F_Y$-invariant.

In \cite{BSTV03}, the authors show that for typical points $\zeta\in Y$ the HTS to balls around $\zeta$ is the same both for the original and induced maps. This means, in particular, that the existence of EVL for the induced map implies the existence of an EVL for the original system, at typical points, and the limit is actually the same. In \cite{FFT13}, a similar statement was obtained but this time for the convergence of the REPP at periodic points having the same compound Poisson limit. In  \cite{HayWinZwe14} the authors show that regardless of the point $\zeta\in Y$ taken, the existence of a limit for the HTS (or EVL) for the induced first return time map implies the existence of a limit for the HTS (or EVL) of the original system, at $\zeta$, and the limits coincide. This was generalised for the convergence of REPP in \cite{FFTV15}. Namely,
setting $v_n^Y=1/\mu_Y(X>u_n)$, for the induced process $X_i^Y$,
$$N_n^Y(J):=\nn_{u_n}^Y(v_n^YJ)=\sum_{j\in v_n^YJ\cap\N_0}\I_{X_j^Y>u_n}.$$
 Denote the speeded up return time $r_A$ by $r_{A, Y}$ and the induced measure on $Y$ by $\mu_Y$.
\begin{theorem}[{\cite[Theorem~3]{FFTV15}}]
For $\eta>0$, setting $J_\eta:=\cup_{s\in J}B_{\eta}(s)$, we assume that $N(J_\eta)$ is continuous in $\eta$, for all small $\eta$.
$$N_n^Y\stackrel{\mu_Y}{\Longrightarrow} N \text{ as } n\to\infty \text{ implies } N_n \stackrel{\mu}{\Longrightarrow} N \text{ as } n\to\infty.$$
\label{thm:pp_ret_orig}
\end{theorem}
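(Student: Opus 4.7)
The plan is to transfer the REPP convergence from $(Y,F_Y,\mu_Y)$ to $(\X,f,\mu)$ via the Kakutani tower representation of $\mu$ over its first-return base. The underlying observation is that, for $n$ sufficiently large, the exceedance set $U(u_n)=\bigcup_{i=1}^N B_{\varepsilon_i(u_n)}(\xi_i)$ shrinks to $\mathcal M\subset Y$ and hence lies in $Y$, so every exceedance of $u_n$ along an $f$-orbit occurs exactly at an $f$-return time to $Y$. Consequently, the original and induced exceedances are in bijection along each orbit, and the two time scales are correctly related by $v_n^Y=\mu(Y)v_n$, which follows from $\mu_Y(X>u_n)=\mu(X>u_n)/\mu(Y)$ under this inclusion.

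Quantitatively, for $\mu_Y$-a.e.\ $y\in Y$ let $\tau_i(y)=\sum_{k=0}^{i-1}r_Y(F_Y^k(y))$ denote the $i$-th return time of $y$ to $Y$. Since $r_Y\in L^1(\mu_Y)$ with $\int r_Y\,d\mu_Y=1/\mu(Y)$ by Kac and $(Y,F_Y,\mu_Y)$ is ergodic, Birkhoff yields $\tau_i(y)/i\to 1/\mu(Y)$ $\mu_Y$-almost surely. Fix a finite collection $J_1,\ldots,J_\ell\in\S$ and $k_1,\ldots,k_\ell\in\N_0$. For any $\eta>0$, combining orbit-wise matching of exceedances with this Birkhoff estimate, on a set of $\mu_Y$-measure tending to $1$ as $n\to\infty$,
\begin{equation*}
N_n^Y(J_{i,\eta}^{-})(y)\;\leq\;N_n(J_i)(y)\;\leq\;N_n^Y(J_{i,\eta})(y),\qquad i=1,\ldots,\ell,
\end{equation*}
where $J_{i,\eta}$ is the outer $\eta$-thickening of $J_i$ (as in the hypothesis) and $J_{i,\eta}^{-}:=\{s\in J_i:B_\eta(s)\subset J_i\}$ is the corresponding inner one.

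To pass from $y\in Y$ to general $x\in\X$, use the Kakutani tower identity
\begin{equation*}
\int_{\X}F\,d\mu=\mu(Y)\int_{Y}\sum_{j=0}^{r_Y(y)-1}F(f^j(y))\,d\mu_Y(y),
\end{equation*}
with $F=\I_{\{N_n(J_i)=k_i,\,i\leq\ell\}}$. The shift $0\leq j<r_Y(y)$ is $\mu_Y$-a.s.\ finite, so $j/v_n\to 0$ and can be absorbed into a further $\eta$-modification of the intervals, producing a sandwich of $\mu\bigl(\bigcap_i\{N_n(J_i)=k_i\}\bigr)$ between expressions of the form $\mu(Y)\,\E_{\mu_Y}\bigl[r_Y\,\I_{B^{\pm}_\eta}(N_n^Y)\bigr]$ for events $B^{\pm}_\eta$ governed by $J_{i,\pm\eta}$. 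Applying the induced hypothesis $N_n^Y\stackrel{\mu_Y}{\Longrightarrow}N$ at the continuity sets $J_{i,\pm\eta}$, letting $\eta\downarrow 0$, and invoking the standing continuity of $\eta\mapsto N(J_\eta)$ at $\eta=0$ to collapse both bounds, yields
\begin{equation*}
\mu\bigl(\textstyle\bigcap_{i=1}^{\ell}\{N_n(J_i)=k_i\}\bigr)\;\longrightarrow\;\p\bigl(\textstyle\bigcap_{i=1}^{\ell}\{N(J_i)=k_i\}\bigr).
\end{equation*}
By Remark~\ref{rem:convergence-point-processes} this is equivalent to $N_n\stackrel{\mu}{\Longrightarrow}N$ under $\mu$.

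The main obstacle is handling the weight $r_Y$ inside the Kakutani integrand: distributional convergence of the indicator under $\mu_Y$ does not by itself yield convergence of $\E_{\mu_Y}[r_Y\cdot(\text{indicator})]$, because $r_Y$ is unbounded. The standard fix is truncation at some level $K$, using $\int_{\{r_Y>K\}}r_Y\,d\mu_Y\to 0$ as $K\to\infty$ (finite by Kac) to control the tail uniformly in $n$, or equivalently verifying uniform integrability of the family $(r_Y\,\I_{B}(N_n^Y))_n$. A secondary technical point is promoting the pointwise Birkhoff statement to one that is uniform over $i\leq v_n^Y t$ on a set of $\mu_Y$-measure approaching $1$, which is handled by a maximal ergodic inequality for the ergodic sums of $r_Y$.
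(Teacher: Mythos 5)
The paper does not prove this theorem; it is quoted as an external result from \cite{FFTV15}, so there is no internal argument to compare with. Assessed on its own, your reduction is set up correctly: for large $n$ the exceedance set lies in $Y$, Birkhoff gives $\tau_i(y)/i\to 1/\mu(Y)$, the inner/outer $\eta$-sandwich relates $N_n$ to $N_n^Y$, and the Kakutani identity passes to $\mu$. You also correctly locate the residual difficulty in the $r_Y$-weighted integrand. But you misdiagnose that difficulty, and the truncation/uniform-integrability fix does not close the argument.

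The problem is not that $r_Y$ is unbounded. After the tower identity and the sandwich you need
\begin{equation*}
\mu(Y)\,\E_{\mu_Y}\bigl[r_Y\,\I_{B_n}\bigr]\;\longrightarrow\;\p(N\in B),
\end{equation*}
whereas the hypothesis gives only $\mu_Y(B_n)\to\p(N\in B)$. The difference is $\mu(Y)\,\mathrm{Cov}_{\mu_Y}(r_Y,\I_{B_n})$, and no integrability argument controls this covariance. Even with $r_Y$ bounded — say $r_Y\in\{1,2\}$ with $\mu_Y$-probability $1/2$ each, so $\mu(Y)=2/3$ — an event $B_n$ correlated with $\{r_Y=2\}$ would have $\mu_Y(B_n)=1/2$ but $\mu(Y)\E_{\mu_Y}[r_Y\I_{B_n}]=2/3$. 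Equivalently: the base marginal of the tower measure is $\mu(Y)\,r_Y\,d\mu_Y$, which is absolutely continuous with respect to $\mu_Y$ but not equal to it, and distributional convergence under $\mu_Y$ does not transfer across an absolutely continuous change of measure for free.

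The missing ingredient, and the one \cite{HayWinZwe14} and \cite{FFTV15} actually use, is Zweimüller's asymptotic distributional invariance theorem: if $(Y,F_Y,\mu_Y)$ is ergodic, the random vectors $R_n$ satisfy $R_n\circ F_Y-R_n\to 0$ in $\mu_Y$-probability, and $R_n$ converges in distribution to $R$ under $\mu_Y$, then $R_n$ converges in distribution to $R$ under every probability $\nu\ll\mu_Y$. Taking $R_n=(N_n^Y(J_1),\ldots,N_n^Y(J_\ell))$, the asymptotic shift-invariance holds because one $F_Y$-step moves each counting window by a single unit, and the probability of an exceedance in a window of width $O(1)$ is $O(1/v_n^Y)\to 0$. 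This gives convergence under $\mu(Y)\,r_Y\,d\mu_Y$, which combined with the Kakutani identity and your $\eta$-sandwich yields convergence under $\mu$. Your Egorov/maximal-inequality handling of the uniformity in Birkhoff is fine; but without this invariance step (or, alternatively, a mixing assumption on the induced map, which the theorem does not impose) there is a genuine gap.
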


This means that if we have a system that admits a first return time induced map with decay of correlations against $L^1$, then as long as the points $\xi_1,\ldots, \xi_N\in Y$, the convergence of the REPP (which implies the existence of EVL or HTS) for the original systems is determined by the convergence of the corresponding REPP for the induced map. Hence, we are reduced to proving the convergence of the REPP for systems with decay of correlations against $L^1$ observables.
This fact motivates the following:
\vspace{0.5cm}

\noindent \textbf{Assumption A}
\emph{Let $f:\X\to\X$ be a systems with summable decay of correlations against $L^1$ observables, \ie for all $\varphi\in\mathcal C_1$ and $\psi\in L^1$, then $\cv(\varphi,\psi,n)\leq \rho_n$, with $\sum_{n\geq\N}\rho_n<\infty$. Moreover, we assume that there exists $C>0$ such that for all $n\in\N$, we have $\I_{\A_n}, \I_{\AAk}\in\mathcal C_1$ and $\|\I_{\A_n}\|_{\mathcal C_1}, \|\I_{\AAk}\|_{\mathcal C_1}\leq C$. }
\vspace{0.5cm}

Among the examples of systems with these `nice' induced maps we mention  the \emph{Manneville-Pomeau} (MP) map equipped with an absolutely continuous invariant probability measure.  The form for such maps given in \cite{LSV99, BSTV03} is,   for $\alpha\in (0,1)$,
\begin{equation*}
f=f_\alpha(x)=\begin{cases} x(1+2^\alpha x^\alpha) & \text{ for } x\in [0, 1/2)\\
2x-1 & \text{ for } x\in [1/2, 1]\end{cases}
\end{equation*}
Members of this family of maps are often referred to as Liverani-Saussol-Vaienti maps since their actual equation was first introduced in \cite{LSV99}. Let $\P$ be the \emph{renewal partition}, that is the partition defined inductively by $\cyl\in \P$ if $\cyl=[1/2, 1)$ or $f(\cyl)\in \P$.  Now let $Y\in \P$ and let $F_Y$ be the first return map to $Y$ and $\mu_Y$ be the conditional measure on $Y$.  It is well-known that $(Y, F_Y, \mu_Y)$ is a Bernoulli map and hence, in particular, a Rychlik system (see \cite{R83} or \cite[Section~3.2.1]{AFV14} for the essential information about such systems).

\subsection{Existence of distributional limits under Assumption A}
\label{subsec:existence}

As explained in \cite[Section~5.1]{F13}, conditions $\D_q(u_n)$ and $\D_q(u_n)^*$ are designed to follow easily from decay of correlations. In fact, if we choose $\phi=\I_{\AA_{q,n}}$ and $\psi=\I_{\mathscr W_{0,\ell}\left(\AA_{q,n}\right)}$, in the case of $\D_q(u_n)$, and  $\phi=\I_{\AAk}$ and $\psi=\I_{\cap_{j=2}^\varsigma \nn_{u_n}(I_j)=\kappa_j }$, in the case of $\D_q(u_n)^*$, we have that we can take $\gamma(n,t)=C\rho_t$. Hence, by taking $(t_n)_n$ such that $\lim_{n\to\infty}n\rho_{t_n}=0$, we get that conditions $\D_q(u_n)$ and $\D_q(u_n)^*$ are trivially satisfied.

Now, we turn to condition $\D'_q(u_n)$.
Taking $\phi=\psi=\I_{\AA_{q,n}}$ and since $\|\I_{\AA_{q,n}}\|_{\mathcal C_1}\leq C$ we easily get
\begin{align}
\label{eq:estimate1}
\mu\left(U_n\cap f^{-j}(U_n)\right) \le
 \left(\mu(\AA_{q,n})\right)^2+ \left\| \I_{\AA_{q,n}}\right\|_{\mathcal C_1} \left\| \I_{\AA_{q,n}}\right\|_{L^1(\mu)} \rho_j\leq  \left(\mu(\AA_{q,n})\right)^2+C\mu(\AA_{q,n})\rho_j.
\end{align}
Let $R_n:=R(\AA_{q,n})$. Using estimate \eqref{eq:estimate1} and  $n\mu(U_n)\to \tau$ as $n \to \infty$ it follows that there exists some constant $D>0$ such that
\begin{align*}
n\sum_{j=q+1}^{\lfloor n/k_n \rfloor}& \mu(\AA_{q,n}\cap f^{-j}(\AA_{q,n})) = n\sum_{j=R_n}^{\lfloor n/k_n \rfloor} \mu(\AA_{q,n}\cap f^{-j}(\AA_{q,n}))\\
&\le n\big\lfloor\tfrac {n}{k_n}\big\rfloor\mu(\AA_{q,n})^2 +n\,C\mu(\AA_{q,n}) \sum_{j=R_n}^{\lfloor n/k_n \rfloor}\rho_j\le \frac{\left(n\mu(\AA_{q,n})\right)^2}{k_n} +n\,C\mu(\AA_{q,n}) \sum_{j=R_n}^{\infty}\rho_j\\
& \leq D \left(\frac{\tau^2}{k_n}+\tau \sum_{j=R_n}^{\infty}\rho_j \right)\xrightarrow[n\to\infty]{}0,
\end{align*}
if we check that $\lim_{n\to\infty}R_n=+\infty$.

\section{Maxima along a non-periodic orbit}
\label{sec:nonperiodic}

In this section we consider the case where $\zeta$ is a
non-periodic point. Our goal is to see what are the effects on the REPP
when we compare it with the process obtained by an observable which attains its maximum at
a unique point. As we will see, despite $\zeta$ being non-periodic, the existence of multiple maxima along its orbit affects, for example, the extremal index and the multiplicity distribution.

Note that by ($R1$), we know that, for $n$ sufficiently large, $U_n$ is the union of $N$ disjoint balls around each $\xi_i$ that we denote by $U_n(\xi_i)=U_n^{(0)}(\xi_i)$, for $i=1,\ldots,N$, so that
\begin{equation}
\label{eq:connected-components-U}
U_n=\bigcup_{i=1}^N U_n(\xi_i).
\end{equation}
We begin by choosing a value of $q$ for which condition $\D'(u_n)$ can be checked. From the computations in Section~\ref{subsec:existence} we need to verify that $R(\AA_{q,n})\to\infty$, as $n\to\infty$ for such choice of $q$.

\begin{lemma}
\label{lem:continuity}
Assume that $f$ satisfies Assumption A and that $\zeta $ is a non-periodic point and that $f$ is continuous in every point
of the orbit of $\zeta$, namely $\zeta,f(\zeta),f^2(\zeta)...$. Let $\xi_1,\ldots,\xi_N$ be as in \eqref{eq:maxima-bind}. Let $q=m_N-m_1$, then $\lim_{n\to\infty}R(\AA_{q,n})=\infty$.
\end{lemma}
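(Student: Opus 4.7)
The plan is to argue by contradiction, exploiting the fact that shrinking target balls force any return of $\AA_{q,n}$ into itself to closely imitate the orbit of $\zeta$, which non-periodicity makes impossible in fewer than $q+1$ steps.

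First I would record the trivial lower bound $R(\AA_{q,n})\geq q+1$. Indeed, by the very definition of $\AA_{q,n}$, for any $x\in\AA_{q,n}$ and any $k\in\{1,\ldots,q\}$ we have $f^k(x)\notin U_n\supset \AA_{q,n}$, so the first return to $\AA_{q,n}$ cannot happen before time $q+1$.

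Now suppose for contradiction that $R(\AA_{q,n})$ fails to diverge. Since return times are integer valued, a bounded subsequence must admit a constant further subsequence, so one obtains $n_k\to\infty$, a single integer $J\geq q+1$, and points $x_k\in\AA_{q,n_k}$ with $f^J(x_k)\in\AA_{q,n_k}$. Both $x_k$ and $f^J(x_k)$ lie in $U_{n_k}=\bigcup_{i=1}^N B_{\varepsilon_i(u_{n_k})}(\xi_i)$ (by \eqref{eq:connected-components-U}), and the radii $\varepsilon_i(u_{n_k})$ shrink to zero as $n_k\to\infty$ (a consequence of $n\mu(U_n)\to\tau$ together with the non-atomic character of $\mu$ underlying Remark~\ref{rem:R1}). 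A further extraction therefore yields indices $i_0,\ell_0\in\{1,\ldots,N\}$ with $x_k\to\xi_{i_0}$ and $f^J(x_k)\to\xi_{\ell_0}$.

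The continuity hypothesis on $f$ at every point of the orbit of $\zeta$ propagates by composition to continuity of $f^J$ at $\xi_{i_0}=f^{m_{i_0}}(\zeta)$, because every intermediate iterate $f^j(\xi_{i_0})=f^{m_{i_0}+j}(\zeta)$ lies on the orbit of $\zeta$. Passing to the limit in $f^J(x_k)\to\xi_{\ell_0}$ then gives $f^J(\xi_{i_0})=\xi_{\ell_0}$, i.e.\ $f^{m_{i_0}+J}(\zeta)=f^{m_{\ell_0}}(\zeta)$. Non-periodicity of $\zeta$ forces $m_{i_0}+J=m_{\ell_0}$, whence $J=m_{\ell_0}-m_{i_0}\leq m_N-m_1=q$, contradicting $J\geq q+1$. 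I expect the only step demanding more than routine bookkeeping to be the assertion that the radii $\varepsilon_i(u_n)$ shrink to zero; once this is in hand, the remainder of the argument is pure continuity combined with the rigidity supplied by the non-periodicity of $\zeta$.
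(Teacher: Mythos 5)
Your proof is correct and rests on the same ingredients as the paper's: the automatic bound $R(\AA_{q,n})\geq q+1$ from the definition of $\AA_{q,n}$, continuity of $f$ (hence of each $f^J$) along the orbit of $\zeta$, and non-periodicity of $\zeta$ to exclude short returns once the target balls have shrunk. The only difference is presentational: you argue by contradiction via subsequence extraction, whereas the paper runs the same reasoning forwards, explicitly constructing, for each given $L$, neighbourhoods so small that no point of $\AA_{q,n}$ can return before time $q+L$.
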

\begin{proof}
 Let $q=m_N-m_1$. By definition of $\AA_{q,n}$ we have that $f^j(\AA_{q,n})\cap \AA_{q,n}\subset f^j(\AA_{q,n})\cap U_n=\emptyset$, for all $j=1, \ldots, q$. Hence, we only need to check that the same holds true for all $j=q+1, \ldots, q+L$, where $L$ is a given arbitrarily large integer. For that we will use the continuity of $f$ over the orbit of $\zeta$.
Let
$$
\epsilon:=\displaystyle \min_{1\leq i\leq L,\ 1\leq j\leq N} \dist(f^i(\xi_N),\xi_j).
$$
Using the continuity over the orbit of $\zeta$ and in particular of $\xi_N$, for every $1\leq i\leq L$, consider the balls $B_{\delta_i}(\xi_N)$ such that $f^i(B_{\delta_i}(\xi_N))\subset B_{\eps/2}(f^i(\xi_N))$ and consider the open set $D_N=\cap_{i=1}^L B_{\delta_i}(\xi_N)$. Now, for each $j=1,\ldots, N-1$ let $D_j$ be a ball around $\xi_j$ so that $f^{m_N-m_j}(D_j)\subset D_N$.

Finally, recalling ($R1$) take $n$ sufficiently large so that each $U_n(\xi_i)\subset D_i\cap B_{\eps/2}(\xi_i)$ so that $U_n\subset \cup_{i=1}^N D_i\cap B_{\eps/2}(\xi_i)$. Recall that $\AA_{q,n}\subset U_n$ and then by construction for every $j=1,\ldots, L$ we have that $f^{q+j}(\AA_{q,n})\subset \cup_{i=1}^N B_{\eps/2}(f^{q+j}(\xi_i))$ and since by definition of $\eps$, we have $B_{\eps/2}(f^{q+j}(\xi_i))\cap U_n=\emptyset$, then the result follows.
\end{proof}

\subsection{The clustering effect and estimates for the extremal index}
When the
point $\zeta$ is an absolute maximum and $\zeta $ is non-periodic we know that
the extremal index $\theta$  is equal to 1. The situation is completely different
when the maximum occurs at more than one point along the orbit of $\zeta$.
\begin{proposition}\label{prop:extremal-index-nonperiod}
Assume that $f$ satisfies Assumption A. Let $X_0,X_1,...$ be given by \eqref{eq:def-stat-stoch-proc-DS}, where the observable
$\varphi$ has $N$ maximal points $\xi_1,\ldots,\xi_N$ related by \eqref{eq:maxima-bind} and satisfies \eqref{def:observable}, with $\zeta\in \mathcal X$ a non-periodic
point. In this case
the extremal index $\theta$ is given by
$$
\theta=\lim_{n\to\infty} \frac{\sum_{i=1}^N \mu\left(U_{n}^{(0)}(\xi_i)-\bigcup_{j=i+1}^N f^{-(m_j-m_i)}(U_{n}^{(0)}(\xi_j))\right)}
{\sum_{i=1}^N \mu\left(U_{n}^{(0)}(\xi_i)\right)},
$$
whenever the limit exists.
\end{proposition}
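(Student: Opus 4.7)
My plan is to reduce the computation of $\theta$ to an explicit expression for $\mu(\AA_{q,n})$ via a disjoint decomposition along the connected components of $U_n$. The preliminary step is to verify that with $q = m_N - m_1$, both $\D_q(u_n)$ and $\D_q'(u_n)$ hold. Lemma~\ref{lem:continuity} ensures $R(\AA_{q,n}) \to \infty$, and then Assumption~A combined with the computation carried out in Section~\ref{subsec:existence} delivers both conditions. By \cite[Corollary~2.4]{FFT15}, the extremal index (when the limit exists) is $\theta = \lim_{n\to\infty}\theta_n = \lim_{n\to\infty}\mu(\AA_{q,n})/\mu(U_n)$, so the task reduces to analysing $\mu(\AA_{q,n})$.

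Next I would use \eqref{eq:connected-components-U} and the pairwise disjointness (for $n$ large) of the balls $U_n^{(0)}(\xi_i)$ to decompose
$$\AA_{q,n} \;=\; \bigsqcup_{i=1}^N \Big(U_n^{(0)}(\xi_i) \cap \bigcap_{k=1}^q f^{-k}(U_n^c)\Big).$$
For fixed $i$ and $k\in\{1,\ldots,q\}$, continuity of $f$ along the orbit of $\zeta$ guarantees that $f^k(U_n^{(0)}(\xi_i))$ lies in an arbitrarily small neighbourhood of $f^{k+m_i}(\zeta)$ once $n$ is large. I would then split into two cases. If $k+m_i \notin \{m_1,\ldots,m_N\}$, then by non-periodicity of $\zeta$ the point $f^k(\xi_i)$ is distinct from every $\xi_j$, hence at positive distance from $\mathcal M$, and the constraint $f^k(x)\notin U_n$ is automatic for large $n$. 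If instead $k+m_i = m_j$ for some index $j$, uniqueness of $j$ follows from non-periodicity of $\zeta$; moreover $j>i$ because $k\geq 1$. In this case $f^k(\xi_i)=\xi_j$, the remaining balls $U_n^{(0)}(\xi_\ell)$ for $\ell\neq j$ are separated from $\xi_j$, and the constraint collapses to $x\notin f^{-(m_j-m_i)}(U_n^{(0)}(\xi_j))$. Letting $k$ run through $\{1,\ldots,q\}$ produces exactly one such constraint for every $j\in\{i+1,\ldots,N\}$, namely at $k = m_j-m_i\leq q$.

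Combining these observations, for $n$ large,
$$U_n^{(0)}(\xi_i) \cap \bigcap_{k=1}^q f^{-k}(U_n^c) \;=\; U_n^{(0)}(\xi_i) \setminus \bigcup_{j=i+1}^N f^{-(m_j-m_i)}\!\left(U_n^{(0)}(\xi_j)\right).$$
Summing over $i$ and dividing by $\mu(U_n) = \sum_{i=1}^N \mu(U_n^{(0)}(\xi_i))$ then yields the announced formula. The main obstacle I anticipate is making the case split uniform in $n$: one must find a single threshold $n_0$ such that for every pair $(i,k)$ with $k+m_i\notin\{m_1,\ldots,m_N\}$ one has $f^k(U_n^{(0)}(\xi_i))\cap U_n=\emptyset$ for $n\geq n_0$. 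Since there are only finitely many such pairs and $\mathcal M$ is finite, non-periodicity of $\zeta$ provides a uniform strictly positive lower bound on $\dist(f^k(\xi_i),\mathcal M)$, and shrinking the radii $\varepsilon_i(u_n)$ below this bound (together with the continuity at the orbit of $\zeta$ used in Lemma~\ref{lem:continuity}) secures the automatic case. Once this is in place, the decomposition is exact for large $n$ and the conclusion follows by taking the limit.
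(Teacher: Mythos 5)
Your proof is correct and follows essentially the same route as the paper's: take $q=m_N-m_1$, invoke Lemma~\ref{lem:continuity} and Section~\ref{subsec:existence} to justify conditions $\D_q(u_n)$, $\D'_q(u_n)$, and then express $\AA_{q,n}$ as the disjoint union over $i$ of $U_n^{(0)}(\xi_i)\setminus\bigcup_{j>i}f^{-(m_j-m_i)}(U_n^{(0)}(\xi_j))$. The only difference is that the paper simply asserts this decomposition, while you carefully derive it by classifying, for each $(i,k)$, whether the constraint $f^k(x)\notin U_n$ is vacuous (when $k+m_i\notin\{m_1,\dots,m_N\}$, using non-periodicity plus continuity) or collapses to a single preimage condition (when $k=m_j-m_i$ for a unique $j>i$) — a useful elaboration of a step the paper leaves implicit.
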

\begin{proof}
In this case we have that $q=m_N-m_1$ and $\AA_{q,n}^{(0)}=\bigcup_{i=1}^N \AA_{q,n}^{(0)}(\xi_i)$, where
$$
\AA_{q,n}^{(0)}(\xi_i)=U_{n}^{(0)}(\xi_i)-\bigcup_{j=i+1}^N f^{-(m_j-m_i)}(U_{n}^{(0)}(\xi_j)).
$$
By the definition of
$\displaystyle\theta=\lim_{n\to\infty} \frac{\mu(\AA_{q,n}^{(0)})}{\mu(U_n^{(0)})}$, the conclusion follows.
\end{proof}

In order to illustrate the richness of scenarios that the formula for the EI of the previous proposition encompasses we consider below some particular examples. By condition ($R1$), we know that, for $n$ sufficiently large, $U_n=U_n^{(0)}$ is the union of $N$ disjoint balls around each $\xi_i$ that we denote by $U^{(0)}_n(\xi_i)$, for $i=1,\ldots,N$, so that $U_n^{(0)}=\cup_{i=1}^N U^{(0)}_n(\xi_i)$. Depending on the different shapes of $h_i$ in \eqref{def:observable}, the balls $U^{(0)}_n(\xi_i)$ may have different sizes and shapes. Moreover, if $f$ is continuous along the orbit of $\zeta$, then for $1\leq i<j\leq N$ we have $f^{m_j-m_i}(U^{(0)}_n(\xi_i))\cap U^{(0)}_n(\xi_j)\neq\emptyset$ but, depending on the expansion rate and geometric distortion of $f^{m_j-m_i}$, it is not a priori clear if the one of the sets contains the other or not. Hence, in order to simplify, we assume that for $1\leq i<j\leq N$

\begin{enumerate}\label{eq:back-forward-contraction}
\item[{($R3$)}]$
U_{n}^{(0)}(\xi_i) \subset f^{-(m_j-m_i)}\left(U_{n}^{(0)}(\xi_j)\right)
 \mbox{ or } f^{-(m_j-m_i)}\left(U_{n}^{(0)}(\xi_j)\right) \cap \partial U_{n}^{(0)}(\xi_i)=\emptyset,$
\end{enumerate}
where the notation $\partial B$ is used to denote the boundary of the set $B$.

\begin{remark}
\label{rem:boundary}
Note that for all $1\leq i<j\leq N$, we have $f^{-(m_j-m_i)}\left(U_{n}^{(0)}(\xi_j)\right) \cap U_{n}^{(0)}(\xi_i)\neq \emptyset$. Hence, $f^{-(m_j-m_i)}\left(U_{n}^{(0)}(\xi_j)\right) \cap \partial U_{n}^{(0)}(\xi_i)=\emptyset$ means that the connected component of $f^{-(m_j-m_i)}\left(U_{n}^{(0)}(\xi_j)\right)$ that intersects $U_{n}^{(0)}(\xi_i)$ is completely contained in $U_{n}^{(0)}(\xi_i)$.
\end{remark}

In the next result we treat the case where the condition ($R3$) is valid. Before
we assert the corollary we introduce some notation.

Let $I_1$ be the set of indices $i\in  \{1,\dots N-1\}$ such that
\begin{equation}\label{eq:I1} \bigcup_{\ell=i+1}^{N} f^{-(m_l-m_i)}(U_{n}^{(0)}(\xi_\ell)) \cap\partial U_{n}^{(0)}(\xi_i)=\emptyset.\end{equation}
Under ($R3$), for every $i\in I_1$, there exists $j_{i}\in \{i+1,\dots N\}$ such that
\begin{equation*}
\bigcup_{\ell=i+1}^{N} f^{-(m_l-m_i)}(U_{n}^{(0)}(\xi_l))\cap U_{n}^{(0)}(\xi_i)=f^{-(m_{j_{i}}-m_i)}(U_{n}^{(0)}(\xi_{j_{i}}))\cap U_{n}^{(0)}(\xi_i).
\end{equation*}

\begin{corollary}\label{particular1}
If we are under the conditions of Proposition~\ref{prop:extremal-index-nonperiod} and the
condition $(R3)$ is satisfied,
 then the extremal index $\theta$ is given by
$$
\theta=\lim_{n\to\infty} \frac{\sum_{i\in I_1} \left[\mu\left(U_{n}^{(0)}(\xi_i)\right)- \mu\left(f^{-(m_{j_i}-m_i)}(U_{n}^{(0)}(\xi_{j_i}))\cap U_{n}^{(0)}(\xi_i)\right)\right]+\mu\left(U_{n}^{(0)}(\xi_N)\right)}
{\sum_{i=1}^N \mu\left(U_{n}^{(0)}(\xi_i)\right)},
$$
whenever the limit exists.
\end{corollary}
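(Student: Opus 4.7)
The plan is to start from the identity given in Proposition~\ref{prop:extremal-index-nonperiod} and analyse the numerator $\sum_{i=1}^N \mu\bigl(U_{n}^{(0)}(\xi_i)\setminus\bigcup_{j=i+1}^N f^{-(m_j-m_i)}(U_{n}^{(0)}(\xi_j))\bigr)$ term by term, splitting $\{1,\ldots,N\}$ into three disjoint subsets: $\{N\}$, $I_1$, and $\{1,\ldots,N-1\}\setminus I_1$. The case $i=N$ is immediate, since the inner union is empty and the corresponding term contributes exactly the isolated summand $\mu(U_n^{(0)}(\xi_N))$ appearing in the statement.

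For $i\in\{1,\ldots,N-1\}\setminus I_1$ I would show that the term vanishes. By negation of \eqref{eq:I1}, there is some $\ell\in\{i+1,\ldots,N\}$ with $f^{-(m_\ell-m_i)}(U_n^{(0)}(\xi_\ell))\cap\partial U_n^{(0)}(\xi_i)\neq\emptyset$; condition $(R3)$ then forces the first alternative, $U_n^{(0)}(\xi_i)\subset f^{-(m_\ell-m_i)}(U_n^{(0)}(\xi_\ell))$, so $U_n^{(0)}(\xi_i)$ is completely absorbed by the union and the difference is empty.

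For $i\in I_1$ I would invoke the reduction stated in the paragraph preceding the corollary: there is $j_i\in\{i+1,\ldots,N\}$ such that, inside $U_n^{(0)}(\xi_i)$, the full union of preimages collapses to $f^{-(m_{j_i}-m_i)}(U_n^{(0)}(\xi_{j_i}))\cap U_n^{(0)}(\xi_i)$. Since the parts of the union lying outside $U_n^{(0)}(\xi_i)$ are irrelevant to the set difference, the $i$-th term equals $\mu(U_n^{(0)}(\xi_i))-\mu\bigl(f^{-(m_{j_i}-m_i)}(U_n^{(0)}(\xi_{j_i}))\cap U_n^{(0)}(\xi_i)\bigr)$. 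Adding the three contributions reproduces the numerator of the statement, the denominator is unchanged, and passing to the limit finishes the argument. The delicate point is the selection of $j_i$: this relies on condition $(R3)$ together with Remark~\ref{rem:boundary}, which confines each relevant preimage component to the interior of $U_n^{(0)}(\xi_i)$, so that in the essentially one-dimensional (or nested) setting of the paper's examples one may single out a single component containing all the others.
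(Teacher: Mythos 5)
Your proof is correct and matches what the paper intends: the corollary is stated without proof, and your three-way decomposition of the numerator from Proposition~\ref{prop:extremal-index-nonperiod} over $\{N\}$, $I_1$, and $\{1,\dots,N-1\}\setminus I_1$ is precisely the straightforward verification, with $(R3)$ forcing absorption (hence a vanishing term) for $i\notin I_1$ and the collapse to the single preimage indexed by $j_i$ handling $i\in I_1$. You are right to note that the existence of the dominating index $j_i$ is the only non-algebraic ingredient, but it is asserted by the paper in the paragraph immediately preceding the corollary, so invoking it as given is appropriate.
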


Let us now assume we are in the particular case where $\mu$ is absolutely continuous with respect to the Lebesgue measure and  its Radon-Nikodym density is sufficiently regular so that for all $x\in\X$ we have
\begin{equation}
\label{eq:density}
\lim_{\eps\to 0}\frac{\mu(B_\eps(x))}{\l(B_\eps(x))}=\frac{d\mu}{d\l}(x).
\end{equation}
Note that if $f$ is one dimensional smooth map and $\log(Df)$ is H\"older continuous then, as seen in \cite[Section~7.2]{FFT15}, formula \eqref{eq:density} holds.

\begin{corollary}\label{particular1cor} If we are under the hypothesis of Corollary \ref{particular1}  and $\mu$ is absolutely continuous and \eqref{eq:density} holds then
the extremal index $\theta$ is given by
$$
\theta=\lim_{n\to\infty} \frac{\displaystyle\sum_{i\in I_1} \left[\mu\left(U_{n}^{(0)}(\xi_i)\right)- \displaystyle\frac{1}{\left|\det Df^{m_{j_i}-m_i}(\xi_{i})\right|}\mu\left(U_{n}^{(0)}(\xi_{j_i})\right)\right]
+\mu\left(U_{n}^{(0)}(\xi_N)\right)}
{\displaystyle\sum_{i=1}^N \mu\left(U_{n}^{(0)}(\xi_i)\right)}.
$$
\end{corollary}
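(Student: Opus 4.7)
The plan is to start from the formula provided by Corollary~\ref{particular1} and replace the intersection term $\mu(V_n)$, where $V_n:=f^{-(m_{j_i}-m_i)}(U_n^{(0)}(\xi_{j_i}))\cap U_n^{(0)}(\xi_i)$, by an explicit asymptotic involving $|\det Df^{m_{j_i}-m_i}(\xi_i)|$. Under $(R3)$ together with $i\in I_1$, as noted in Remark~\ref{rem:boundary}, $V_n$ equals precisely the connected component of $f^{-(m_{j_i}-m_i)}(U_n^{(0)}(\xi_{j_i}))$ that contains $\xi_i$. Since $f^{m_{j_i}-m_i}(\xi_i)=\xi_{j_i}$ and $\zeta$ is assumed to lie away from the singular set of $f$, the iterate $f^{m_{j_i}-m_i}$ is a local diffeomorphism at $\xi_i$, and for all sufficiently large $n$ the set $V_n$ is the image of $U_n^{(0)}(\xi_{j_i})$ under the inverse branch $g_n$ of $f^{m_{j_i}-m_i}$ through $\xi_i$.

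Next I would apply the standard change-of-variables formula to this inverse branch. Combined with continuity of the Jacobian at $\xi_i$, this yields
\[
\l(V_n)\sim\frac{1}{|\det Df^{m_{j_i}-m_i}(\xi_i)|}\,\l\bigl(U_n^{(0)}(\xi_{j_i})\bigr),\qquad n\to\infty.
\]
Then, invoking hypothesis \eqref{eq:density} at both $\xi_i$ and $\xi_{j_i}$ to convert Lebesgue into $\mu$ via the invariant density $\rho:=d\mu/d\l$, I would deduce
\[
\mu(V_n)\sim\frac{1}{|\det Df^{m_{j_i}-m_i}(\xi_i)|}\,\mu\bigl(U_n^{(0)}(\xi_{j_i})\bigr).
\]
Substituting this asymptotic into the numerator of the formula of Corollary~\ref{particular1} and passing to the limit $n\to\infty$ produces exactly the stated expression for $\theta$.

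The delicate step is the last conversion: a naive change-of-variables computation produces an extra factor $\rho(\xi_i)/\rho(\xi_{j_i})$, and absorbing it into the final expression requires using the $f$-invariance of $\rho$ (through the Perron--Frobenius identity along the orbit of $\zeta$) together with uniform control of the density on the shrinking balls $U_n^{(0)}(\xi_i)$, whose radii $\varepsilon_i(u_n)=h_i^{-1}(u_n)$ may differ substantially across $i$ because of the possibly different types of $h_i$ in \eqref{def:observable}. Once this bookkeeping is in place the rest reduces to a direct substitution into the ratio of Corollary~\ref{particular1}, since the denominator is left untouched and the only terms in the numerator affected are those indexed by $I_1$.
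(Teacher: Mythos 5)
Your approach is the same one the paper takes: express the mixed term $\mu\bigl(f^{-(m_{j_i}-m_i)}(U_n^{(0)}(\xi_{j_i}))\cap U_n^{(0)}(\xi_i)\bigr)$ via the inverse branch of $f^{m_{j_i}-m_i}$ through $\xi_i$, apply the mean value theorem (change of variables) for the Jacobian, and convert between $\mu$ and Lebesgue using \eqref{eq:density}. The paper states this in a single sentence; you spell out the intermediate Lebesgue estimate, which is a more honest record of what is being used. You also correctly flag the genuinely delicate point that the paper's one-line proof glosses over: the $\mu$–to–$\l$ conversions at $\xi_i$ and at $\xi_{j_i}$ produce a residual factor $\rho(\xi_i)/\rho(\xi_{j_i})$, where $\rho=d\mu/d\l$.

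The gap is in your proposed resolution of that delicate point. You claim the factor $\rho(\xi_i)/\rho(\xi_{j_i})$ can be absorbed ``using the $f$-invariance of $\rho$ (through the Perron--Frobenius identity along the orbit of $\zeta$).'' That is not correct. The Perron--Frobenius identity for the $s$-th iterate $s=m_{j_i}-m_i$ reads
\[
\rho(\xi_{j_i})=\sum_{x\in f^{-s}(\xi_{j_i})}\frac{\rho(x)}{|\det Df^{s}(x)|}
=\frac{\rho(\xi_i)}{|\det Df^{s}(\xi_i)|}+(\text{other nonnegative terms}),
\]
so it gives only the inequality $\rho(\xi_i)/\bigl(\rho(\xi_{j_i})\,|\det Df^{s}(\xi_i)|\bigr)\le 1$, with strict inequality as soon as $\xi_{j_i}$ has another preimage of positive density. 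It does not give $\rho(\xi_i)=\rho(\xi_{j_i})$, and hence it does not cancel the residual factor. What your change-of-variables computation actually produces is
\[
\mu(V_n)\sim\frac{\rho(\xi_i)}{\rho(\xi_{j_i})\,\bigl|\det Df^{m_{j_i}-m_i}(\xi_i)\bigr|}\,\mu\bigl(U_n^{(0)}(\xi_{j_i})\bigr),
\]
so the stated formula for $\theta$ holds as written only when $\rho(\xi_i)=\rho(\xi_{j_i})$ for all $i\in I_1$, for example when $\mu$ is Lebesgue (which is the case in every worked example in the paper, so the discrepancy never surfaces there). Since $\zeta$ is non-periodic here, $\xi_i$ and $\xi_{j_i}$ are genuinely distinct points and there is no a priori reason for their densities to coincide. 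Either an extra hypothesis ($\rho$ constant along $\mathcal M$, or $\mu=\l$) must be imposed, or the formula must retain the density ratio. You were right to be suspicious of this step; the cure you proposed, however, does not cure it.
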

\begin{proof}
We only need to notice that, by the mean value theorem and \eqref{eq:density}
$$
\mu\left(f^{-(m_{j_i}-m_i)}(U_{n}^{(0)}(\xi_{j_i}))\cap U_{n}^{(0)}(\xi_i)\right)
\sim \frac{1}{\left|\det Df^{m_{j_i}-m_i}(\xi_{i})\right|}
\mu\left(U_{n}^{(0)}(\xi_{j_i})\right).
$$
\end{proof}

\subsection{Estimating the multiplicity distribution}

By Proposition 2.10 of \cite{FFR15} we have that the multiplicity
distribution of the limiting compound Poisson process for the REPP is given by the formula
\begin{equation}\label{def:mult-distr}
\pi(k)=\lim_{n\to\infty}\frac{\mu\left(\AA_{q,n}^{(k-1)}\right)-\mu\left(\AA_{q,n}^{(k)}\right)}{\mu\left(\AA_{q,n}^{(0)}\right)},
\end{equation}
whenever the limit exists.

Assume that ($R3$) holds.

Let $I_1$ be defined as in \eqref{eq:I1}. For every $i\in \{1,\dots,N\}$, we define $j_{i,0}=i$. Then, for $i\in \{1,\dots,N-k\}$ we inductively define $I_k$ in the following way.

Assuming that $I_k$ is defined as well as $j_{1,k-1}, \ldots, j_{N-k+2,k-1}$, then for every $i=1, \ldots, N-k+1$, we define 
$j_{i,k}$ to the index such that
\begin{equation}
\bigcup_{\ell=i+1}^{N} f^{-(m_{j_{\ell,k-1}}-m_i)}(U_{n}^{(0)}(\xi_{j_{\ell,k-1}}))\cap U_{n}^{(0)}(\xi_i)=f^{-(m_{j_{i,k}}-m_i)}(U_{n}^{(0)}(\xi_{j_{i,k}}))\cap U_{n}^{(0)}(\xi_i),
\end{equation}
when $i\in I_{k}$ and $j_{i,k}=i$, otherwise.

Now, we define $I_{k+1}$ as the set of indices $i\in  \{1,\dots N-k\}$ such that
$$ \bigcup_{\ell=i+1}^{N} f^{-(m_{j_{\ell,k}}-m_i)}(U_{n}^{(0)}(\xi_{j_{\ell,k}}))\cap \partial U_{n}^{(0)}(\xi_i)=\emptyset.$$

\begin{proposition}
\label{prop:multip-R3}
Assume that $f$ satisfies Assumption A. Let $X_0,X_1,...$ be given by \eqref{eq:def-stat-stoch-proc-DS}, where the observable
$\varphi$ has $N$ maximal points $\xi_1,\ldots,\xi_N$ related by \eqref{eq:maxima-bind} and satisfies \eqref{def:observable}, with $\zeta\in \mathcal X$ a non-periodic
point. Assume that conditin ($R3$) holds. The multiplicity distribution of the point process is given by equation \eqref{def:mult-distr} (whenever the limit exists),
where, for $k<N$,
\begin{align*}
&\mu(\AA^{(k)}_{q,n})=\mu\left(f^{-(m_{j_{N-k,k}}-m_{N-k})}(U_{n}^{(0)}(\xi_{j_{N-k,k}}))\cap U_{n}^{(0)}(\xi_{N-k})\right)
\\&+
\sum_{i\in I_{k+1}} \left[\mu\left(f^{-(m_{j_{i,k}}-m_i)}(U_{n}^{(0)}(\xi_{j_{i,k}})) \cap U_{n}^{(0)}(\xi_i)\right)-\mu\left(f^{-(m_{j_{i,k+1}}-m_i)}(U_{n}^{(0)}(\xi_{j_{i,k+1}}))\cap U_{n}^{(0)}(\xi_i)\right)\right]
\end{align*}
 and, for $k\geq N$, $\mu(\AA^{(k)}_{q,n})=0$.

\end{proposition}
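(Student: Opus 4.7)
The starting point is formula \eqref{def:mult-distr}, which is justified in our setting by \cite[Proposition~2.10]{FFR15} once the conditions $\D_q(u_n)$, $\D_q(u_n)^*$ and $\D'_q(u_n)$ are verified. Under Assumption~A these follow from Section~\ref{subsec:existence} combined with Lemma~\ref{lem:continuity} applied to $q=m_N-m_1$. So it suffices to compute $\mu(\AA^{(k)}_{q,n})$. Directly from the defining recursion one has $U^{(k+1)}_n=U^{(k)}_n\setminus \AA^{(k)}_{q,n}$, together with $\AA^{(k)}_{q,n}\subset U^{(k)}_n$, yielding the telescoping identity
$$\mu(\AA^{(k)}_{q,n})=\mu(U^{(k)}_n)-\mu(U^{(k+1)}_n),$$
which reduces the whole proof to an inductive description of $U^{(k)}_n$.

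The main step is the inductive claim that, for $n$ large enough,
$$U^{(k)}_n=\bigsqcup_{i=1}^{N-k} U^{(k)}_n(\xi_i), \qquad U^{(k)}_n(\xi_i)=f^{-(m_{j_{i,k}}-m_i)}(U_n(\xi_{j_{i,k}}))\cap U_n(\xi_i),$$
the disjointness being a consequence of $(R1)$. The base case $k=0$ is immediate from $j_{i,0}=i$. For the inductive step, the argument underlying Lemma~\ref{lem:continuity} (non-periodicity of $\zeta$ plus continuity along its orbit) ensures that, for large $n$, any return of $x\in U_n(\xi_i)$ to $U^{(k)}_n$ within $q$ steps occurs only at a time of the form $j=m_\ell-m_i$ with $i<\ell\leq N-k$, and is equivalent to $f^{m_\ell-m_i}(x)\in U^{(k)}_n(\xi_\ell)$. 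Unfolding the inductive description of $U^{(k)}_n(\xi_\ell)$ recasts this last condition as $x\in f^{-(m_{j_{\ell,k}}-m_i)}(U_n(\xi_{j_{\ell,k}}))$, which is precisely the object governing the definitions of $I_{k+1}$ and $j_{i,k+1}$.

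A case analysis on the position of $i$ then completes the induction. When $i=N-k$ the inductive hypothesis forces $U^{(k)}_n(\xi_\ell)=\emptyset$ for every $\ell>i$, so no return within $q$ steps is possible and $\AA^{(k)}_{q,n}\cap U^{(k)}_n(\xi_{N-k})=U^{(k)}_n(\xi_{N-k})$ contributes in full, giving the isolated first summand of the formula and simultaneously forcing $U^{(k+1)}_n(\xi_{N-k})=\emptyset$. When $i\leq N-k-1$ and $i\in I_{k+1}$, condition $(R3)$ forces each relevant preimage component to lie strictly inside $U_n(\xi_i)$, and the defining identity of $j_{i,k+1}$ collapses the union of these components to the single set $f^{-(m_{j_{i,k+1}}-m_i)}(U_n(\xi_{j_{i,k+1}}))\cap U_n(\xi_i)$; the resulting difference $\mu(U^{(k)}_n(\xi_i))-\mu(U^{(k+1)}_n(\xi_i))$ matches the summand indexed by $i\in I_{k+1}$. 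When $i\leq N-k-1$ and $i\notin I_{k+1}$, condition $(R3)$ instead forces some preimage to contain $U_n(\xi_i)$ entirely, so $\AA^{(k)}_{q,n}\cap U^{(k)}_n(\xi_i)=\emptyset$ and this index contributes nothing to the telescoping difference. Iterating the recursion all the way exhausts every component, giving $U^{(k)}_n=\emptyset$ and hence $\mu(\AA^{(k)}_{q,n})=0$ for every $k\geq N$.

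The main obstacle is bookkeeping: one must show that the identification of $U^{(k)}_n(\xi_i)$ as a \emph{single} pullback of a ball is preserved through the induction, in spite of the recursive definitions of $I_k$ and $j_{i,k}$ that intertwine indices across different levels. Condition $(R3)$ rules out the pathological intersections that would prevent such a collapse, but one must still verify that the pullback components remain nested in the correct way so that the union appearing in the defining relation of $j_{i,k+1}$ genuinely reduces to one term rather than remaining a proper union. This nestedness, propagated through the induction one step at a time, is the technical heart of the argument.
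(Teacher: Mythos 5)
Your proof follows essentially the same inductive computation as the paper: you establish, component-by-component, that $U^{(k)}_n(\xi_i)$ collapses to a single pullback $f^{-(m_{j_{i,k}}-m_i)}(U_n(\xi_{j_{i,k}}))\cap U_n(\xi_i)$ under $(R3)$, with the same case analysis on $i=N-k$, $i\in I_{k+1}$ and $i\notin I_{k+1}$. The only cosmetic difference is that you foreground the telescoping identity $\mu(\AA^{(k)}_{q,n})=\mu(U^{(k)}_n)-\mu(U^{(k+1)}_n)$, whereas the paper computes $\AA^{(k)}_{q,n}(\xi_i)=U^{(k)}_n(\xi_i)\setminus\bigcup_{\ell>i}f^{-(m_\ell-m_i)}(U^{(k)}_n(\xi_\ell))$ directly; these are equivalent reformulations of the same recursion, so this is not a genuinely different route.
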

\begin{proof}
We will prove by induction that, for $i\in I_{k+1}$,
$$\AA_{q,n}^{(k)}(\xi_i)= U_{n}^{(0)}(\xi_i)\cap f^{-(m_{j_{i,k}}-m_i)}(U_{n}^{(0)}(\xi_{j_{i,k}}))-f^{-(m_{j_{i,k+1}}-m_i)}(U_{n}^{(0)}(\xi_{j_{i,k+1}}))$$
and $$U_{n}^{(k+1)}(\xi_i)=f^{-(m_{j_{i,k+1}}-m_i)}(U_{n}^{(0)}(\xi_{j_{i,k+1}}))\cap U_{n}^{(0)}(\xi_i),$$
and, for $i\not\in I_{k+1}$, $\mu\left(\AA_{q,n}^{(k)}(\xi_i)\right)=0$ since $U_{n}^{(k)}(\xi_i)=U_{n}^{(k-1)}(\xi_i)$.

For $i\in I_1$ since $j_{\ell,0}=\ell$, we get
\begin{align*}
\AA_{q,n}^{(0)}(\xi_i)&=U_{n}^{(0)}(\xi_i)-\bigcup_{\ell=i+1}^{N} f^{-(m_\ell-m_i)}(U_{n}^{(0)}(\xi_\ell))
\\&=U_{n}^{(0)}(\xi_i)-\bigcup_{\ell=i+1}^{N} f^{-(m_{j_{\ell.0}}-m_i)}(U_{n}^{(0)}(\xi_{j_{\ell,0}}))
\\&=U_{n}^{(0)}(\xi_i)\cap f^{-(m_{j_{i,0}}-m_i)}(U_{n}^{(0)}(\xi_{j_{i,0}}))-f^{-(m_{j_{i,1}}-m_i)}(U_{n}^{(0)}(\xi_{j_{i,1}})),
\end{align*}
by definition of $j_{i,1}$, and, moreover, we have $U_{n}^{(1)}(\xi_i)=f^{-(m_{j_{i,1}}-m_i)}(U_{n}^{(0)}(\xi_{j_{i,1}}))\cap U_{n}^{(0)}(\xi_i)$.

Note that, for $i=N$, $\displaystyle\bigcup_{\ell=i+1}^{N} f^{-(m_\ell-m_i)}(U_{n}^{(0)}(\xi_l))=\emptyset$ and so
$$\AA_{q,n}^{(0)}(\xi_N)=U_{n}^{(0)}(\xi_N)=f^{-(m_{j_{N,0}}-m_i)}(U_{n}^{(0)}(\xi_{j_{N,0}})).$$

For $i\not\in I_1$, $i\not =N$, $U_{n}^{(0)}(\xi_i)-\bigcup_{\ell=i+1}^{N+i} f^{-(m_\ell-m_i)}(U_{n}^{(0)}(\xi_l))=\emptyset$, so $\mu\left(\AA_{q,n}^{(0)}(\xi_i)\right)=0$ and $\mu\left(U_{n}^{(1)}(\xi_i)\right)=\mu\left(U_{n}^{(0)}(\xi_i)\right)$.

By induction, for $i\in I_{k+1}$, we obtain
\begin{align*}
&\AA_{q,n}^{(k)}(\xi_i)=U_{n}^{(k)}(\xi_i)-\bigcup_{\ell=i+1}^{N} f^{-(m_\ell-m_i)}(U_{n}^{(k)}(\xi_l))
\\&=U_{n}^{(k)}(\xi_i)-\bigcup_{\ell=i+1}^{N} f^{-(m_\ell-m_i)}\left(f^{-(m_{j_{\ell,k}}-m_l)}(U_{n}^{(0)}(\xi_{j_{\ell,k}}))\cap U_{n}^{(0)}(\xi_i)\right),\,\mbox{by induction}
\\&=f^{-(m_{j_{i,k}}-m_i)}(U_{n}^{(0)}(\xi_{j_{i,k}})) \cap U_{n}^{(0)}(\xi_i)-\bigcup_{\ell=i+1}^{N} f^{-(m_{j_{\ell,k}}-m_i)}(U_{n}^{(0)}(\xi_{j_{\ell,k}})), \, \mbox{by induction}
\\&=f^{-(m_{j_{i,k}}-m_i)}(U_{n}^{(0)}(\xi_{j_{i,k}}))\cap U_{n}^{(0)}(\xi_i)-f^{-(m_{j_{i,k+1}}-m_i)}(U_{n}^{(0)}(\xi_{j_{i,k+1}})), \mbox{ by definition of $j_{i,k+1}$}.
\end{align*}
and in particular we also have  $U_{n}^{(k+1)}(\xi_i)=f^{-(m_{j_{i,k+1}}-m_i)}(U_{n}^{(0)}(\xi_{j_{i,k+1}}))\cap U_{n}^{(0)}(\xi_i)$.

For $i=N-k+1$, $\displaystyle\bigcup_{\ell=i+1}^{N} f^{-(m_\ell-m_i)}(U_{n}^{(k)}(\xi_l))=\emptyset$ and so
$$\AA_{q,n}^{(k)}(\xi_{N-k})=f^{-(m_{j_{N-k+1,k}}-m_{N-k})}(U_{n}^{(0)}(\xi_{j_{N-k+1,k}}))\cap U_{n}^{(0)}(\xi_{N-k}).$$

For $i\not\in I_{k+1}$, $i\not =N-k+1$ $\mu\left(\AA_{q,n}^{(k)}(\xi_i)\right)=0$ and
$$U_{n}^{(k+1)}(\xi_i)=f^{-(m_{j_{i,k}}-m_i)}(U_{n}^{(0)}(\xi_{j_{i,k}}))\cap U_{n}^{(0)}(\xi_i).$$
For $k>N+1$, $\mu\left(\AA_{q,n}^{(k)}(\xi_i)\right)=0$.

This concludes the proof.
\end{proof}

\begin{corollary}\label{particular2}Under the assumptions of Proposition~\ref{prop:multip-R3}. Assume that $\mu$ is absolutely continuous with respect to the Lebesgue measure and \eqref{eq:density} holds. Then the multiplicity distribution of the point process is given by formula \eqref{def:mult-distr},
where, for $k<N$,
\begin{align*}
\mu(\AA^{(k)}_{q,n})=\displaystyle\sum_{i\in I_{k+1}} \Big(\alpha_n(m_{j_{i,k}},m_i)-\alpha_n(m_{j_{i,k+1}},m_i)\Big)+\alpha_n(m_{j_{N-k,k}},m_{N-k})
\end{align*}
and
$$\alpha_n(a,b)=\displaystyle\frac{1}{|\det Df^{a-b}(f^b(\zeta))|}\mu \left(U_{n}^{(0)}(f^a(\zeta))\right).$$
\end{corollary}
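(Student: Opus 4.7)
The plan is to substitute into the formula from Proposition~\ref{prop:multip-R3} the same mean value estimate that was used at the end of the proof of Corollary~\ref{particular1cor}. Concretely, for any pair of indices $a>b$ appearing in $\{m_1,\ldots,m_N\}$, condition ($R3$) guarantees that $f^{-(a-b)}(U_n^{(0)}(f^a(\zeta)))\cap U_n^{(0)}(f^b(\zeta))$ is the single connected component of the preimage that meets $U_n^{(0)}(f^b(\zeta))$, and that it is entirely contained inside $U_n^{(0)}(f^b(\zeta))$. Since $f^{a-b}$ is $C^1$ on a neighbourhood of $f^b(\zeta)$ with $f^{a-b}(f^b(\zeta))=f^a(\zeta)$, the restriction of $f^{a-b}$ to this component is a diffeomorphism onto $U_n^{(0)}(f^a(\zeta))$.

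Combining the change of variables formula with the absolute continuity of $\mu$ and the density regularity \eqref{eq:density}, and using that the diameter of the component shrinks to zero as $n\to\infty$ (so both the density of $\mu$ and the Jacobian $|\det Df^{a-b}|$ are effectively constant on the component, equal to their values at $f^b(\zeta)$), one obtains
\begin{equation*}
\mu\!\left(f^{-(a-b)}(U_n^{(0)}(f^a(\zeta)))\cap U_n^{(0)}(f^b(\zeta))\right)\sim\frac{1}{|\det Df^{a-b}(f^b(\zeta))|}\,\mu\!\left(U_n^{(0)}(f^a(\zeta))\right)=\alpha_n(a,b),
\end{equation*}
exactly as in the proof of Corollary~\ref{particular1cor}. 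Applying this asymptotic with $(a,b)=(m_{j_{i,k}},m_i)$ and $(a,b)=(m_{j_{i,k+1}},m_i)$ for each $i\in I_{k+1}$, and with $(a,b)=(m_{j_{N-k,k}},m_{N-k})$ for the boundary term, converts each intersection measure in the expression provided by Proposition~\ref{prop:multip-R3} into the corresponding $\alpha_n(a,b)$. The errors incurred are $o(\mu(U_n))$ and therefore disappear once we divide by $\mu(\AA_{q,n}^{(0)})$ and pass to the limit in \eqref{def:mult-distr}. For $k\geq N$ the conclusion is inherited directly from Proposition~\ref{prop:multip-R3}.

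The only real obstacle is to verify that the diameter of the relevant connected component of $f^{-(a-b)}(U_n^{(0)}(f^a(\zeta)))$ actually tends to zero, since only then are we justified in evaluating both the density and the Jacobian at the single point $f^b(\zeta)$. This however follows directly from the local $C^1$ invertibility of $f^{a-b}$ at $f^b(\zeta)$ together with the fact that $U_n^{(0)}(f^a(\zeta))$ shrinks to $\{f^a(\zeta)\}$; once this is granted, the argument reduces to the routine mean value estimate already spelled out in the proof of Corollary~\ref{particular1cor}.
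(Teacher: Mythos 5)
Your proposal is correct and follows essentially the same route as the paper: substitute the mean-value asymptotic $\mu\bigl(f^{-(a-b)}(U_n^{(0)}(f^a(\zeta)))\cap U_n^{(0)}(f^b(\zeta))\bigr)\sim\alpha_n(a,b)$ from the proof of Corollary~\ref{particular1cor} into each term of the expression for $\mu(\AA^{(k)}_{q,n})$ given by Proposition~\ref{prop:multip-R3}. If anything, you are slightly more careful than the paper, which writes the replacement as an equality; your treatment of it as an asymptotic equivalence with an $o(\mu(U_n))$ error that vanishes after normalising in \eqref{def:mult-distr} is the correct reading.
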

\begin{proof}
Since $\mu$ is absolutely continuous with respect to the Lebesgue measure and \eqref{eq:density} holds, then
$$\mu\left(f^{-(m_{j_{i,k}}-m_i)}(U_{n}^{(0)}(\xi_{j_{i,k}}))\cap U_{n}^{(0)}(\xi_i)\right)=\displaystyle\frac{1}{\left|\det Df^{m_{j_{i,k}}-m_i}(\xi_{i})\right|}\mu \left(U_{n}^{(0)}(\xi_{j_{i,k}})\right)=\alpha_n(m_{j_{i,k}},m_i).$$
Replacing this in the previous proposition we obtain the conclusion
\end{proof}

\begin{corollary}
Assume that $\mu$ is absolutely continuous with respect to the Lebesgue measure and \eqref{eq:density} holds and
$$ f^{-(m_{i+1}-m_i)}\left(U_{n}^{(0)}(\xi_{i+1})\right) \subset U_{n}^{(0)}(\xi_i).$$
Then the multiplicity distribution of the point process is given by formula \eqref{def:mult-distr}
where, for $k<N$,
\begin{align*}
\mu(\AA^{(k)}_{q,n})=\sum_{i\in I_{k+1}} \Big(\alpha_n(m_{i+k},m_i)-\alpha_n(m_{i+k+1},m_i)\Big)+\alpha_n(m_N,m_{N-k})
\end{align*}
and $\alpha_n(a,b)$ is as in Corollary~\ref{particular2}.
\end{corollary}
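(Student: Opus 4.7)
The plan is to specialize Corollary~\ref{particular2} by computing the indices $j_{i,k}$ explicitly under the strengthened nested-containment hypothesis. I first observe that the assumption $f^{-(m_{i+1}-m_i)}(U_{n}^{(0)}(\xi_{i+1})) \subset U_{n}^{(0)}(\xi_i)$ is a particular case of $(R3)$: the preimage lies inside $U_n^{(0)}(\xi_i)$ and therefore does not intersect its boundary. Consequently Corollary~\ref{particular2} applies and it only remains to identify the indices $j_{i,k}$.

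The key geometric observation is that the consecutive-index containment propagates along the orbit. By induction on $j-i$, for every $1\leq i<j\leq N$,
\[
f^{-(m_{j}-m_i)}(U_{n}^{(0)}(\xi_{j})) \subset f^{-(m_{j-1}-m_i)}(U_{n}^{(0)}(\xi_{j-1})) \subset \cdots \subset U_{n}^{(0)}(\xi_i),
\]
obtained by applying $f^{-(m_{\ell}-m_i)}$ to the hypothesis $f^{-(m_{\ell+1}-m_\ell)}(U_{n}^{(0)}(\xi_{\ell+1})) \subset U_{n}^{(0)}(\xi_\ell)$ and using $f^{-a}\circ f^{-b}=f^{-(a+b)}$. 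Thus, for each $i$, the preimages $\{f^{-(m_\ell-m_i)}(U_{n}^{(0)}(\xi_\ell))\}_{\ell=i+1}^N$ form a decreasing chain of sets nested inside $U_n^{(0)}(\xi_i)$, so their union collapses to the largest element.

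With this in hand, I compute $j_{i,k}$ by induction on $k$. For the base case $k=1$, the union $\bigcup_{\ell=i+1}^N f^{-(m_\ell-m_i)}(U_{n}^{(0)}(\xi_\ell))$ equals $f^{-(m_{i+1}-m_i)}(U_{n}^{(0)}(\xi_{i+1}))$, which gives $j_{i,1}=i+1$, and the boundary condition defining $I_1$ holds trivially since this set sits in the interior of $U_n^{(0)}(\xi_i)$. For the inductive step, assuming $j_{\ell,k}=\ell+k$ for all admissible $\ell$, the same collapsing argument applied to $\bigcup_\ell f^{-(m_{\ell+k}-m_i)}(U_n^{(0)}(\xi_{\ell+k}))$ yields $j_{i,k+1}=i+k+1$, and all valid indices lie in $I_{k+1}$.

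Finally, plugging $j_{i,k}=i+k$ and $j_{i,k+1}=i+k+1$ into the expression from Corollary~\ref{particular2} converts each summand $\alpha_n(m_{j_{i,k}},m_i)-\alpha_n(m_{j_{i,k+1}},m_i)$ into $\alpha_n(m_{i+k},m_i)-\alpha_n(m_{i+k+1},m_i)$, while the separate boundary contribution $\alpha_n(m_{j_{N-k,k}},m_{N-k})$ becomes $\alpha_n(m_N,m_{N-k})$, producing the claimed formula. The most delicate aspect will be handling the boundary indices where $i+k$ approaches $N$: the separate term in Corollary~\ref{particular2} is precisely what absorbs the case in which the formal index $i+k+1$ would exceed $N$, and I expect the main care to lie in carefully matching the admissible ranges of the various $j_{i,k}$ rather than in any substantive estimation.
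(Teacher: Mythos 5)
Your proof is correct and takes essentially the same route as the paper, which disposes of the corollary with the single observation that under the stated nested-containment hypothesis one has $j_{i,k}=i+k$, and then invokes Corollary~\ref{particular2}. Your write-up simply supplies the geometric justification (the decreasing chain of preimages, obtained by pulling back the hypothesis by $f^{-(m_{j-1}-m_i)}$, so that each union collapses to its largest term and every admissible index lies in $I_{k+1}$) that the paper leaves implicit.
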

\begin{proof}
Using the notation of Proposition~\ref{prop:multip-R3}, note that in this case $j_{i,k}=i+k$.
\end{proof}

\subsection{An example}

Let $(f,\mathbb{S}^1,Leb,\mathcal{B})$ be the dynamical system where
 $f:\mathbb{S}^1\to \mathbb{S}^1$ is the map given by $f(x)=2x \mod 1$. Take
$\zeta= \frac{\sqrt{2}}{16}$ and an observable $\varphi: \mathbb{S}^1\to \mathbb R\cup \{\infty\}$
such that
$$
\varphi(x)=\left\{
             \begin{array}{lll}
               -\log|x-\zeta|, & \hbox{ if }x\in B_\epsilon(\zeta), \\
               |x-f(\zeta)|^{\frac{-1}{2}}, & \hbox{ if }x\in B_\epsilon(f(\zeta)) \\
               |x-f^3(\zeta)|^{\frac{-1}{2}}, & \hbox{ if }x\in B_\epsilon(f^3(\zeta))\\
               0, & \hbox{ otherwie}.
             \end{array}
           \right.
$$
where we choose $\epsilon>0$, such that $ B_\epsilon(f^i(\zeta))\cap B_\epsilon(f^j(\zeta))=\emptyset$
for $ i,j\in\{0,1,3\} $ and $i\not=j$. In this case
we have that $u_F=\sup_{x\in  \mathbb{S}^1} \varphi(x)=\infty$ and it occurs at $\zeta, f(\zeta)$ and $f^3(\zeta)$.

Given $\tau>0$ and a sequence $(u_n)_{n\in\mathbb N}$ satisfying \eqref{eq:un} we have that
$$
U_n=(\zeta-e^{-u_n},\zeta+e^{-u_n})\cup(f(\zeta)-u_n^{-2},f(\zeta)+u_n^{-2})
\cup(f^3(\zeta)-u_n^{-2},f^3(\zeta)+u_n^{-2}).
$$
As before we let $U_n(\zeta)=(\zeta-e^{-u_n},\zeta+e^{-u_n})$, $U_n(f(\zeta))=(f(\zeta)-u_n^{-2},f(\zeta)+u_n^{-2})$ and $U_n(f^3(\zeta))=(f^3(\zeta)-u_n^{-2},f^3(\zeta)+u_n^{-2})$. We notice that for $n$ sufficiently large we have that
$$
(\zeta-e^{-u_n},\zeta+e^{-u_n})\varsubsetneqq f^{-1}\left((f(\zeta)-u_n^{-2},f(\zeta)+u_n^{-2})\right),
$$
and
$$
f^{-2}\left((f^3(\zeta)-u_n^{-2},f^3(\zeta)+u_n^{-2})\right)\cap U_n(f(\zeta))\varsubsetneqq \left(f(\zeta)-u_n^{-2},f(\zeta)+u_n^{-2}\right).
$$
Noting that $m_1=0,m_2=1, m_3=3$, then $I_1=\{2\}$ and $j_2=3$. So, by Corollary \ref{particular2} we have that

\begin{align*}
\theta&=\lim_{n\to\infty} \frac{\mu\left(U_{n}^{(0)}(f^{m_2}(\zeta))\right)- \mu\left(f^{-(m_3-m_2)}(U_{n}^{(0)}(f^{m_3}(\zeta)))\right)+\mu\left(U_{n}^{(0)}(f^{m_3}(\zeta))\right)}
{\sum_{i=1}^3 \mu\left(U_{n}^{(0)}(\xi_i)\right)}
\\&=\lim_{n\to\infty}\frac{u_n^{-2}-\displaystyle\frac{1}{Df^{2}(f^3(\zeta))}u_n^{-2}+u_n^{-2}}{e^{-u_n}+u_n^{-2}+u_n^{-2}}
=\frac{2-\frac{1}{4}}{2}=\frac{7}{8}.
\end{align*}

To compute the multiplicity distribution we need to determine the sets $I_k$, for all $k\in \mathbb{N}$. It is easy to see that, in this case,
$I_1=\{2\}$ and $I_k=\emptyset$, for all $k\geq 2$. We now need to determine the value of $j_{i,k}$, for all $k\in \mathbb{N}$ and all $i\in I_k$. By definition $j_{i,0}=i$, for all $i\in\mathbb{N}$. In addition, we can see that $j_{2,1}=3$. Since $m_1=0,m_2=1, m_3=3$, by Corollary \ref{particular2}, we have
\begin{align*}
\mu(\AA^{(0)}_{q,n})&=(\alpha(m_{j_{2,0},2})-\alpha(m_{j_{2,1},2})+\alpha_n(m_{j_{3,0}},m_3))2u_n^{-2}
\\&=(\alpha(m_2,m_2)-\alpha(m_3,m_2)+\alpha_n(m_3,m_3))2u_n^{-2}
\\&=2\left(u_n^{-2} -\left(\frac{1}{2}\right)^2u_n^{-2}+u_n^{-2}\right)=\frac{14}{4}u_n^{-2},
\\ \mu(\AA^{(1)}_{q,n})&=\alpha(m_{j_{2,1}},m_2)=\alpha(m_3,m_2)=\left(\frac{1}{2}\right)^22u_n^{-2},
\\ \mu(\AA^{(2)}_{q,n})&=\alpha(m_{j_{1,2}},m_1)=\alpha(m_1,m_1)=2e^{-u_n},
\\ \mu(\AA^{(k)}_{q,n})&=0 \text{ for }k\geq 3.
\end{align*}
Therefore,
\begin{align*}
\pi(1)&=\lim_{n\to\infty} \frac{\mu(\AA^{(0)}_{q,n})-\mu(\AA^{(1)}_{q,n})}{\mu(\AA^{(0)}_{q,n})}
=\lim_{n\to\infty}\frac{\displaystyle u_n^{-2}-\left(\frac{1}{2}\right)^2u_n^{-2}+u_n^{-2}-\left(\frac{1}{2}\right)^2u_n^{-2}}
{\displaystyle u_n^{-2} -\left(\frac{1}{2}\right)^2u_n^{-2}+u_n^{-2}}=\frac{6}{7},
\\\pi(2)&=\lim_{n\to\infty} \frac{\mu(\AA^{(1)}_{q,n})-\mu(\AA^{(2)}_{q,n})}{\mu(\AA^{(0)}_{q,n})}
=\lim_{n\to\infty}\frac{\displaystyle \left(\frac{1}{2}\right)^2u_n^{-2}-e^{-u_n}}
{\displaystyle u_n^{-2}-\left(\frac{1}{2}\right)^2u_n^{-2}+u_n^{-2}}=\frac{1}{7},
\\ \pi(3)&=\lim_{n\to\infty} \frac{\mu(\AA^{(2)}_{q,n})-\mu(\AA^{(3)}_{q,n})}{\mu(\AA^{(0)}_{q,n})}
=\lim_{n\to\infty}\frac{\displaystyle e^{-u_n}}
{\displaystyle u_n^{-2}-\left(\frac{1}{2}\right)^2u_n^{-2}+u_n^{-2}}=0,
\\ \pi(k)&=0 \text{ for } k\geq 4.
\end{align*}

\subsection{Non correlated maxima along non periodic orbits}
\label{subsec:non-correlated-non-periodic}

Although the main purpose of the paper is to study the effect of multiple maxima when they are bound by belonging to the same orbit, we note that if we suppose that $\varphi$ achieves a global maximum value at the points $\xi_1, \ldots, \xi_n$ but these maximal points have no intersecting orbits then one can also obtain a limiting EVL, which in the case of the maximal points being non-periodic, has no clustering and hence the EI equals 1.

Let $\mathcal O(x)$ denote the sequence of points that form the orbit of the point $x$, \ie $\mathcal O(x)=\{x, f(x), f^2(x), \ldots\}$ .

\begin{proposition}
Assume that $f$ satisfies Assumption A. Let $X_0,X_1,...$ be given by \eqref{eq:def-stat-stoch-proc-DS}, where the observable
$\varphi$ has $N$ maximal points $\xi_1,\ldots,\xi_N$. Assume that all the maximal points are non-periodic, that $f$ is continuous on $\cup_{i=1}^N\mathcal O(\xi_i)$ and that
\begin{equation}
\label{eq:non-intersection}
\bigcap_{i=1}^N \mathcal O(\xi_i)=\emptyset.
\end{equation}
Then $X_0,X_1,...$ satisfies conditions $\D_0(u_n)$ and $\D'_0(u_n)$ and consequently for $(u_n)_{n\in\N}$ satisfying \eqref{eq:un} then $\lim_{n\to\infty}\mu(M_n\leq u_n)=\e^{-\tau}$.
\end{proposition}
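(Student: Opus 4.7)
The plan is to invoke \cite[Corollary~2.4]{FFT15} with $q=0$: since $\AA_{0,n}=U_n$ by definition, one has $\theta_n=\mu(U_n)/\mu(U_n)=1$, so no clustering occurs and the limit is the standard exponential $\mathrm e^{-\tau}$. It therefore suffices to verify the two hypotheses $\D_0(u_n)$ and $\D'_0(u_n)$.

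The first step, and the only step that genuinely uses the non-intersection hypothesis, is to prove the analogue of Lemma~\ref{lem:continuity}: $R_n:=R(U_n)\to\infty$ as $n\to\infty$. Fix an arbitrary $L\in\N$ and set
$$\varepsilon:=\min\bigl\{\dist(f^j(\xi_i),\xi_k):1\le j\le L,\ 1\le i,k\le N\bigr\}.$$
This minimum is strictly positive: if $i=k$ then $f^j(\xi_i)=\xi_i$ would force $\xi_i$ to be periodic, contrary to assumption; if $i\ne k$ then $f^j(\xi_i)=\xi_k$ would give $\xi_k\in\mathcal O(\xi_i)$, violating \eqref{eq:non-intersection} interpreted, in accord with the verbal statement in the text, as pairwise disjointness of the orbits $\mathcal O(\xi_i)$. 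With $\varepsilon>0$ in hand, continuity of $f$ on $\bigcup_{i=1}^N\mathcal O(\xi_i)$ yields, for each $1\le j\le L$ and $1\le i\le N$, a radius $\delta_{i,j}>0$ with $f^j(B_{\delta_{i,j}}(\xi_i))\subset B_{\varepsilon/2}(f^j(\xi_i))$. Choosing $n$ large enough that each component $B_{\varepsilon_i(u_n)}(\xi_i)$ of $U_n$ lies in $\bigcap_{j=1}^L B_{\delta_{i,j}}(\xi_i)\cap B_{\varepsilon/2}(\xi_i)$, one gets $f^j(U_n)\subset\bigcup_i B_{\varepsilon/2}(f^j(\xi_i))$ for every $1\le j\le L$, and this union is disjoint from $U_n$ by the very definition of $\varepsilon$. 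Hence $R_n>L$; since $L$ was arbitrary, $R_n\to\infty$.

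The remaining two conditions are then dispatched by the generic recipe of Section~\ref{subsec:existence} specialised to $q=0$. For $\D_0(u_n)$, take $\phi=\I_{U_n}$ and $\psi=\I_{\mathscr W_{0,\ell}(U_n)}$. Assumption~A ensures $\|\I_{U_n}\|_{\mathcal C_1}\le C$ and $\|\psi\|_1\le 1$, while $\mathscr W_{t,\ell}(U_n)=f^{-t}(\mathscr W_{0,\ell}(U_n))$, so decay of correlations bounds the left-hand side of \eqref{eq:D1} by $C\rho_t$; the summability of $(\rho_n)$ then furnishes a sequence $t_n=o(n)$ with $n\rho_{t_n}\to 0$. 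For $\D'_0(u_n)$, the chain of inequalities shown in Section~\ref{subsec:existence} applies verbatim with $\AA_{q,n}$ replaced by $U_n$: the terms with $j<R_n$ vanish by definition, and the remaining contribution is bounded by
$$\frac{(n\mu(U_n))^2}{k_n}+C\,n\mu(U_n)\sum_{j=R_n}^{\infty}\rho_j\longrightarrow 0,$$
using $n\mu(U_n)\to\tau$, $k_n\to\infty$, $R_n\to\infty$ and $\sum\rho_j<\infty$.

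The hard part, such as it is, is concentrated in the first step: one must extract the strict positivity of $\varepsilon$ from the combination of non-periodicity and non-intersection of the orbits. Once $R_n\to\infty$ is secured, conditions $\D_0(u_n)$ and $\D'_0(u_n)$ are direct transcriptions of the correlated case already treated.
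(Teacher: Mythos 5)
Your proof is correct and follows exactly the route the paper indicates: take $q=0$ so that $\AA_{0,n}=U_n$ and $\theta_n\equiv1$, reduce $\D_0(u_n)$ and $\D'_0(u_n)$ to the generic computations of Section~\ref{subsec:existence}, and supply the one missing ingredient, $R(U_n)\to\infty$, via the continuity argument the paper says is ``very similar to the proof of Lemma~\ref{lem:continuity}.'' Your detailed execution of that continuity argument is what the paper leaves implicit, and it is right.

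One remark worth keeping: you correctly note that for $N>2$ the formal hypothesis $\bigcap_{i=1}^N\mathcal O(\xi_i)=\emptyset$ as written is strictly weaker than what the argument actually uses, namely that $\xi_k\notin\mathcal O(\xi_i)$ for all $i\neq k$ (equivalently, that $\varepsilon>0$ in your construction). Indeed with three or more points one could have $\xi_2\in\mathcal O(\xi_1)$ while the triple intersection is still empty, in which case $R(U_n)$ stays bounded and clustering occurs as in Section~\ref{sec:nonperiodic}. Reading \eqref{eq:non-intersection} as pairwise disjointness, in line with the surrounding prose, is the correct resolution of this imprecision in the paper, and your proof makes that reading explicit where the paper only gestures at it.
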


\begin{proof}
In this case we take $q=0$, meaning that $\AA_{q,n}=U_n$. Conditions $\D_0(u_n)$ and $\D'_0(u_n)$ follow from the computations in Section~\ref{subsec:existence} as long as one check that $\lim_{n\to\infty}R(U_n)=+\infty$. This last statement follows from a continuity argument very similar to the proof of Lemma~\ref{lem:continuity}.
\end{proof}

\begin{remark}
\label{rem:non-correlated}
This is in agreement with the result obtained in \cite{HNT12} for multiple uncorrelated maxima chosen as independent typical points of the invariant measure of non-uniformly hyperbolic systems admitting a Young tower with summable tails. Here, if the system is continuous and satisfies Assumption A (which is strictly contained in the class of systems considered in \cite{HNT12}), the statement can be reinforced since it applies to all non-periodic points chosen for maxima as long as they have non-intersecting orbits.
\end{remark}

\section{Multiple maxima lying on periodic orbit}
\label{sec:periodic}

In this section we consider the case where $\zeta$ is a
repelling periodic point of prime period $p$, meaning that there exists $p\in\N$ such that $f^p(\zeta)=\zeta$, being that $p$ is the smallest integer for which that happens, and the derivative $Df^p(\zeta)$ is defined and all its eigenvalues are strictly larger than 1. Note that in this case, if for example $\mu$ is absolutely continuous with respect to Lebesgue measure and \eqref{eq:density} holds then condition ($R_2$) is satisfied. From \cite{FFT12} we know that if the global maximum of $\varphi$ is attained only at $\zeta$ then we expect the existence of an EI equals to
$$
\theta=1-\frac1{|\det Df^p(\zeta)|},
$$
in the case $\mu$ is absolutely continuous with respect to Lebesgue and \eqref{eq:density} holds.
Moreover, by \cite{FFT13}, we know that the REPP converges to a compound Poisson process with a geometric multiplicity distribution given by
$$
\pi(\kappa)=\theta (1-\theta)^{\kappa-1},
$$
for all $\kappa\geq 1$.

In this section, our goal is to study the effect of having multiple maxima along the orbit  of the periodic point $\zeta$. We will see
the EI is affected (it decreases) and asymptotics of the \emph{REPP} is also affected.

We assume without loss of generality that $\varphi$ achieves a maximum at the points $\xi_1, \ldots, \xi_N$, where each $\xi_i$ is given by equation \eqref{eq:maxima-bind}, where $0=m_1<m_2<\ldots<m_N\leq p-1$, so that, in particular $\xi_1=\zeta$.

We begin by choosing a value of $q$ for which condition $\D'(u_n)$ can be checked. One easily anticipates that a suitable choice is taking $q=p$. From the computations in Section~\ref{subsec:existence} we need to verify that $R(\AA_{q,n})\to\infty$, as $n\to\infty$ for such choice of $q$.

\begin{lemma}
Assume that $\zeta $ is a repelling periodic point of prime period $p$. Let $\xi_1,\ldots,\xi_N$ be as in \eqref{eq:maxima-bind}, where $0=m_1<m_2<\ldots<m_N\leq p-1$. Let $q=p$, then $\lim_{n\to\infty}R(\AA_{q,n})=\infty$.

\end{lemma}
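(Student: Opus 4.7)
The argument has the same skeleton as Lemma~\ref{lem:continuity}: for any fixed $L\in\N$, I show that $\AA_{p,n}\cap f^{-j}(\AA_{p,n})=\emptyset$ for every $j\in\{1,\ldots,L\}$ and all $n$ sufficiently large. The range $j=1,\ldots,p$ is immediate from the very definition of $\AA_{p,n}$, since $f^j(\AA_{p,n})\subset U_n^c\subset\AA_{p,n}^c$. What is new relative to the non-periodic setting is that continuity of $f$ along the (finite) orbit of $\zeta$ no longer separates iterates, so I must exploit that $\zeta$, and therefore each $\xi_i$, is a \emph{repelling} periodic point.

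For $j\in\{p+1,\ldots,L\}$ suppose for contradiction there exist $n_k\to\infty$ and $x_k\in\AA_{p,n_k}\cap f^{-j}(\AA_{p,n_k})$. Since the balls constituting $U_{n_k}$ shrink to $\{\xi_1,\ldots,\xi_N\}$ as $n\to\infty$, pigeonhole and passing to subsequences give $x_k\to\xi_i$ and $f^j(x_k)\to\xi_{i'}$ for some $i,i'$. Continuity then forces $f^j(\xi_i)=\xi_{i'}$ and, via $\xi_i=f^{m_i}(\zeta)$ and $f^p(\zeta)=\zeta$, the congruence $j\equiv m_{i'}-m_i\pmod p$. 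Write $j=lp+s$ with $s=(m_{i'}-m_i)\bmod p$; the constraint $j\geq p+1$ forces $l\geq 1$ (and $l\geq 2$ when $s=0$).

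By the chain rule $Df^p(\xi_i)$ is conjugate to $Df^p(\zeta)$, so $\xi_i$ is itself repelling of period $p$; let $\lambda_i>1$ denote the minimal expansion factor of $f^p$ at $\xi_i$. The contradiction comes from squeezing $|x_k-\xi_i|$ between two incompatible bounds. \emph{Lower bound:} the requirement $x_k\in\AA_{p,n_k}$ forces $f^{k_0}(x_k)\notin B_{\varepsilon_*}(\xi_*)$ for a suitable $k_0\in\{s,p\}$ and $\xi_*\in\{\xi_{i'},\xi_i\}$; the Lipschitz bound on $f^{k_0}$ at $\xi_i$ then gives $|x_k-\xi_i|\gtrsim\varepsilon_*/\|Df^{k_0}(\xi_i)\|$. \emph{Upper bound:} decomposing $f^j=f^s\circ f^{lp}$, the $l$-fold expansion of $f^p$ near $\xi_i$ together with the local inverse Lipschitz constant of $f^s$ at $\xi_i$ converts $f^j(x_k)\in B_{\varepsilon_{i'}}(\xi_{i'})$ into $|x_k-\xi_i|\lesssim\varepsilon_*/\lambda_i^l$. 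Combining the two yields $\lambda_i^l\leq 1+o(1)$ as $n\to\infty$, which contradicts $\lambda_i>1$ and $l\geq 1$ once $n$ is large enough that the linearization errors are negligible.

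The delicate step, and main obstacle, is the last squeeze in dimensions above one: $\|Df^{k_0}(\xi_i)\|$ and the inverse Lipschitz constant of $f^{k_0}$ at $\xi_i$ may differ by the condition number of $Df^{k_0}(\xi_i)$. In the one-dimensional examples of interest (Rychlik maps, intermittent maps, and the $2x\bmod 1$ model discussed earlier) these two constants coincide up to $o(1)$ and the pinching closes immediately; in higher dimensions one has to refine the estimates by working along the eigendirections of $Df^p(\xi_i)$ so that the expansion along a full $f^p$-cycle dominates the anisotropy introduced by the intermediate map $f^s$.
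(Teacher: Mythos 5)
Your proof is correct and rests on the same mechanism as the paper's: the repulsion of $f^p$ at the periodic orbit forces iterates to leave the shrinking target set. The execution, however, differs in a way worth noting. The paper passes to $g=f^p$, invokes the Hartman--Grobman theorem at $\xi_i$, and bounds the number of $g$-iterates that can stay inside the linearization domain $B_r(\xi_i)$ while still meeting $\AA_{q,n}$; notably it only treats the iterates $g^\kappa(x)=f^{\kappa p}(x)$, leaving the case $p\nmid j$ implicit. You instead reduce explicitly to the congruence $j\equiv m_{i'}-m_i \pmod p$ by a pigeonhole/subsequence argument and derive the contradiction by squeezing $|x_k-\xi_i|$ between a lower bound (from the forced exclusion $f^s(x_k)\notin U_{n_k}(\xi_{i'})$, built into the definition of $\AA_{p,n_k}$) and an upper bound (from the expansion of $g^l$ together with $f^j(x_k)\in U_{n_k}(\xi_{i'})$). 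This is more elementary (no Hartman--Grobman) and is more explicit about the non-multiple-of-$p$ iterates; you are also right to flag the anisotropy obstruction in dimension $\geq2$, a genuine issue which the paper's argument does not really circumvent either, since Hartman--Grobman yields only a topological conjugacy and the linear diameter estimate for $(Dg(\zeta))^\kappa$ does not transfer verbatim to $g^\kappa$. One tightening that removes the condition number of $Df^s(\xi_i)$ from your final inequality: since $g=f^p$ commutes with $f^s$, set $y_k:=f^s(x_k)$, which by definition of $\AA_{p,n_k}$ lies \emph{outside} $U_{n_k}(\xi_{i'})$; then $f^j(x_k)=g^l(y_k)$, and the assumed inclusion $f^j(x_k)\in U_{n_k}(\xi_{i'})$ contradicts the repulsion of $g$ at $\xi_{i'}$ directly, with no reference to $Df^s$. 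In one dimension this closes the argument with no caveat; in higher dimensions it isolates the remaining difficulty to the gap between $\lambda_{i'}^{\,l}$ and $\sigma_{\min}\bigl(Dg^l(\xi_{i'})\bigr)$ for small $l$.
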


\begin{proof}
Let $g:=f^p$, and so every $\xi_i $ is a fixed point of $g$.
As $\zeta$ is a repelling periodic point, we have that there is a neighbourhood of $\xi_i$ that we denote by
$V(\xi_i)$ where $g|_{V(\xi_i)}:V(\xi_i)\to g(V(\xi_i))\subset\mathcal X$ is a diffeomorphim.
By the Hartman-Grobman theorem there is a neighbourhood $W(0)$ of $0\in T_{\zeta}\mathcal X$
and a homeomorphism $h:W(0)\to V(\zeta)$ such that $h\circ Dg(\zeta)=g\circ h$.

 As $W(0)$ is an open set, there is $r>0$ so that the ball of radius $r$,
 denoted by $B_r(0)$ is contained in $W(0)$. Let $x\in B_r(\xi_i)$ and let $\epsilon=\dist(x,\xi_i)$.
 By hypothesis, the point $\xi_i $ is repelling periodic and it implies
  that there exists $\lambda>1$ such that the diameter of  $(Dg(\zeta))^\kappa(B_{\epsilon}(\xi_i))$ is
at least $\lambda^\kappa\epsilon$.  So, in order for $\lambda ^\kappa>r$, we must have that $\kappa>\frac{\log r-\log\epsilon}{\log \lambda}$.
Hence, $Dg(\zeta))^\kappa(B_{\epsilon}(\xi_i))\subset B_r(\xi_i)$,
 for $\kappa\leq\frac{\log r-\log\epsilon}{\log \lambda}$.

Now, for each $j\in\{1,...,N\}$, let $V(\xi_j)$ be an open neighbourhood of
$\xi_j$  where $f|_{V(\xi_j)}:V(\xi_j)\to V(f^{m_{j+1}}(\zeta))$
is a diffeomorphism and the Hartman-Grobman theorem holds.
Let $n\in \N$ such that
$U_n(\xi_j)\subset V(\xi_j)$.
Given $x\in \AA_{q,n}$, by the definition of $\AA_{q,n}$ we know that $g^\kappa(x)\notin \AA_{q,n}$ while $g^\kappa(x)\in B_r(\xi_i)$. Let $\epsilon_n$ be the diameter of $\AA_{q,n}$. Then while
$\kappa\leq\frac{\log r-\log \epsilon_n}{\log \lambda}$ we have that $g^\kappa(x)\notin \AA_{q,n}$. Since $\frac{\log r-\log \epsilon_n}{\log \lambda}\to\infty$, as $n\to\infty$, then it follows that $\lim_{n\to\infty}R(\AA_{q,n})=\infty$.
\end{proof}

\subsection{The extremal index}
In what follows we need some notation, which we introduce now:
We define $m_{i+N}:=m_i+p$, so that $f^{m_{i+N}}(\zeta)=f^{m_{i}}(\zeta)$.

\begin{proposition}\label{prop:extremal-index-period}
Let $X_0,X_1,...$ be given by \eqref{eq:def-stat-stoch-proc-DS}, where the observable
$\varphi$ is given by \eqref{def:observable}, with $\zeta\in \mathcal X$ a repelling periodic
point. In this case
the extremal index $\theta$ is given by
$$
\theta=\lim_{n\to\infty} \frac{\displaystyle\sum_{i=1}^N \mu\left(U_{n}^{(0)}(\xi_i)-\displaystyle\bigcup_{j=i+1}^{N+i} f^{-(m_j-m_i)}(U_{n}^{(0)}(\xi_j))\right)}
{\displaystyle\sum_{i=1}^N \mu\left(U_{n}^{(0)}(\xi_i)\right)},
$$
whenever the limit exists.
\end{proposition}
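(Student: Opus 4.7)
The plan is to parallel the proof of Proposition~\ref{prop:extremal-index-nonperiod}, adapted to the periodic setting with $q = p$ and with an additional wrap-around effect caused by the period.

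First I would invoke the preceding lemma to take $q = p$: this guarantees $R(\AA_{q,n}) \to \infty$ as $n \to \infty$ and, together with Assumption~A and the computations in Section~\ref{subsec:existence}, ensures that both conditions $\D_q(u_n)$ and $\D'_q(u_n)$ hold. Hence \cite[Corollary~2.4]{FFT15} reduces the existence of the EVL to showing that the limit
$$\theta = \lim_{n\to\infty}\theta_n = \lim_{n\to\infty}\frac{\mu(\AA_{q,n}^{(0)})}{\mu(U_n^{(0)})}$$
exists, so it suffices to compute this ratio explicitly.

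The main step, essentially a matter of careful bookkeeping rather than a serious difficulty, is to decompose $\AA_{q,n}^{(0)}$ according to the connected components of $U_n^{(0)}$. By condition $(R1)$, for $n$ large $U_n^{(0)} = \bigcup_{i=1}^N U_n^{(0)}(\xi_i)$ is a disjoint union of balls, so $\AA_{q,n}^{(0)}$ splits as the disjoint union of the sets $\AA_{q,n}^{(0)}(\xi_i) := \AA_{q,n}^{(0)} \cap U_n^{(0)}(\xi_i)$. A point $x \in U_n^{(0)}(\xi_i)$ lies in $\AA_{q,n}^{(0)}(\xi_i)$ precisely when $f^k(x) \notin U_n^{(0)}$ for all $k = 1,\ldots,p$. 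Since $f^p(\zeta) = \zeta$, the orbit of $\xi_i$ over the next $p$ iterates visits a maximal point at exactly the times $k = m_j - m_i$ for $j = i+1,\ldots,N+i$, under the extension $m_{j+N} := m_j + p$ introduced in the statement (which identifies $\xi_{j+N}$ with $\xi_j$ and hence $U_n^{(0)}(\xi_{j+N})$ with $U_n^{(0)}(\xi_j)$). Shadowing this orbit by $x$ close to $\xi_i$, the possible exceedances in the window $k = 1,\ldots,p$ correspond exactly to $f^{m_j-m_i}(x) \in U_n^{(0)}(\xi_j)$ for some such $j$, which gives
$$\AA_{q,n}^{(0)}(\xi_i) = U_n^{(0)}(\xi_i) - \bigcup_{j=i+1}^{N+i} f^{-(m_j-m_i)}\bigl(U_n^{(0)}(\xi_j)\bigr).$$

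Summing these identities over $i$, dividing by $\mu(U_n^{(0)}) = \sum_{i=1}^N \mu(U_n^{(0)}(\xi_i))$, and passing to the limit (assumed to exist) yields the stated formula. The main subtlety compared to Proposition~\ref{prop:extremal-index-nonperiod} is the wrap-around effect: the union now runs up to $N+i$ rather than $N$, reflecting the fact that starting from $\xi_i$ the orbit returns through $\zeta, \xi_2, \ldots, \xi_i$ before closing up at time $p$. Making the shadowing step fully rigorous, and in particular showing that for large $n$ no spurious exceedance of $x \in U_n^{(0)}(\xi_i)$ occurs at times $k$ distinct from the $m_j - m_i$, is the only point requiring some care; it follows from $(R1)$, continuity of $f$ along $\mathcal O(\zeta)$, and the fact that $\zeta$ has prime period $p$ (so the points $\xi_1,\ldots,\xi_N$ are distinct and separated from the other points $f^k(\zeta)$).
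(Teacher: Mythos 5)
Your proof is correct and follows the same route as the paper's: take $q=p$, decompose $\AA_{p,n}^{(0)}$ over the connected components $U_n^{(0)}(\xi_i)$, and identify $\AA_{p,n}^{(0)}(\xi_i)=U_n^{(0)}(\xi_i)-\bigcup_{j=i+1}^{N+i}f^{-(m_j-m_i)}(U_n^{(0)}(\xi_j))$ via the wrap-around convention $m_{j+N}=m_j+p$. The paper states this decomposition as a one-line assertion without justification; you supply the missing shadowing/continuity argument showing that within the window $k=1,\dots,p$ the only preimages $f^{-k}(U_n^{(0)})$ that can meet $U_n^{(0)}(\xi_i)$ for large $n$ are those at times $k=m_j-m_i$, which is a welcome clarification but not a different method.
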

\begin{proof} Note that $\AA_{p,n}^{(0)}=\bigcup_{i=1}^N \AA_{p,n}^{(0)}(\xi_i)$, where
$$\AA_{p,n}^{(0)}(\xi_i)=U_{n}^{(0)}(\xi_i)-\bigcup_{j=i+1}^{N+i} f^{-(m_j-m_i)}(U_{n}^{(0)}(\xi_j)).$$
Since
$\displaystyle\theta=\lim_{n\to\infty} \frac{\mu(\AA_{p,n}^{(0)})}{\mu(U_n^{(0)})}$, the conclusion follows.
\end{proof}

Just like in the non-peridic case, we now consider some particular examples that illustrate the richness of scenarios that the formula for the EI of the previous proposition encompasses. 
Hence, as before, we assume that condition ($R3$) holds.

First, we introduce some notation.

Let $I_1$ be the set of indices $i\in  \{1,\dots N\}$ such that
\begin{equation}\label{eq:I1p} \bigcup_{\ell=i+1}^{N+i} f^{-(m_l-m_i)}(U_{n}^{(0)}(\xi_\ell)) \cap\partial U_{n}^{(0)}(\xi_i)=\emptyset.\end{equation}
Under ($R3$), for every $i\in I_1$, there exists $j_{i}\in \{i+1,\dots N+i\}$ such that
\begin{equation*}
\bigcup_{\ell=i+1}^{N+i} f^{-(m_l-m_i)}(U_{n}^{(0)}(\xi_l))\cap U_{n}^{(0)}(\xi_i)=f^{-(m_{j_{i}}-m_i)}(U_{n}^{(0)}(\xi_{j_{i}}))\cap U_{n}^{(0)}(\xi_i).
\end{equation*}

\begin{corollary}\label{particular3}
If we are under the conditions of Proposition~\ref{prop:extremal-index-period} and the
condition $(R3)$ is satisfied,
 then the extremal index $\theta$ is given by
$$
\theta=\lim_{n\to\infty} \frac{\sum_{i\in I_1} \left[\mu\left(U_{n}^{(0)}(\xi_i)\right)- \mu\left(f^{-(m_{j_i}-m_i)}(U_{n}^{(0)}(\xi_{j_i}))\cap U_{n}^{(0)}(\xi_i)\right)\right]}
{\sum_{i=1}^N \mu\left(U_{n}^{(0)}(\xi_i)\right)},
$$
whenever the limit exists.
\end{corollary}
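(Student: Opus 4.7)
The plan is to reduce to Proposition~\ref{prop:extremal-index-period} and use condition $(R3)$ in exactly the same way as in the non-periodic analogue, Corollary~\ref{particular1}. From the proof of Proposition~\ref{prop:extremal-index-period} we already have the decomposition
$$\AA_{p,n}^{(0)}=\bigcup_{i=1}^N \AA_{p,n}^{(0)}(\xi_i),\qquad \AA_{p,n}^{(0)}(\xi_i)=U_{n}^{(0)}(\xi_i)\setminus\bigcup_{\ell=i+1}^{N+i} f^{-(m_\ell-m_i)}(U_{n}^{(0)}(\xi_\ell)),$$
and these pieces are pairwise disjoint because the balls $U_{n}^{(0)}(\xi_i)$ are disjoint by $(R1)$. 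Since $\theta=\lim_n \mu(\AA_{p,n}^{(0)})/\mu(U_n^{(0)})$, the task reduces to evaluating $\mu(\AA_{p,n}^{(0)}(\xi_i))$ for each $i\in\{1,\dots,N\}$, splitting into the cases $i\in I_1$ and $i\notin I_1$.

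For $i\in I_1$, the defining property \eqref{eq:I1p} says that no preimage $f^{-(m_\ell-m_i)}(U_{n}^{(0)}(\xi_\ell))$ meets $\partial U_{n}^{(0)}(\xi_i)$, and $(R3)$ then forces the connected component of each such preimage that intersects $U_{n}^{(0)}(\xi_i)$ to lie entirely inside $U_{n}^{(0)}(\xi_i)$. By the very definition of $j_i$ given just before the corollary, the intersection of the full union with $U_{n}^{(0)}(\xi_i)$ collapses to the single set $f^{-(m_{j_i}-m_i)}(U_{n}^{(0)}(\xi_{j_i}))\cap U_{n}^{(0)}(\xi_i)$, so
$$\mu\bigl(\AA_{p,n}^{(0)}(\xi_i)\bigr)=\mu\bigl(U_{n}^{(0)}(\xi_i)\bigr)-\mu\bigl(f^{-(m_{j_i}-m_i)}(U_{n}^{(0)}(\xi_{j_i}))\cap U_{n}^{(0)}(\xi_i)\bigr).$$

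For $i\notin I_1$, by negation of \eqref{eq:I1p} some preimage $f^{-(m_{\ell^\ast}-m_i)}(U_{n}^{(0)}(\xi_{\ell^\ast}))$ does meet $\partial U_{n}^{(0)}(\xi_i)$, and the dichotomy in $(R3)$ then forces $U_{n}^{(0)}(\xi_i)\subset f^{-(m_{\ell^\ast}-m_i)}(U_{n}^{(0)}(\xi_{\ell^\ast}))$; hence $\AA_{p,n}^{(0)}(\xi_i)=\emptyset$ and contributes nothing. Summing the two cases and dividing by $\mu(U_n^{(0)})=\sum_{i=1}^N \mu(U_{n}^{(0)}(\xi_i))$ yields the claimed formula.

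There is no genuine analytic obstacle here; the argument is set-theoretic bookkeeping made rigid by $(R3)$. The one structural point worth flagging, by contrast with Corollary~\ref{particular1}, is that the index $i=N$ is no longer treated separately: the range $\{i+1,\dots,N+i\}$ always contains $\ell=N+i$ with $\xi_{N+i}=\xi_i$, so in particular for $i=N$ the self-return term $f^{-p}(U_{n}^{(0)}(\xi_N))$ is always present, and by repellingness of $\zeta$ its connected component through $\xi_N$ is a strictly smaller ball inside $U_{n}^{(0)}(\xi_N)$. This is exactly why no isolated summand $\mu(U_{n}^{(0)}(\xi_N))$ appears in the numerator of the periodic formula.
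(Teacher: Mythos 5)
Your proof is correct and follows the route the paper clearly has in mind: decompose $\AA_{p,n}^{(0)}$ into the disjoint pieces $\AA_{p,n}^{(0)}(\xi_i)$ from Proposition~\ref{prop:extremal-index-period}, use $(R3)$ together with the definitions of $I_1$ and $j_i$ to reduce each piece to the single-set difference when $i\in I_1$, and use the $(R3)$ dichotomy to show the piece is empty when $i\notin I_1$. The paper omits this proof, but your argument coincides with the base case $k=0$ of the induction the paper does spell out in Proposition~\ref{prop:multip-R3-per}, and your closing remark about why the isolated $\mu(U_{n}^{(0)}(\xi_N))$ term of the non-periodic formula disappears (the range $\{i+1,\ldots,N+i\}$ is never empty, and $\xi_{N+i}=\xi_i$ always contributes a self-return term) is exactly the right structural observation.
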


Let us now assume we are in the particular case where $\mu$ is absolutely continuous with respect to the Lebesgue measure and \eqref{eq:density} holds.

\begin{corollary} If we are under the hypotheses of Corollary \ref{particular3}  and $\mu$ is absolutely continuous and \eqref{eq:density} holds then
the extremal index $\theta$ is given by
$$
\theta=\lim_{n\to\infty} \frac{\displaystyle\sum_{i\in I_1} \left[\mu\left(U_{n}^{(0)}(\xi_i)\right)- \displaystyle\frac{1}{\left|\det Df^{m_{j_i}-m_i}(\xi_{i})\right|}\mu\left(U_{n}^{(0)}(\xi_{j_i})\right)\right]}
{\displaystyle\sum_{i=1}^N \mu\left(U_{n}^{(0)}(\xi_i)\right)}.
$$
\end{corollary}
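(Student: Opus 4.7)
The statement is the repelling-periodic analogue of Corollary~\ref{particular1cor}, and the plan is to mirror that proof. Starting from the formula provided by Corollary~\ref{particular3}, all that needs to be shown is the asymptotic
$$\mu\bigl(f^{-(m_{j_i}-m_i)}(U_n^{(0)}(\xi_{j_i}))\cap U_n^{(0)}(\xi_i)\bigr)\sim\frac{1}{|\det Df^{m_{j_i}-m_i}(\xi_i)|}\,\mu(U_n^{(0)}(\xi_{j_i}))$$
as $n\to\infty$; once this is established, substituting into the numerator of the formula of Corollary~\ref{particular3} and keeping the same denominator yields the claim.

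I would first invoke condition ($R3$) and Remark~\ref{rem:boundary} to identify $f^{-(m_{j_i}-m_i)}(U_n^{(0)}(\xi_{j_i}))\cap U_n^{(0)}(\xi_i)$ with a single connected preimage branch contained in $U_n^{(0)}(\xi_i)$. Since $\xi_i=f^{m_i}(\zeta)$ and $\xi_{j_i}=f^{m_{j_i}}(\zeta)$ lie on the orbit of $\zeta$, we have $f^{m_{j_i}-m_i}(\xi_i)=\xi_{j_i}$; when $j_i>N$ the convention $m_{i+N}=m_i+p$ and $\xi_{i+N}=\xi_i$ introduced before Proposition~\ref{prop:extremal-index-period} applies, and the Jacobian factor reduces to $|\det Df^p(\xi_i)|$, recovering the expression from the single-maximum periodic case of \cite{FFT12}.

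Because $\zeta$ is repelling periodic, $f^{m_{j_i}-m_i}$ is a local diffeomorphism on a neighbourhood of $\xi_i$ for $n$ sufficiently large, and the preimage branch shrinks to $\{\xi_i\}$ as $n\to\infty$. The change of variables formula together with the mean value theorem applied to the Jacobian, and the density hypothesis \eqref{eq:density} applied at both $\xi_i$ and $\xi_{j_i}$, then yield the displayed asymptotic; this is precisely the computation carried out in the proof of Corollary~\ref{particular1cor} and goes through unchanged. The main subtlety is the joint use of \eqref{eq:density} at two distinct scales (for the pullback near $\xi_i$ and for $U_n^{(0)}(\xi_{j_i})$ itself), which, as in the non-periodic analogue, is guaranteed in the one-dimensional smooth setting mentioned after \eqref{eq:density}; everything else is direct substitution into Corollary~\ref{particular3}.
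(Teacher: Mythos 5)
Your proposal is correct and follows essentially the same route as the paper: the paper's own proof simply declares the argument ``analogous to the proof of the non-periodic case presented in Corollary~\ref{particular1cor}'', which in turn consists of exactly the asymptotic you isolate, proved via the mean value theorem and \eqref{eq:density}. Your additional observations --- that condition (R3) and Remark~\ref{rem:boundary} pin down the relevant preimage branch, and that the convention $m_{i+N}=m_i+p$ handles indices $j_i>N$ (recovering the $|\det Df^p(\xi_i)|$ factor from the single-maximum periodic case) --- are consistent with the paper's implicit reasoning and usefully make explicit what ``analogous'' means here.
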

\begin{proof}
The proof is analogous to the proof of the non-periodic case presented in Corollary~\ref{particular1cor}.
\end{proof}

\subsection{The multiplicity distribution}

As mentioned before, by Proposition 2.10 of \cite{FFR15} we have that the multiplicity
distribution of the limiting compound Poisson process for the REPP is given by the formula \eqref{def:mult-distr},
whenever the limit exists.

Recall that we are assuming that ($R3$) holds.

Let $I_1$ be defined as in \eqref{eq:I1p}. For every $i\in \{1,\dots,N\}$, we define $j_{i,0}=i$. Then, for $i\in \{1,\dots,N\}$ we inductively define $I_k$ in the following way.

Assuming that $I_k$ is defined as well as $j_{1,k-1}, \ldots, j_{N,k-1}$, then for every $i=1, \ldots, N$, we define 
$j_{i,k}$ to the index such that
\begin{equation}
\bigcup_{\ell=i+1}^{N+i} f^{-(m_{j_{\ell,k-1}}-m_i)}(U_{n}^{(0)}(\xi_{j_{\ell,k-1}}))\cap U_{n}^{(0)}(\xi_i)=f^{-(m_{j_{i,k}}-m_i)}(U_{n}^{(0)}(\xi_{j_{i,k}}))\cap U_{n}^{(0)}(\xi_i),
\end{equation}
when $i\in I_{k}$ and $j_{i,k}=i$, otherwise.

Now, we define $I_{k+1}$ as the set of indices $i\in  \{1,\dots N-k\}$ such that
$$ \bigcup_{\ell=i+1}^{N+i} f^{-(m_{j_{\ell,k}}-m_i)}(U_{n}^{(0)}(\xi_{j_{\ell,k}}))\cap \partial U_{n}^{(0)}(\xi_i)=\emptyset.$$

\begin{proposition}
\label{prop:multip-R3-per}
The multiplicity distribution of the point process is given by equation \eqref{def:mult-distr-per} (whenever the limit exists),
where, for $k<N$,
\begin{align*}
\mu(\AA^{(k)}_{q,n})=
\sum_{i\in I_{k+1}} &\left[\mu\left(f^{-(m_{j_{i,k}}-m_i)}(U_{n}^{(0)}(\xi_{j_{i,k}})) \cap U_{n}^{(0)}(\xi_i)\right)\right.\\
&\left.-\mu\left(f^{-(m_{j_{i,k+1}}-m_i)}(U_{n}^{(0)}(\xi_{j_{i,k+1}}))\cap U_{n}^{(0)}(\xi_i)\right)\right]
\end{align*}
 and, for $k\geq N$, $\mu(\AA^{(k)}_{q,n})=0$.

\end{proposition}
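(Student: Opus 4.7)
The plan is to argue by induction on $k$, in direct parallel with Proposition~\ref{prop:multip-R3}, the only substantive change being that each ``successor'' union wraps around via the periodic convention $m_{i+N} = m_i + p$, so that $\AA_{q,n}^{(k)}$ decomposes as
$$\AA_{q,n}^{(k)} = \bigcup_{i=1}^N \AA_{q,n}^{(k)}(\xi_i), \qquad \AA_{q,n}^{(k)}(\xi_i) = U_n^{(k)}(\xi_i) \setminus \bigcup_{\ell=i+1}^{N+i} f^{-(m_\ell-m_i)}(U_n^{(k)}(\xi_\ell)),$$
with the wrap-around now occurring on the upper end of $\ell$. Under condition ($R3$), each preimage $f^{-(m_\ell-m_i)}(U_n^{(k)}(\xi_\ell))$ intersects $U_n^{(0)}(\xi_i)$ either in a set containing $\partial U_n^{(0)}(\xi_i)$ or in a set that does not touch the boundary, exactly as in the non-periodic case.

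The key inductive claim I would establish is that, for every $i \in I_{k+1}$,
$$\AA_{q,n}^{(k)}(\xi_i) = U_n^{(0)}(\xi_i) \cap f^{-(m_{j_{i,k}}-m_i)}\!\left(U_n^{(0)}(\xi_{j_{i,k}})\right) \setminus f^{-(m_{j_{i,k+1}}-m_i)}\!\left(U_n^{(0)}(\xi_{j_{i,k+1}})\right)$$
and $U_n^{(k+1)}(\xi_i) = f^{-(m_{j_{i,k+1}}-m_i)}(U_n^{(0)}(\xi_{j_{i,k+1}})) \cap U_n^{(0)}(\xi_i)$, whereas for $i \notin I_{k+1}$ we have $\mu(\AA_{q,n}^{(k)}(\xi_i)) = 0$ and $U_n^{(k+1)}(\xi_i) = U_n^{(k)}(\xi_i)$. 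The base case $k=0$ follows directly from the decomposition above and the very definition of $j_{i,1}$ for $i \in I_1$; for $i \notin I_1$, condition ($R3$) forces $U_n^{(0)}(\xi_i)$ to be entirely contained in one of the preimages, so that $\mu(\AA_{q,n}^{(0)}(\xi_i)) = 0$.

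The inductive step then substitutes the hypothesis $U_n^{(k)}(\xi_\ell) = f^{-(m_{j_{\ell,k}}-m_\ell)}(U_n^{(0)}(\xi_{j_{\ell,k}})) \cap U_n^{(0)}(\xi_\ell)$ into the formula for $\AA_{q,n}^{(k)}(\xi_i)$. Pulling back by $f^{-(m_\ell-m_i)}$ and using $f^{-(m_\ell-m_i)} \circ f^{-(m_{j_{\ell,k}}-m_\ell)} = f^{-(m_{j_{\ell,k}}-m_i)}$, the successor union rewrites as $\bigcup_{\ell=i+1}^{N+i} f^{-(m_{j_{\ell,k}}-m_i)}(U_n^{(0)}(\xi_{j_{\ell,k}}))$, and under ($R3$) this collapses to the single preimage indexed by $j_{i,k+1}$ by the very definition of that index. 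Summing the measures over $i$ with only the terms in $I_{k+1}$ contributing then yields the claimed formula for $\mu(\AA_{q,n}^{(k)})$. Finally, for $k \geq N$ the stipulation $I_{k+1} \subset \{1,\ldots,N-k\}$ forces $I_{k+1} = \emptyset$, giving $\mu(\AA_{q,n}^{(k)}) = 0$.

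The principal obstacle is keeping the index bookkeeping straight through the periodic wrap-around: one must verify that the preimages $f^{-(m_\ell-m_i)}$ for $\ell \in \{N+1,\ldots,N+i\}$ --- corresponding to maxima in the next period, so that $U_n^{(0)}(\xi_\ell) = U_n^{(0)}(\xi_{\ell-N})$ but accessed via an extra factor of $f^p$ --- contribute genuine new nesting to the structure rather than merely repeating earlier terms. Aside from this subtlety, the computations parallel those of Proposition~\ref{prop:multip-R3} almost line by line, the non-periodic ``extra term'' $\mu(f^{-(m_{j_{N-k,k}}-m_{N-k})}(U_n^{(0)}(\xi_{j_{N-k,k}})) \cap U_n^{(0)}(\xi_{N-k}))$ disappearing here precisely because the wrap-around ensures no index $i$ plays the role of a ``last'' one with empty successor union.
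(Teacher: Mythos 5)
Your inductive argument for the range $k<N$ matches the paper's proof essentially line by line: the same decomposition $\AA_{q,n}^{(k)}=\bigcup_{i=1}^N\AA_{q,n}^{(k)}(\xi_i)$, the same key claim that for $i\in I_{k+1}$ one has $\AA_{q,n}^{(k)}(\xi_i)=U_n^{(0)}(\xi_i)\cap f^{-(m_{j_{i,k}}-m_i)}(U_n^{(0)}(\xi_{j_{i,k}}))\setminus f^{-(m_{j_{i,k+1}}-m_i)}(U_n^{(0)}(\xi_{j_{i,k+1}}))$ and $U_n^{(k+1)}(\xi_i)=f^{-(m_{j_{i,k+1}}-m_i)}(U_n^{(0)}(\xi_{j_{i,k+1}}))\cap U_n^{(0)}(\xi_i)$, and the same collapsing of the wrap-around union to the single preimage indexed by $j_{i,k+1}$ via ($R3$). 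Your observation that the extra term $\alpha_n(m_{j_{N-k,k}},m_{N-k})$ from the non-periodic case disappears because no index has an empty successor union under the periodic wrap-around is exactly right.

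However, there is a genuine problem with your treatment of the case $k\geq N$, and in fact your own closing remark exposes it. You argue that $I_{k+1}\subset\{1,\ldots,N-k\}$ forces $I_{k+1}=\emptyset$ for $k\geq N$, hence $\mu(\AA^{(k)}_{q,n})=0$. But this is circular: $I_{k+1}=\emptyset$ only yields $\mu(\AA^{(k)}_{q,n})=0$ via the formula you are trying to prove; it does not follow from the recursive definition of $\AA^{(k)}_{q,n}$. More fundamentally, the conclusion itself is false in the periodic setting. As you yourself observe, the wrap-around means no index plays the role of a ``last'' one, so the nesting $U_n^{(k)}(\xi_i)\supset U_n^{(k+1)}(\xi_i)\supset\cdots$ never terminates --- each $U_n^{(k)}(\xi_i)$ is a smaller nonempty ball produced by iterating the backward contraction around the periodic orbit. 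The paper's own worked example (period $5$, $N=3$ maxima) makes this explicit: $\mu(\AA^{(k)}_{5,n})$ decays geometrically in $k$ but is never zero, and correspondingly $\pi(k)>0$ for all $k$. The constraint $I_{k+1}\subset\{1,\ldots,N-k\}$ and the clause ``for $k\geq N$, $\mu(\AA^{(k)}_{q,n})=0$'' are artifacts carried over from the non-periodic Proposition~\ref{prop:multip-R3}, where they are correct; in the periodic setting $I_{k+1}$ should range over $\{1,\ldots,N\}$ for all $k$ and the vanishing claim should be dropped. You should replace the $k\geq N$ paragraph with the observation that the recursion persists indefinitely, and the formula $\mu(\AA^{(k)}_{q,n})=\sum_{i\in I_{k+1}}[\cdots]$ holds for all $k$, with the geometric decay in $k$ coming from the expansion rate along the periodic orbit rather than from the sets becoming empty.
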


\begin{proof}
We will prove by induction that, for $i\in I_{k+1}$,
$$\AA_{q,n}^{(k)}(\xi_i)= U_{n}^{(0)}(\xi_i)\cap f^{-(m_{j_{i,k}}-m_i)}(U_{n}^{(0)}(\xi_{j_{i,k}}))-f^{-(m_{j_{i,k+1}}-m_i)}(U_{n}^{(0)}(\xi_{j_{i,k+1}}))$$
and $$U_{n}^{(k+1)}(\xi_i)=f^{-(m_{j_{i,k+1}}-m_i)}(U_{n}^{(0)}(\xi_{j_{i,k+1}}))\cap U_{n}^{(0)}(\xi_i),$$
and, for $i\not\in I_{k+1}$, $\mu\left(\AA_{q,n}^{(k)}(\xi_i)\right)=0$ since $U_{n}^{(k)}(\xi_i)=U_{n}^{(k-1)}(\xi_i)$.

For $i\in I_1$ since $j_{\ell,0}=\ell$, we get
\begin{align*}
\AA_{q,n}^{(0)}(\xi_i)&=U_{n}^{(0)}(\xi_i)-\bigcup_{\ell=i+1}^{N+i} f^{-(m_\ell-m_i)}(U_{n}^{(0)}(\xi_\ell))
\\&=U_{n}^{(0)}(\xi_i)-\bigcup_{\ell=i+1}^{N+i} f^{-(m_{j_{\ell.0}}-m_i)}(U_{n}^{(0)}(\xi_{j_{\ell,0}}))
\\&=U_{n}^{(0)}(\xi_i)\cap f^{-(m_{j_{i,0}}-m_i)}(U_{n}^{(0)}(\xi_{j_{i,0}}))-f^{-(m_{j_{i,1}}-m_i)}(U_{n}^{(0)}(\xi_{j_{i,1}})),
\end{align*}
by definition of $j_{i,1}$, and, moreover, we have $U_{n}^{(1)}(\xi_i)=f^{-(m_{j_{i,1}}-m_i)}(U_{n}^{(0)}(\xi_{j_{i,1}}))\cap U_{n}^{(0)}(\xi_i)$.

For $i\not\in I_1$, $i\not =N$, $U_{n}^{(0)}(\xi_i)-\bigcup_{\ell=i+1}^{N+i} f^{-(m_\ell-m_i)}(U_{n}^{(0)}(\xi_l))=\emptyset$, so $\mu\left(\AA_{q,n}^{(0)}(\xi_i)\right)=0$ and $\mu\left(U_{n}^{(1)}(\xi_i)\right)=\mu\left(U_{n}^{(0)}(\xi_i)\right)$.

By induction, for $i\in I_{k+1}$, we obtain
\begin{align*}
\AA_{q,n}^{(k)}(\xi_i)&=U_{n}^{(k)}(\xi_i)-\bigcup_{\ell=i+1}^{N+i} f^{-(m_\ell-m_i)}(U_{n}^{(k)}(\xi_l))
\\&=U_{n}^{(k)}(\xi_i)-\bigcup_{\ell=i+1}^{N+i} f^{-(m_\ell-m_i)}\left(f^{-(m_{j_{\ell,k}}-m_l)}(U_{n}^{(0)}(\xi_{j_{\ell,k}}))\cap U_{n}^{(0)}(\xi_i)\right),\,\mbox{by induction}
\\&=f^{-(m_{j_{i,k}}-m_i)}(U_{n}^{(0)}(\xi_{j_{i,k}})) \cap U_{n}^{(0)}(\xi_i)-\mu\left(\bigcup_{\ell=i+1}^{N+i} f^{-(m_{j_{\ell,k}}-m_i)}(U_{n}^{(0)}(\xi_{j_{\ell,k}}))\right), \, \mbox{by induction}
\\&=f^{-(m_{j_{i,k}}-m_i)}(U_{n}^{(0)}(\xi_{j_{i,k}}))\cap U_{n}^{(0)}(\xi_i)-f^{-(m_{j_{i,k+1}}-m_i)}(U_{n}^{(0)}(\xi_{j_{i,k+1}})), \mbox{ by definition of $j_{i,k+1}$}.
\end{align*}
and in particular we also have  $U_{n}^{(k+1)}(\xi_i)=f^{-(m_{j_{i,k+1}}-m_i)}(U_{n}^{(0)}(\xi_{j_{i,k+1}}))\cap U_{n}^{(0)}(\xi_i)$.

By induction, for $i\in I_{k+1}$, we obtain
\begin{align*}
&\AA_{q,n}^{(k)}(\xi_i)=U_{n}^{(k)}(\xi_i)-\bigcup_{\ell=i+1}^{N+i} f^{-(m_\ell-m_i)}(U_{n}^{(k)}(\xi_l))
\\&=U_{n}^{(k)}(\xi_i)-\bigcup_{\ell=i+1}^{N+i} f^{-(m_\ell-m_i)}\left(f^{-(m_{j_{\ell,k}}-m_l)}(U_{n}^{(0)}(\xi_{j_{\ell,k}}))\cap U_{n}^{(0)}(\xi_i)\right),\,\mbox{by induction}
\\&=f^{-(m_{j_{i,k}}-m_i)}(U_{n}^{(0)}(\xi_{j_{i,k}})) \cap U_{n}^{(0)}(\xi_i)-\mu\left(\bigcup_{\ell=i+1}^{N+i} f^{-(m_{j_{\ell,k}}-m_i)}(U_{n}^{(0)}(\xi_{j_{\ell,k}}))\right), \, \mbox{by induction}
\\&=f^{-(m_{j_{i,k}}-m_i)}(U_{n}^{(0)}(\xi_{j_{i,k}}))\cap U_{n}^{(0)}(\xi_i)-f^{-(m_{j_{i,k+1}}-m_i)}(U_{n}^{(0)}(\xi_{j_{i,k+1}})), \mbox{ by definition of $j_{i,k+1}$}.
\end{align*}
and in particular we also have  $U_{n}^{(k+1)}(\xi_i)=f^{-(m_{j_{i,k+1}}-m_i)}(U_{n}^{(0)}(\xi_{j_{i,k+1}}))\cap U_{n}^{(0)}(\xi_i)$.

For $i\not\in I_{k+1}$, $i\not =N-k+1$ $\mu\left(\AA_{q,n}^{(k)}(\xi_i)\right)=0$ and
$$U_{n}^{(k+1)}(\xi_i)=f^{-(m_{j_{i,k}}-m_i)}(U_{n}^{(0)}(\xi_{j_{i,k}}))\cap U_{n}^{(0)}(\xi_i).$$

This concludes the proof.
\end{proof}

\begin{corollary}\label{particular4}Assume that $\mu$ is absolutely continuous with respect to the Lebesgue measure and \eqref{eq:density} holds. Then the multiplicity distribution of the point process is given by formula \eqref{def:mult-distr},
where, for $k<N$,
\begin{align*}
\mu(\AA^{(k)}_{q,n})=\displaystyle\sum_{i\in I_{k+1}} \Big(\alpha_n(m_{j_{i,k}},m_i)-\alpha_n(m_{j_{i,k+1}},m_i)\Big)+\alpha_n(m_{j_{N-k,k}},m_{N-k})
\end{align*}
and
$$\alpha_n(a,b)=\displaystyle\frac{1}{|\det Df^{a-b}(f^b(\zeta))|}\mu \left(U_{n}^{(0)}(f^a(\zeta))\right).$$
\end{corollary}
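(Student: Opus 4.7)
The plan is to substitute the absolute-continuity estimate into the formula provided by Proposition~\ref{prop:multip-R3-per}, mirroring the passage from Proposition~\ref{prop:multip-R3} to Corollary~\ref{particular2} in the non-periodic setting.

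First, for each $i\in I_{k+1}$ and each relevant index $j\in\{j_{i,k},j_{i,k+1}\}$, I would establish the asymptotic identity
\begin{equation*}
\mu\!\left(f^{-(m_j-m_i)}(U_n^{(0)}(\xi_j))\cap U_n^{(0)}(\xi_i)\right) \sim \frac{1}{|\det Df^{m_j-m_i}(\xi_i)|}\,\mu\!\left(U_n^{(0)}(\xi_j)\right) = \alpha_n(m_j,m_i).
\end{equation*}
The justification is exactly the mean value argument used in the proof of Corollary~\ref{particular2}. Condition ($R3$), together with Remark~\ref{rem:boundary}, guarantees that the connected component of $f^{-(m_j-m_i)}(U_n^{(0)}(\xi_j))$ meeting $U_n^{(0)}(\xi_i)$ is contained in $U_n^{(0)}(\xi_i)$. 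Because $\zeta$ is a repelling periodic point, $Df$ is a local diffeomorphism all along its orbit, and so on a small enough neighbourhood of $\xi_i$ the iterate $f^{m_j-m_i}$ is a diffeomorphism onto its image. Thus the Lebesgue measure of the relevant component equals $|\det Df^{m_j-m_i}(\xi_i)|^{-1}\,\l(U_n^{(0)}(\xi_j))$ up to a factor $1+o(1)$; applying \eqref{eq:density} at both $\xi_i$ and $\xi_j$ converts Lebesgue to $\mu$-measure and yields the claimed relation.

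Next, I would substitute these estimates termwise into the expression for $\mu(\AA^{(k)}_{q,n})$ from Proposition~\ref{prop:multip-R3-per}, which immediately produces the stated formula for $k<N$. The case $k\geq N$ is inherited directly from the proposition, where $\mu(\AA^{(k)}_{q,n})=0$.

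There is essentially no genuine obstacle here; the proof is a mechanical repetition of the corresponding step in the non-periodic regime. The only point deserving a brief verification is the non-degeneracy of $Df^{m_j-m_i}(\xi_i)$, which is automatic: the $\xi_i$ lie on a repelling periodic orbit, so $Df$ has full rank at each of them and the chain rule delivers a non-zero Jacobian for every composition appearing in the argument.
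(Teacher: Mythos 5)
Your proposal is correct and follows precisely the route the paper itself takes: the paper's proof of this corollary consists of a single sentence invoking the analogy with Corollary~\ref{particular2}, and your argument spells out that analogy (mean value theorem plus \eqref{eq:density} to convert the preimage measures into $\alpha_n(\cdot,\cdot)$, then termwise substitution into Proposition~\ref{prop:multip-R3-per}). One caveat worth flagging, though it reflects an inconsistency in the paper rather than a defect in your reasoning: the formula displayed in Proposition~\ref{prop:multip-R3-per} contains \emph{only} the sum over $i\in I_{k+1}$, with no extra summand, whereas the statement of Corollary~\ref{particular4} carries the additional term $\alpha_n(m_{j_{N-k,k}},m_{N-k})$ apparently copied over from the non-periodic Corollary~\ref{particular2}. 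In the non-periodic case that extra term arises from the index $i=N-k$ for which $\bigcup_{\ell=i+1}^{N}$ is empty; in the periodic case the union $\bigcup_{\ell=i+1}^{N+i}$ never vanishes, so a faithful substitution into Proposition~\ref{prop:multip-R3-per} does \emph{not} produce the displayed formula as written, and your claim that it does so ``immediately'' should be tempered, or the corollary's statement should be corrected to match the proposition.
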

\begin{proof}
The proof is analogous to the proof of the non-periodic case presented in Corollary~\ref{particular2}.
\end{proof}

\begin{corollary}\label{particular5}
Assume that $\mu$ is absolutely continuous with respect to the Lebesgue measure and \eqref{eq:density} holds and
$$ f^{-(m_{i+1}-m_i)}\left(U_{n}^{(0)}(\xi_{i+1})\right) \subset U_{n}^{(0)}(\xi_i).$$
Then the multiplicity distribution of the point process is given by formula \eqref{def:mult-distr}
where, for $k<N$,
\begin{align*}
\mu(\AA^{(k)}_{q,n})=\sum_{i\in I_{k+1}} \Big(\alpha_n(m_{i+k},m_i)-\alpha_n(m_{i+k+1},m_i)\Big)+\alpha_n(m_N,m_{N-k})
\end{align*}
and $\alpha_n(a,b)$ is as in Corollary~\ref{particular4}.
\end{corollary}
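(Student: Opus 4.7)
The plan is to derive this corollary as a direct specialisation of Corollary~\ref{particular4} in which the recursively defined indices $j_{i,k}$ collapse to the explicit formula $j_{i,k}=i+k$. Everything else — the conversion of measures of intersected preimages into $\alpha_n$ terms — is already carried out in Corollary~\ref{particular4}, so the only novel content is identifying the indices under the nesting hypothesis.

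First I would iterate the hypothesis $f^{-(m_{i+1}-m_i)}\bigl(U_n^{(0)}(\xi_{i+1})\bigr)\subset U_n^{(0)}(\xi_i)$ to produce a complete chain. Applying the hypothesis with $i$ replaced by $i+1,i+2,\ldots$ and pulling each inclusion back by the appropriate iterate of $f$, one obtains
\[
f^{-(m_{i+k+1}-m_i)}\bigl(U_n^{(0)}(\xi_{i+k+1})\bigr)\;\subset\;f^{-(m_{i+k}-m_i)}\bigl(U_n^{(0)}(\xi_{i+k})\bigr)\;\subset\;\cdots\;\subset\;U_n^{(0)}(\xi_i).
\]
Consequently, the family $\{f^{-(m_\ell-m_i)}(U_n^{(0)}(\xi_\ell))\}_{\ell>i}$ is totally ordered by inclusion, with the largest element always corresponding to the smallest value of $\ell$. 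Because these inclusions are strict (and in view of Remark~\ref{rem:boundary}), the preimage is in fact a connected component strictly interior to $U_n^{(0)}(\xi_i)$, so condition ($R3$) is met and $i\in I_{k+1}$ for every $i$ in the relevant range and every $k$.

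Next I would prove by induction on $k$ that $j_{i,k}=i+k$. The base case $j_{i,0}=i$ is the definition. For the inductive step assume $j_{\ell,k-1}=\ell+k-1$ for all $\ell$. Then by definition of $j_{i,k}$ one has to identify the largest set in
\[
\bigcup_{\ell=i+1}^{N+i} f^{-(m_{j_{\ell,k-1}}-m_i)}\bigl(U_n^{(0)}(\xi_{j_{\ell,k-1}})\bigr)\cap U_n^{(0)}(\xi_i)=\bigcup_{\ell=i+1}^{N+i} f^{-(m_{\ell+k-1}-m_i)}\bigl(U_n^{(0)}(\xi_{\ell+k-1})\bigr)\cap U_n^{(0)}(\xi_i).
\]
By the total ordering established above, the union equals its largest term, namely the one corresponding to $\ell=i+1$, which gives $j_{i,k}=i+k$ as required.

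Finally, substituting $j_{i,k}=i+k$ (and in particular $j_{N-k,k}=N$) directly into the formula from Corollary~\ref{particular4} converts $\alpha_n(m_{j_{i,k}},m_i)-\alpha_n(m_{j_{i,k+1}},m_i)$ into $\alpha_n(m_{i+k},m_i)-\alpha_n(m_{i+k+1},m_i)$ and the boundary term $\alpha_n(m_{j_{N-k,k}},m_{N-k})$ into $\alpha_n(m_N,m_{N-k})$, producing the stated expression for $\mu(\AA^{(k)}_{q,n})$. The main (and only non-routine) obstacle is the inductive bookkeeping in Step 2, in particular confirming that composing the single-step nesting hypothesis yields the multi-step strict nesting needed both to force the union to reduce to a single term and to guarantee membership in $I_{k+1}$; once that is in place, the rest is a mechanical substitution into a previously established formula.
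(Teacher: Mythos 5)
Your proof is correct and takes essentially the same route as the paper. The paper's entire proof is the one-line observation ``note that in this case $j_{i,k}=i+k$,'' and your argument simply unpacks that line: you iterate the single-step nesting hypothesis to a chain, use it to identify the dominant term of the union defining $j_{i,k}$, and prove $j_{i,k}=i+k$ by induction, after which Corollary~\ref{particular4} yields the formula by substitution.

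One small remark on rigor: you assert the inclusions are strict and that $(R3)$ follows, but the stated hypothesis only gives $f^{-(m_{i+1}-m_i)}(U_n^{(0)}(\xi_{i+1}))\subset U_n^{(0)}(\xi_i)$ with no claim of strictness or of missing $\partial U_n^{(0)}(\xi_i)$. In practice (and implicitly in the paper) condition $(R3)$ is inherited from Proposition~\ref{prop:multip-R3-per}/Corollary~\ref{particular4}, so it is cleaner to treat $(R3)$ as a standing assumption than to claim it is derived from the nesting hypothesis; with that reading your membership-in-$I_{k+1}$ step is immediate rather than inferred from strictness.
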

\begin{proof}
Using the notation of Proposition~\ref{prop:multip-R3-per}, note that in this case $j_{i,k}=i+k$.
\end{proof}

\subsubsection{Example}

Let $(f,\mathbb{S}^1,Leb,\mathcal{B})$ be the dynamical system where
 $f:\mathbb{S}^1\to \mathbb{S}^1$ is the map given by $f(x)=2x \mod 1$. Take the periodic point of period $5$, $\zeta= \frac{1}{31}$, and an observable $\varphi: \mathbb{S}^1\to \mathbb R\cup \{\infty\}$
such that
$$
\varphi(x)=\left\{
             \begin{array}{lll}
               -\log|x-\zeta|, & \hbox{ if }x\in B_\epsilon(\zeta), \\
               |x-f(\zeta)|^{\frac{-1}{2}}, & \hbox{ if }x\in B_\epsilon(f(\zeta)) \\
               |x-f^3(\zeta)|^{\frac{-1}{2}}, & \hbox{ if }x\in B_\epsilon(f^3(\zeta))\\
               0, & \hbox{ otherwie},
             \end{array}
           \right.
$$
where we choose $\epsilon>0$, such that $ B_\epsilon(f^i(\zeta))\cap B_\epsilon(f^j(\zeta))=\emptyset$
for $ i,j\in\{0,1,3\} $ and $i\not=j$. In this case
we have that $u_F=\sup_{x\in  \mathbb{S}^1} \varphi(x)=\infty$ and it occurs in $\zeta, f(\zeta)$ and $f^3(\zeta)$.

Given $\tau>0$ and a sequence $(u_n)_{n\in\mathbb N}$ satisfying \ref{eq:un} we have that
$$
U_n=(\zeta-e^{-u_n},\zeta+e^{-u_n})\cup(f(\zeta)-u_n^{-2},f(\zeta)+u_n^{-2})
\cup(f^3(\zeta)-u_n^{-2},f^3(\zeta)+u_n^{-2}).
$$
We notice that for $\tau$ sufficiently large we have that
$$
(\zeta-e^{-u_n},\zeta+e^{-u_n})\varsubsetneqq f^{-1}\left((f(\zeta)-u_n^{-2},f(\zeta)+u_n^{-2})\right),
$$
$$
f^{-4}(\zeta-e^{-u_n},\zeta+e^{-u_n})\varsubsetneqq f^{-2}\left((f^3(\zeta)-u_n^{-2},f^3(\zeta)+u_n^{-2})\right)\varsubsetneqq \left(f(\zeta)-u_n^{-2},f(\zeta)+u_n^{-2}\right),
$$
and
$$
f^{-2}(\zeta-e^{-u_n},\zeta+e^{-u_n})\varsubsetneqq f^{-3}\left((f(\zeta)-u_n^{-2},f(\zeta)+u_n^{-2})\right)\varsubsetneqq \left(f^3(\zeta)-u_n^{-2},f^3(\zeta)+u_n^{-2}\right).
$$
\vspace{5mm}
\textbf{The extremal index:}
Noting that $m_1=0,m_2=1, m_3=3$, then $I_0=\{2,3\}$ and $j_2=3, j_3=5$. So, by Corollary \ref{particular4} we have that

\begin{align*}
\theta=\lim_{n\to\infty} &\left( \frac{\left[\mu\left(U_{n}^{(0)}(\xi_i)\right)- \displaystyle\frac{1}{Df^{m_3-m_2}(f^{m_3}(\zeta))}\mu\left(U_{n}^{(0)}(f^{m_3}(\zeta))\right)\right]}
{\sum_{i=1}^N \mu\left(U_{n}^{(0)}(\xi_i)\right)}\right.
\\&\left.+\frac{\left[\mu\left(U_{n}^{(0)}(\xi_i)\right)- \displaystyle\frac{1}{Df^{m_5-m_3}(f^{m_5}(\zeta))}\mu\left(U_{n}^{(0)}(f^{m_5}(\zeta))\right)\right]}
{\sum_{i=1}^N \mu\left(U_{n}^{(0)}(\xi_i)\right)}\right)
\\&=\lim_{n\to\infty}\frac{u_n^{-2}-\displaystyle\frac{1}{Df^{2}(f^3(\zeta))}u_n^{-2}+u_n^{-2}-\displaystyle\frac{1}{Df^{3}(f^2(\zeta))}u_n^{-2}}{e^{-u_n}+u_n^{-2}+u_n^{-2}}
=\frac{2-\frac{1}{4}-\frac{1}{8}}{2}=\frac{13}{16}.
\end{align*}
\vspace{5mm}
\textbf{The multiplicity distribution:}
To compute the multiplicity we will use Corollary \ref{particular5}. First, we need to determine the sets $I_k$, for
all $k\in \mathbb{N}$. It is not difficult to see that, in this case,
$I_k=\{2,3\}$, for all $k\in \mathbb{N}$. We now need to determine the value of $j_{i,k}$, for all $k\in \mathbb{N}$ and all $i\in I_k$. By definition $j_{i,0}=i$, for all $i\in\mathbb{N}$.
We can verify by induction that
\begin{align*}
&j_{2,2\ell}=3\ell-2, \quad j_{2,2\ell-1}=3\ell-3,
\\& j_{3,2\ell}=3\ell-3, \quad j_{3,2\ell-1}=3\ell-5.
\end{align*}

In order to compute $\AA^{(k)}_{q,n}$, note that in this case $q=5$. So,
\begin{align*}
\mu(\AA^{(k)}_{5,n})&=\displaystyle\sum_{i\in I_{k+1}} \Big(\alpha_n(m_{j_{i,k}},m_i)-\alpha_n(m_{j_{i,k+1}},m_i)\Big)
\\&=\alpha(m_{j_{2,k}},m_2)-\alpha(m_{j_{2,k+1}},m_2)+\alpha(m_{j_{3,k}},m_3)-\alpha(m_{j_{3,k}},m_3).
\end{align*}
Considering first the case $k=2\ell$, with $\ell\in\mathbb{N}$, we have
\begin{align*}
\mu(\AA^{(2\ell)}_{5,n})&=\alpha(m_{j_{2,2\ell}},m_2)-\alpha(m_{j_{2,2\ell-1}},m_2)+\alpha(m_{j_{3,2\ell}},m_3)-\alpha(m_{j_{3,2\ell-1}},m_3)
\\&=\alpha(m_{3\ell-2},m_2)-\alpha(m_{3\ell-3},m_2)+\alpha(m_{3\ell-3},m_3)-\alpha(m_{3\ell-5},m_3)
\\&=\left(\frac{1}{2}\right)^{5\ell}u_n^{-2}-\left(\frac{1}{2}\right)^{5\ell-2}u_n^{-2}+\left(\frac{1}{2}\right)^{5\ell}u_n^{-2}-\left(\frac{1}{2}\right)^{5\ell-3}u_n^{-2}=\frac{13}{8}\left(\frac{1}{2}\right)^{5\ell}u_n^{-2}.
\end{align*}
For the case $k=2\ell+1$ with $\ell\in\mathbb{N}_0$, we have
\begin{align*}
\mu(\AA^{(2\ell+1)}_{5,n})&=\alpha(m_{j_{2,2\ell-1}},m_2)-\alpha(m_{j_{2,2\ell-2}},m_2)+\alpha(m_{j_{3,2\ell-1}},m_3)-\alpha(m_{j_{3,2\ell-2}},m_3)
\\&=\alpha(m_{3\ell-3},m_2)-\alpha(m_{3\ell-5},m_2)+\alpha(m_{3\ell-5},m_3)-\alpha(m_{3\ell-6},m_3)
\\&=\left(\frac{1}{2}\right)^{5\ell-2}u_n^{-2}-\left(\frac{1}{2}\right)^{5\ell-5}u_n^{-2}+\left(\frac{1}{2}\right)^{5\ell-3}u_n^{-2}-\left(\frac{1}{2}\right)^{5\ell-5}u_n^{-2}=\frac{5}{16}\left(\frac{1}{2}\right)^{5\ell}u_n^{-2}.
\end{align*}
Replacing these in the expression in Corollary \ref{particular4} we obtain
\begin{align*}
&\pi(2\ell+1)=\frac{\mu(\AA^{(2\ell)}_{5,n})-\mu(\AA^{(2\ell+1)}_{5,n})}{\mu(\AA^{(0)}_{5,n})}=\frac{\displaystyle\frac{13}{8}\left(\frac{1}{2}\right)^{5\ell}u_n^{-2}-\displaystyle\frac{5}{16}\left(\frac{1}{2}\right)^{5\ell}u_n^{-2}}{\displaystyle\frac{13}{8}u_n^{-2}}=\frac{21}{26}\left(\frac{1}{2}\right)^{5\ell}, \mbox{for } \ell\in\mathbb{N}_0,
\\&\pi(2\ell)=\frac{\mu(\AA^{(2\ell-1)}_{5,n})-\mu(\AA^{(2\ell)}_{5,n})}{\mu(\AA^{(0)}_{5,n})}=\frac{\displaystyle\frac{5}{16}\left(\frac{1}{2}\right)^{5(\ell-1)}u_n^{-2}-\frac{13}{8}\left(\frac{1}{2}\right)^{5\ell}u_n^{-2}}{\displaystyle\frac{13}{8}u_n^{-2}}=\frac{67}{13}\left(\frac{1}{2}\right)^{5\ell}, \mbox{for } \ell\in\mathbb{N}.
\end{align*}

\subsection{Non correlated multiple maxima including periodic points}
\label{subsec:non-correlated-periodic}

We discuss briefly the possibility of having multiple maximal points with non-intersecting orbits as in Section~\ref{subsec:non-correlated-non-periodic} but now allowing the possibility of having maximal points that are periodic. By \cite[Theorem~2]{FFT13} and \cite[Theorem~A]{AFV14}, we have that for every continuous system under Assumption A, for every $\zeta\in\X$ taken as the only maximum of $\varphi$, then there exists an EVL with an EI that is 1 if the point is not periodic or is less than 1 (whose value depends on the expansion rate at $\zeta$). Hence, we are going to consider multiple maximal points $\xi_1, \ldots, \xi_N$ and to each such point we associate the respective EI guaranteed by  \cite[Theorem~2]{FFT13} and \cite[Theorem~A]{AFV14} (see also \cite[Theorem 8]{F13}), which we denote by $\theta_i$, being that if $\xi_i$ is periodic then $\theta_i<1$.

\begin{proposition}
Assume that $f$ is continuous and satisfies Assumption A. Let $X_0,X_1,...$ be given by \eqref{eq:def-stat-stoch-proc-DS}, where the observable
$\varphi$ has $N$ maximal points $\xi_1,\ldots,\xi_N$ and let $\theta_i$ be the corresponding EI. Let $p_i$ be the period of $\zeta_i$ (we set $p_i=0$ when $\zeta_i$ is not periodic). Assume that \eqref{eq:non-intersection} holds and that the following limits exist for every $i=1,\ldots,N$
$$
\lim_{n\to\infty}\frac{\mu(U_n(\xi_i))}{\mu(U_n)}=:\alpha_i,
$$
where $U_n(\xi_i)$ is a connected component of $U_n$ as in \eqref{eq:connected-components-U}.
Then $X_0,X_1,...$ satisfies conditions $\D_q(u_n)$ and $\D'_q(u_n)$, for $q=\max_{i=1,\ldots,N} p_i$, and consequently for $(u_n)_{n\in\N}$ satisfying \eqref{eq:un} then $\lim_{n\to\infty}\mu(M_n\leq u_n)=\e^{-\theta\tau}$, where $\theta=\sum_{i=1}^N \alpha_i \theta_i.$
\end{proposition}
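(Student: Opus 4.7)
My plan is to verify Conditions $\D_q(u_n)$ and $\D'_q(u_n)$ with $q=\max_i p_i$, and then compute $\theta_n=\mu(\AA_{q,n})/\mu(U_n)$ via a decomposition of $\AA_{q,n}$ into contributions from each maximal point. Condition $\D_q(u_n)$ is immediate from Assumption~A, exactly as in Section~\ref{subsec:existence}: choosing $\phi=\I_{\AA_{q,n}}$ and $\psi=\I_{\mathscr W_{0,\ell}(\AA_{q,n})}$ gives $\gamma(n,t)=C\rho_t$, and summability of $(\rho_n)$ supplies a suitable $(t_n)$.

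For $\D'_q(u_n)$ and for computing $\theta$, the key geometric step is a \emph{decoupling claim}: for $n$ large,
\[
\AA_{q,n}=\bigsqcup_{i=1}^{N}\AA_{q,n}(\xi_i),\qquad \AA_{q,n}(\xi_i):=U_n(\xi_i)\cap\bigcap_{j=1}^{q}f^{-j}\bigl(U_n(\xi_i)^c\bigr).
\]
Reading \eqref{eq:non-intersection} as pairwise orbit-disjointness, I would establish this by showing that for $1\leq j\leq q$ and for $k\neq i$ (or $k=i$ but $p_i\nmid j$) the points $f^j(\xi_i)$ and $\xi_k$ are distinct, so that continuity of $f$ along $\mathcal O(\xi_i)$ together with the shrinking of $U_n(\xi_k)$ forces $f^j(U_n(\xi_i))\cap U_n(\xi_k)=\emptyset$ for $n$ large, by the same $\varepsilon$-argument used in Lemma~\ref{lem:continuity}. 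Once this is in place, $R(\AA_{q,n})$ is bounded below by the minimum of the within-component return times $R(\AA_{q,n}(\xi_i))$ and of the first cross-hitting times from $\AA_{q,n}(\xi_i)$ to $\AA_{q,n}(\xi_k)$ for $k\neq i$. The former diverge by Lemma~\ref{lem:continuity} when $p_i=0$ and by the Hartman--Grobman/repelling argument at the start of Section~\ref{sec:periodic} when $p_i>0$; the latter diverge by the orbit-disjointness and continuity argument already used for the decoupling, now applied on arbitrarily long horizons. Hence $R(\AA_{q,n})\to\infty$, and the decay-of-correlations estimate \eqref{eq:estimate1} then delivers $\D'_q(u_n)$ exactly as in Section~\ref{subsec:existence}.

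Combining the decomposition with the assumed limit $\mu(U_n(\xi_i))/\mu(U_n)\to\alpha_i$,
\[
\theta_n=\sum_{i=1}^{N}\frac{\mu(\AA_{q,n}(\xi_i))}{\mu(U_n(\xi_i))}\cdot\frac{\mu(U_n(\xi_i))}{\mu(U_n)}\longrightarrow\sum_{i=1}^{N}\theta_i\alpha_i.
\]
When $p_i=0$ the first factor equals $1$ for large $n$ by the decoupling, consistent with $\theta_i=1$; when $p_i>0$ this is precisely the single-maximum EI computation of \cite[Theorem~2]{FFT13} and \cite[Theorem~A]{AFV14}, valid for any $q\geq p_i$ by the monotonicity remark following Condition~$\D'_q(u_n)$. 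Combined with $\D_q(u_n)$ and $\D'_q(u_n)$, \cite[Corollary~2.4]{FFT15} yields $\lim_n\mu(M_n\leq u_n)=\e^{-\theta\tau}$. The main obstacle I foresee is the decoupling together with its cross-component variant: it must be set up uniformly in all $N(N-1)$ ordered pairs of distinct orbits and for horizons growing with $n$, and it relies on the pairwise reading of \eqref{eq:non-intersection} rather than the literal total intersection; if the orbits were to meet pairwise, a point of the common part would block the decoupling and the formula $\theta=\sum_i\alpha_i\theta_i$ would in general fail.
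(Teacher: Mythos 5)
Your proof is correct, and it follows the same overall strategy as the paper's --- verify $\D_q(u_n)$ and $\D'_q(u_n)$ by showing $R(\AA_{q,n})\to\infty$ and then read off $\theta$ --- but it is considerably more explicit than the paper's two-sentence proof, which refers back to Lemma~\ref{lem:continuity} and never justifies the formula $\theta=\sum_i\alpha_i\theta_i$. Your decoupling claim, that for large $n$ the set $\AA_{q,n}$ splits as the disjoint union of the single-maximum pieces $\AA_{q,n}(\xi_i)=U_n(\xi_i)\cap\bigcap_{j=1}^{q}f^{-j}(U_n(\xi_i)^c)$, is exactly what makes the weighted-average computation $\theta_n=\sum_i\frac{\mu(\AA_{q,n}(\xi_i))}{\mu(U_n(\xi_i))}\cdot\frac{\mu(U_n(\xi_i))}{\mu(U_n)}$ go through, and your appeal to the monotonicity-in-$q$ remark (to recover $\mu(\AA_{q,n}(\xi_i))/\mu(U_n(\xi_i))\to\theta_i$ with the common $q=\max_i p_i\geq p_i$) is the right way to handle the differing periods. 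You are also right that a bare continuity argument as in Lemma~\ref{lem:continuity} does not establish $R(\AA_{q,n})\to\infty$ when some $\xi_i$ is periodic --- the paper's wording glosses over this --- and that the Hartman--Grobman/repelling argument from the unnamed lemma opening Section~\ref{sec:periodic} is needed for those components, while orbit-disjointness plus continuity controls the cross-component hitting times. Finally, your caveat about \eqref{eq:non-intersection} is a genuine observation: as written it requires only that the total intersection of the $N$ forward orbits be empty, which for $N\geq3$ is strictly weaker than the pairwise disjointness that your decoupling (and hence the conclusion) actually uses; if, say, $\xi_2\in\mathcal O(\xi_1)$ while $\mathcal O(\xi_3)$ avoids both other orbits, the hypothesis as stated holds, yet the decoupling fails and $\theta=\sum_i\alpha_i\theta_i$ would in general be wrong.
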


\begin{proof}
Conditions $\D_q(u_n)$ and $\D'_q(u_n)$ follow from the computations in Section~\ref{subsec:existence} as long as one check that $\lim_{n\to\infty}R(\AA_{q,n})=+\infty$. This last statement follows from the definition of $\AA_{q,n}$ and a continuity argument very similar to the proof of Lemma~\ref{lem:continuity}.
\end{proof}

\section{Clustering patterns}

As we have seen in Sections~\ref{sec:nonperiodic} and \ref{sec:periodic}, multiple correlated maxima can be used as a mechanism to create clustering, alter the value of EI and produce different multiplicity distributions for the limit of REPP. When compared to the usual method of introducing clustering in a dynamical setting, which was essentially based on considering a global maximum of $\varphi$ achieved at a unique repelling periodic point, this method of forging clustering by means of considering multiple correlated maxima is much richer.

In fact, we have already seen that, on the contrary to single maximum at a repelling periodic point for which one always obtains a geometric distribution for the multiplicity distribution of the limiting compound Poisson process for the REPP, with multiple correlated maxima we can obtain different multiplicity distributions for the limit of the REPP.

But the richness of this mechanism can also be appreciated by simply looking at the observed data and the corresponding clustering patterns. Observe that the systems we are studying are either uniformly expanding or non-uniformly expanding, which means that the periodic points should be repelling. When the maximum is achieved at a single repelling point the clustering pattern is very simple: it consists of a very large observation corresponding to the first exceedance, which is followed by a strictly decreasing cluster of exceedances observed after a precise number of observations corresponding to the period (see Figure \ref{fig:old-cluster}). This is because in order to have a cluster of exceedances the orbit needs to enter in a very small neighbourhood of the periodic point $\zeta$ of period $p$. Let us assume we have a very high exceedance, which of course means that the orbit went very close to $\zeta$. Once inside this very small neighbourhood of $\zeta$, the periodicity of $\zeta$ forces the appearance of another exceedance after $p$ observations but since $\zeta$ is repelling this return to a small neighbourhood of $\zeta$ is not that deep, \ie the orbit is pulled away from $\zeta$ which explains the fading of the exceedances.

\begin{figure}[!h]
\begin{center}
\includegraphics[width=12cm]{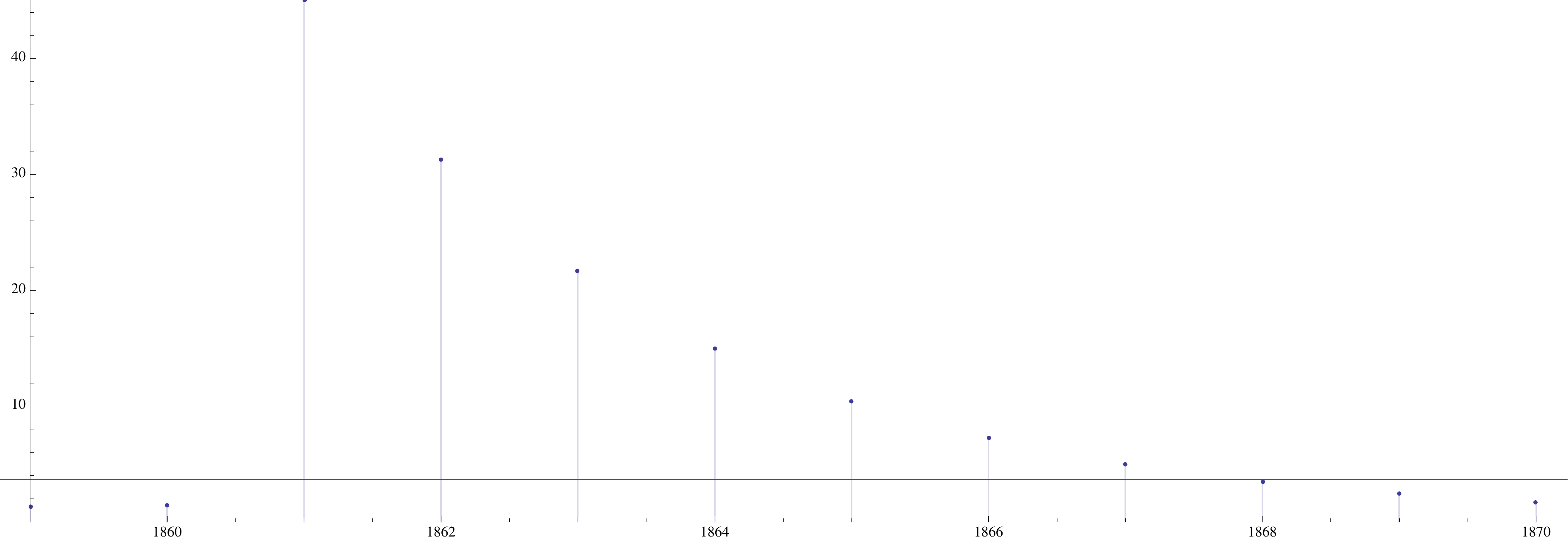}
\caption{Cluster of observations for an observable with global maximum achieved at the fixed point $1/2$ of the system $f$ given by \eqref{eq:system-example}}
\label{fig:old-cluster}
\end{center}
\end{figure}

In order to illustrate how easily one can create a different clustering pattern, where in particular one can observe that the exceedances may grow inside a cluster, we consider the following example.

We consider the system uniformly expanding map
\begin{align}
\label{eq:system-example}
f:S^1&\longrightarrow S^1
\\x &\mapsto 3x \mod 1\nonumber
\end{align}
and the observable $\varphi:S^1\longrightarrow \mathbb{R}$, where $\varphi(x)=-\log|x-\frac{1}{4}| \textbf{1}_{[0,\frac{1}{2}]}+|x-\frac{3}{4}|^{-\frac{1}{3}} \textbf{1}_{(\frac{1}{2},1]}$. This observable the maximum value is $u_F=\sup_{x\in  \mathbb{S}^1} \varphi(x)=\infty$, which is achieved at the points $\zeta=\frac{1}{4}$ and $f(\zeta)=\frac{3}{4}$, being that $\zeta=f^2(\zeta)$, \ie, the maxima correspond to the points of a periodic orbit of period 2.

Note that for $u$ sufficiently large, $\{X_0>u\}=(1/4-\e^{-u},1/4+\e^{-u})\cup(3/4-u^{-3},3/4+u^{-3})$. So an exceedance of $u$ occurs when the orbit enters a ball of radius $\e^{-u}$ around $1/4$ or a ball of radius $u^{-3}$ around $3/4$. Note that if we have an exceedance because the orbit hits the point $x=1/4+\e^{-u}/3$ and $\varphi(x)=u+\log3>u$, then in the next iterate the orbit hits the point $f(x)=3/4+e^{-u}$ and $\varphi(f(x))=\e^{u/3}\gg u+\log3>u$, for $u$ sufficiently large so that we have an exceedance that is followed by another exceedance that is even higher than the first.

This way, we create different clustering patterns which can be fully appreciated by the simulation study we performed. We chose a random point $x\in [0,1]$ (using the uniform distribution); we iterated the point 2000 times by $f$; we considered $u$ to be such that $\e^{-u}+\frac{1}{u^3} = \frac{40}{2000}$ (meaning that the expected number of exceedances of $u$ is 40). In Figure~\ref{fig:timeseries1} we can see the actual time series and in Figure~\ref{fig:new-cluster} we can see the observations forming a cluster that presents clearly a different pattern, with exceedances being followed by even higher exceedances, which was forbidden in the case of a single maximum as portrayed in Figure~\ref{fig:old-cluster}.

\begin{figure}[!h]
\begin{center}
\includegraphics[width=\textwidth]{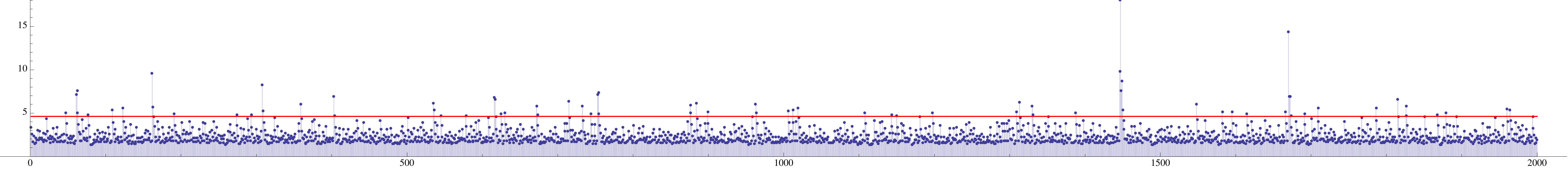}
\caption{Time series for 2000 runs of the orbit of the point $x=0.7756592465669858$ with the level $u=4.619613119957849$ depicted in red}
\label{fig:timeseries1}
\end{center}
\end{figure}

\begin{figure}[!h]
\begin{center}
\includegraphics[width=10cm]{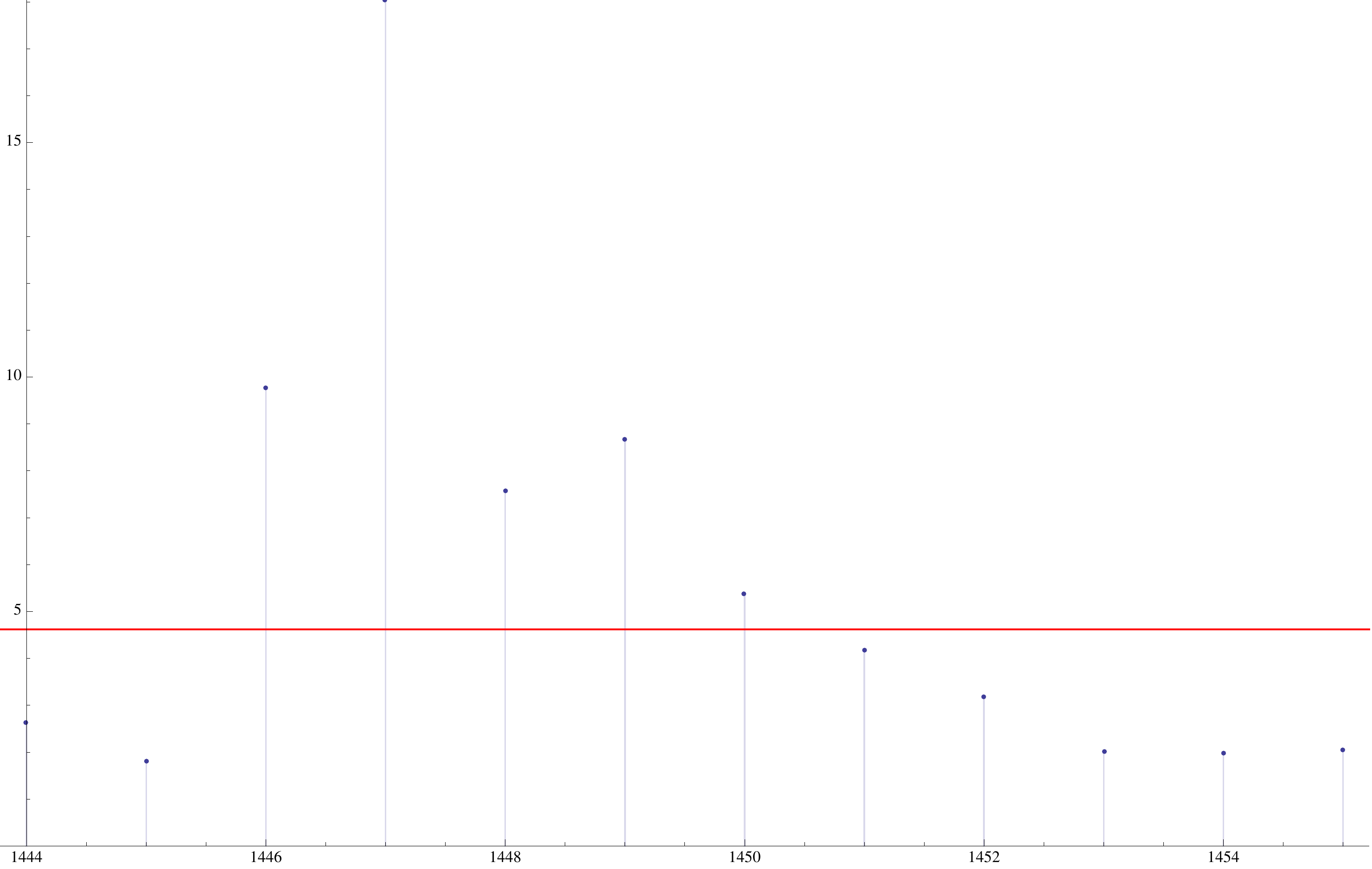}
\caption{Cluster occurring between the observations 1446 and 1450 of the time series in Figure~\ref{fig:timeseries1}}
\label{fig:new-cluster}
\end{center}
\end{figure}


\section{Competition between domains of attraction}

Up until now, we have addressed the issues regarding the effect of having multiple maxima on the existence of EVL, the appearance of clustering, the consequences on the value of the  EI and the impact on the limit of  REPP. This has been done for normalising sequences $(u_n)_{n\in\N}$ such that condition \eqref{eq:un} holds. Typically, these normalising sequences are taken as a one parameter family of linear sequences such that $u_n=y/a_n+b_n$, where $(a_n)_{n\in\N}$, $(b_n)_{n\in\N}$, with $a_n>0$, and one usually looks for non-degenerate limit distributions for $\p(a_n(M_n-b_n)\leq y)$.

The limit is then $\e^{-\tau(y)}$, where $\tau(y)$ is a function of $y$, which is connected to the type of observable we have. For well behaved measures, if the observable satisfies \eqref{eq:type1} then one gets that $\tau(y)=\e^{-y}$ meaning that the the stochastic process falls in the domain of attraction of a Gumbel law (or type 1); if the observable satisfies \eqref{eq:type2} then one gets that $\tau(y)=y^{-\alpha}$ (for $y>0$ and $\alpha>0$) meaning that the the stochastic process falls in the domain of attraction of a Fr\'echet law (or type 2); observable satisfies \eqref{eq:type3} then one gets that $\tau(y)=(-y)^\alpha$ (for $y\geq 0$ and $\alpha>0$) meaning that the the stochastic process falls in the domain of attraction of a Weibull law (or type 3).

In the independent and identically distributed (i.i.d.) setting the domain of attraction is determined by the tail of the distribution $F$, where $F(u)=\mu(X_0\leq u)$. As it was observed in \cite[Remark~1]{FFT10}, in the dynamical setting (with a single maximum), the shape of the observable determines the type of tail of the distribution $F$ and, once the existence is established by the assumptions on the system, then the domain of attraction is determined by the behaviour of the observable at the maximum.

In here, since we have multiple maxima, then we may have different types of behaviour of the observable at the different maximal points. This creates a sort of competition between different domains of attraction, which in fact is not that difficult to settle. First, we start by noting that if the observable is of type 2 (satisfies \eqref{eq:type2}) then $\sup \varphi=h(0)=\infty$;  if the observable is of type 3 (satisfies \eqref{eq:type3}) then $\sup \varphi=h(0)=D<\infty$ and if the observable is of type 1 (satisfies \eqref{eq:type1}) then the maximum value can be either finite or infinite. Hence, since all maxima achieve the same maximum value, we can only have competitions between Fr\'echet and Gumbel and between Weibull and Gumbel.

Note that the appearance of clustering and consequently of an EI less than 1 does not change the type of limit law that applies when we consider one parameter linear sequences. This is the statement of \cite[Corollary~3.7.3]{LLR83}, which also adds that, in fact, we can even perform a linear change of the normalising constants $a_n$ and $b_n$ so that the exact same limit applies as in the corresponding independent case. Hence in what follows we restrict the study to the behaviour of $\mu(U(u))=1-F(u)$ instead of $\mu(\A(u))$ as $u$ goes to $\sup\varphi$.

In what follows we establish the reigning domain of attraction in each such situation. For that purpose, we introduce the following notation $\bar{F}(x)=1-F(x)$.

\subsection{Gumbel versus Fr\'echet}

\begin{proposition}
Let $\varphi:\mathcal{X}\to\mathbb{R}\cup\{\infty\}$ be an observable
which has the following form
$$
\varphi(x)=\varphi_1(x)\I_{U(\zeta)}(x)+\varphi_2(x)\I_{U(f^i(\zeta))}(x),
$$
where $\varphi_1$ is an observable defined in the neighborhood $U(\zeta)$ of $\zeta$,
$\varphi_2$ is an observable defined in the neighborhood  $U(f^i(\zeta))$ of
$f^i(\zeta)$ and they satisfy
$\max_x\varphi_1(x)= \varphi_1(\zeta)=\infty=\max_x\varphi_2(x)= \varphi_2(f^i(\zeta)) $.
Let $F_i$ be the distribution function of $\varphi_i$. If $F_1 $ and $F_2$ belong to the domain of attraction for maxima of the Gumbel and the Frechet distribution, respectively, then the distribution function of  $\varphi $ belongs to the domain of attraction of the Frechet distribution.
\end{proposition}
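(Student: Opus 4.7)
The plan is to show that, in the right tail, the contribution coming from the Fr\'echet-type maximum dominates the contribution from the Gumbel-type one, so that the resulting tail $\bar F$ is regularly varying and hence falls in the Fr\'echet domain of attraction. Once this is established, the EVL itself follows from the general machinery of Section~\ref{section:EVL} together with \cite[Corollary~3.7.3]{LLR83} (which guarantees that the presence of clustering does not alter the type of the limit law, only the normalising constants, through the EI).

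First I would exploit condition $(R1)$ and the disjointness of the neighbourhoods $U(\zeta)$ and $U(f^i(\zeta))$ (ensured by \eqref{def:observable}) to write, for every $u$ sufficiently large,
\[
\bar F(u)=\mu(\varphi>u)=\bar F_1(u)+\bar F_2(u),
\]
where $\bar F_j(u):=\mu\bigl(\{x\in U(\xi_j):\varphi_j(x)>u\}\bigr)$. This is simply because a point where $\varphi>u$ must lie in exactly one of the two neighbourhoods.

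Second, I would invoke the classical characterisations of the domains of attraction (see \cite[Chapter~1]{LLR83}). The assumption $F_2\in\mathcal D(\text{Fr\'echet})$ means that $\bar F_2$ is regularly varying at infinity with some index $-\alpha$, $\alpha>0$: $\bar F_2(u)=u^{-\alpha}L(u)$ with $L$ slowly varying. On the other hand, $F_1\in\mathcal D(\text{Gumbel})$ with right endpoint $u_F=+\infty$ forces $\bar F_1$ to be \emph{rapidly varying}, that is
\[
\lim_{u\to\infty}u^{\alpha}\bar F_1(u)=0\qquad\text{for every }\alpha>0.
\]
(This is precisely the property that excludes polynomial tails from the Gumbel domain when the right endpoint is infinite; it can be read off, for instance, from the von~Mises-type representations of Gumbel tails.) Combining these two facts yields $\bar F_1(u)/\bar F_2(u)\to 0$ as $u\to\infty$, and therefore $\bar F(u)\sim \bar F_2(u)$.

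Third, regular variation being inherited under asymptotic equivalence, $\bar F$ is itself regularly varying of index $-\alpha$, which by the Gnedenko--de~Haan theorem places $F$ in the Fr\'echet domain of attraction with the same index $\alpha$. One can then take the normalising sequences $(a_n),(b_n)$ associated with $F_2$ (possibly with $b_n=0$ and a polynomial $a_n$), and conclude via \eqref{eq:un} and the clustering-robust statement of \cite[Corollary~3.7.3]{LLR83}. The only delicate point is the justification of rapid variation for $\bar F_1$; this is the step I expect to cite from the literature rather than redo, since it depends on a careful use of the von~Mises condition under the hypothesis $\varphi_1(\zeta)=+\infty$ built into the statement.
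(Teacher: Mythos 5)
Your proposal is correct and follows essentially the same strategy as the paper: decompose $\bar F=\bar F_1+\bar F_2$, show that the Gumbel-domain tail $\bar F_1$ is negligible compared to the regularly varying Fr\'echet tail $\bar F_2$, and conclude that $\bar F\sim \bar F_2$ is regularly varying of index $-\alpha$. The one difference worth noting is cosmetic but not trivial: the paper invokes a specific representation $\bar F_1(u)=u^p e^{-u^q}L_1(u)$ from \cite{G84} and then checks by hand that $L(u)=L_2(u)+u^{\alpha+p}e^{-u^q}L_1(u)$ is slowly varying, whereas you appeal to the general fact that Gumbel-domain tails with infinite right endpoint are rapidly varying (so $u^\alpha\bar F_1(u)\to 0$ for all $\alpha>0$) and then use preservation of regular variation under asymptotic equivalence. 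Your route is cleaner and more robust, since rapid variation is the standard and fully general characterisation of such tails, while the representation the paper cites is a convenient sufficient form rather than the most general one; either way the key inequality and the conclusion are the same.
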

\begin{proof}
We start by noting that if $F_1$ belongs to the domain of attraction of the Gumbel distribution, with infinite right endpoint of the support, then we may write
\begin{align}
1-F_1(u)=u^pe^{-u^q}L_1(u),
\end{align}
 where $ p\in \mathbb{R}$, $q>0$ and $L_1$ is a function of slow variation at infinity, i.e., $L_1:(0,+\infty)\to(0,+\infty)$ and,
for all $t>0$,
$$
\lim_{u\to\infty}\frac{L_1(tu)}{L_1(u)}=1
$$
(cf. \cite{G84}).

$F_2$ belongs to the domain of attraction of the Frechet distribution if and only if
\begin{align}
1-F_2(u)=u^{-\alpha}L_2(u),
\end{align}
where $\alpha>0$ and $L_2$ is a function of slow variation at infinity (cf. \cite{H70}).

If
$$
\varphi(x)=\varphi_1(x)\textbf{1}_{U(\zeta)}(x)+\varphi_2(x)\textbf{1}_{U(f^i(\zeta))}(x),
$$
and $X_0=\varphi$, then
$$
\mu(\varphi>u)=\mu(X_0>u)=u^pe^{-u^q}L_1(u)+u^{-\alpha}L_2(u)=u^{-\alpha}\left(L_2(u)+u^{\alpha+p}e^{-u^q}L_1(u)\right).
$$
As $\lim_{u\to\infty}u^{\alpha+p}e^{-u^q}{L_1(u)}=0$ we get that the function
$L(u)=L_2(u)+u^{\alpha+p}e^{-u^q}{L_1(u)}$ is a function of slow variation at infinity.
Hence, the distribution function of $\varphi(x)$ belongs to the domain of attraction of the Frechet distribution.
\end{proof}

\subsection{Gumbel versus Weibull}

\begin{proposition}
Let $\varphi:\mathcal{X}\to\mathbb{R}\cup\{\infty\}$ be an observable
which has the following form
$$
\varphi(x)=\varphi_1(x)\I_{U(\zeta)}(x)+\varphi_2(x)\I_{U(f^i(\zeta))}(x),
$$
where $\varphi_1$ is an observable defined in the neighborhood $U(\zeta)$ of $\zeta$,
$\varphi_2$ is an observable defined in the neighborhood  $U(f^i(\zeta))$ of
$f^i(\zeta)$ and they satisfy
$\max_x\varphi_1(x)= \varphi_1(\zeta)=D=\max_x\varphi_2(x)= \varphi_2(f^i(\zeta)) $.
Let $F_i$ be the distribution function of $\varphi_i$. If $F_1 $ and $F_2$ belong to the domain of attraction for maxima of the Gumbel and the Weibull distribution, respectively, then the distribution function of  $\varphi $, $F$, belongs to the domain of attraction of the Weibull distribution.
\end{proposition}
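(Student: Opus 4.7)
The plan is to mimic the Gumbel versus Fr\'echet argument, rewriting $1-F$ as a power of $D-u$ multiplied by a slowly varying factor, and absorbing the (much smaller) Gumbel contribution into that factor. First I would note that since $\max\varphi_1=\max\varphi_2=D<\infty$ and the two observables have disjoint supports, we have, for $u$ sufficiently close to $D$,
\[
1-F(u) \;=\; \bigl(1-F_1(u)\bigr) \,+\, \bigl(1-F_2(u)\bigr).
\]

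Next I would invoke the standard tail characterisations of the two relevant domains of attraction with finite right endpoint (see \cite{G84,H70}). Since $F_2$ belongs to the Weibull domain of attraction with some index $\alpha>0$, we can write
\[
1-F_2(D-s) \;=\; s^{\alpha}\, L_2(1/s), \qquad s\downarrow 0,
\]
where $L_2$ is slowly varying at infinity. Since $F_1$ belongs to the Gumbel domain of attraction with finite right endpoint $D$, the von Mises representation gives $1-F_1(D-s)=c(s)\exp\bigl(-\int_s^{s_0} a(t)^{-1}\,\dif t\bigr)$ with $a(t)/t\to 0$ as $t\downarrow 0$; this implies that $1/a(t)\geq K/t$ eventually for every $K>0$, whence
\[
1-F_1(D-s) \;=\; o(s^{K}) \qquad \text{as }s\downarrow 0, \ \text{for every }K>0.
\]
That is, the Gumbel tail at a finite endpoint decays faster than every power of $D-u$.

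The last step is to combine these two facts. Setting $t=1/s=1/(D-u)$, we rewrite
\[
1-F(D-s) \;=\; s^{\alpha}\,\Bigl[ L_2(1/s) \,+\, s^{-\alpha}\bigl(1-F_1(D-s)\bigr)\Bigr] \;=\; s^{\alpha}\, L(1/s),
\]
where $L(t):=L_2(t)+t^{\alpha}\bigl(1-F_1(D-1/t)\bigr)$. I would then argue that $L$ is slowly varying at infinity. Using the standard Potter-type bound for slowly varying functions, $L_2(t)\geq t^{-\varepsilon}$ for every $\varepsilon>0$ and all sufficiently large $t$, while by the super-polynomial decay of $1-F_1$ the second summand $t^{\alpha}(1-F_1(D-1/t))$ is $o(t^{-K})$ for every $K>0$. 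Hence the second summand is $o(L_2(t))$, so that $L(t)/L_2(t)\to 1$; since $L_2$ is slowly varying this gives $\lim_{t\to\infty}L(\lambda t)/L(t)=1$ for every $\lambda>0$. Thus $1-F(D-s)=s^{\alpha}L(1/s)$ with $L$ slowly varying at infinity, which is precisely the Weibull domain characterisation, and the conclusion follows.

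The one step that deserves care is the super-polynomial decay of $1-F_1$ at the finite endpoint: it is a property of the Gumbel domain with finite right endpoint but it is not always quoted in the form I need, so I would either reference the de Haan representation in \cite{G84} directly or, if a self-contained statement is preferred, check it from the type 1 assumption \eqref{eq:type1} combined with the assumption $h_1(0)=D<\infty$ by writing $h_1^{-1}(D-yg(D-y))/h_1^{-1}(D-y)\to e^{-y}$ and iterating. Everything else is a routine verification that the sum of a slowly varying function and a function of strictly smaller order is slowly varying.
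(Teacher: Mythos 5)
Your proof is correct and rests on the same two ingredients as the paper's: the decomposition $1-F=(1-F_1)+(1-F_2)$, and the fact that a Gumbel tail with finite right endpoint $D$ decays faster than every power of $D-u$, so it is negligible against the Weibull tail $s^\alpha L(s)$. The route you take is, however, slightly different in packaging. You mimic the Gumbel-versus-Fr\'echet argument: you absorb the Gumbel remainder into the slowly varying factor, writing $1-F(D-s)=s^\alpha L(1/s)$ with $L(t)=L_2(t)+t^\alpha(1-F_1(D-1/t))$, and then verify (via a Potter-type lower bound on $L_2$ and the super-polynomial decay of $1-F_1$) that $L/L_2\to1$, hence $L$ is slowly varying. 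The paper instead works directly with the quotient characterisation of the Weibull domain, showing
\begin{equation*}
\lim_{s\to 0}\frac{1-F(D-sy)}{1-F(D-s)}
=\lim_{s\to 0}\frac{(1-F_2(D-sy))\bigl(\tfrac{1-F_1(D-sy)}{1-F_2(D-sy)}+1\bigr)}{(1-F_2(D-s))\bigl(\tfrac{1-F_1(D-s)}{1-F_2(D-s)}+1\bigr)}=y^\alpha,
\end{equation*}
once $\lim_{s\to0}\tfrac{1-F_1(D-s)}{1-F_2(D-s)}=0$ is established. The paper's route is marginally shorter since it sidesteps the verification that the new slowly varying factor is indeed slowly varying; yours is more symmetric with the Fr\'echet case and makes the representation explicit, which some readers may find clearer. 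Both are valid; the choice is stylistic. One small remark: your justification of the super-polynomial Gumbel decay via the von Mises/de Haan auxiliary function is correct in spirit, but the paper simply quotes this as a standard consequence of membership in the Gumbel domain with finite endpoint, which is all that is needed.
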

\begin{proof}

If
$X_0=\varphi$, then
$$
1-F(u)=\mu(\varphi>u)=\mu(X_0>u)=(1-F_1(u))+(1-F_2(u)).$$

So,
\begin{align}
\nonumber\lim_{s\to0}\frac{ 1-F(Dˆ-sy)}{1-F(D-s)}&=\lim_{s\to0}\frac{ 1-F_1(Dˆ-sy)+ 1-F_2(Dˆ-sy)}{1-F_1(D-s)+1-F_2(D-s)}\\
\label{quoc}&=\lim_{s\to0}\frac{  (1-F_2(Dˆ-sy))  \left(\frac{  1-F_1(Dˆ-sy)  }{  1-F_2(Dˆ-sy)  }     + 1 \right)   }{(1-F_2(Dˆ-s))  \left(\frac{  1-F_1(Dˆ-s)  }{  1-F_2(Dˆ-s)  }     + 1 \right)}.
\end{align}

As $F_2$ belongs to the domain of attraction of the Weibull distribution, then
\begin{align}
\label{Weibull_cond}\lim_{s\to0}\frac{  1-F_2(Dˆ-sy))  }{1-F_2(Dˆ-s)}=y^{\alpha},
\end{align}
for some $\alpha>0$, and
\begin{align}
\label{Weibull_tail} 1-F_2(Dˆ-s))=s^{\alpha}L(s),
\end{align}
where L is a function of slow variation at zero, i.e.,
for all $t>0$,
$$
\lim_{s\to0}\frac{L(ts)}{L(s)}=1.
$$

We note now that if $F_1$ belongs to the domain of attraction of the Gumbel distribution, with finite right endpoint of the support $D$, then, for all $\beta>0$,
\begin{align*}
\lim_{s\to0}\frac{1-F_1(D-s)}{s^{\beta}}=0.
\end{align*}
Thus, by (\ref{Weibull_tail}), we have that
\begin{align}
\label{GumbWeib}\lim_{s\to0}\frac{1-F_1(D-s)}{1-F_2(D-s)}=0.
\end{align}

Using now (\ref{Weibull_cond}) and (\ref{Weibull_tail}) in (\ref{quoc}), we obtain that
\begin{align}
\nonumber\lim_{s\to0}\frac{ 1-F(Dˆ-sy)}{1-F(D-s)}=y^{\alpha},
\end{align}
and so $F$ belongs to the domain of attraction of the Weibull distribution.

\end{proof}

\bibliographystyle{amsalpha}
\bibliography{multiplemaxima}

\providecommand{\bysame}{\leavevmode\hbox to3em{\hrulefill}\thinspace}
\providecommand{\MR}{\relax\ifhmode\unskip\space\fi MR }
\providecommand{\MRhref}[2]{%
  \href{http://www.ams.org/mathscinet-getitem?mr=#1}{#2}
}
\providecommand{\href}[2]{#2}
\begin{thebibliography}{FFTV15}

\bibitem[AFV14]{AFV14}
Hale Ayta\c{c}, Jorge~Milhazes Freitas, and Sandro Vaienti, \emph{Laws of rare
  events for deterministic and random dynamical systems}, Trans. Amer. Math.
  Soc. \textbf{Published online with DOI: 10.1090/S0002-9947-2014-06300-9}
  (2014).

\bibitem[BG97]{BG97}
Abraham Boyarsky and Pawel G{\'o}ra, \emph{Laws of chaos}, Probability and its
  Applications, Birkh\"auser Boston Inc., Boston, MA, 1997, Invariant measures
  and dynamical systems in one dimension. \MR{1461536 (99a:58102)}

\bibitem[BSTV03]{BSTV03}
H.~Bruin, B.~Saussol, S.~Troubetzkoy, and S.~Vaienti, \emph{Return time
  statistics via inducing}, Ergodic Theory Dynam. Systems \textbf{23} (2003),
  no.~4, 991--1013. \MR{MR1997964 (2005a:37004)}

\bibitem[CC13]{CC13}
J.-R. Chazottes and P.~Collet, \emph{Poisson approximation for the number of
  visits to balls in non-uniformly hyperbolic dynamical systems}, Ergodic
  Theory Dynam. Systems \textbf{33} (2013), no.~1, 49--80. \MR{3009103}

\bibitem[CCC09]{CCC09}
Jean-Ren{\'e} Chazottes, Zaqueu Coelho, and Pierre Collet, \emph{Poisson
  processes for subsystems of finite type in symbolic dynamics}, Stoch. Dyn.
  \textbf{9} (2009), no.~3, 393--422. \MR{2566908 (2011g:37014)}

\bibitem[Col01]{C01}
P.~Collet, \emph{Statistics of closest return for some non-uniformly hyperbolic
  systems}, Ergodic Theory Dynam. Systems \textbf{21} (2001), no.~2, 401--420.
  \MR{MR1827111 (2002a:37038)}

\bibitem[dH70]{H70}
L.~de~Haan, \emph{On regular variation and its application to the weak
  convergence of sample extremes}, Mathematical Centre Tracts, vol.~32,
  Mathematisch Centrum, Amsterdam, 1970. \MR{0286156 (44 \#3370)}

\bibitem[FF08]{FF08a}
Ana Cristina~Moreira Freitas and Jorge~Milhazes Freitas, \emph{On the link
  between dependence and independence in extreme value theory for dynamical
  systems}, Statist. Probab. Lett. \textbf{78} (2008), no.~9, 1088--1093.
  \MR{MR2422964 (2009e:37006)}

\bibitem[FFR]{FFR15}
Ana Cristina~Moreira Freitas, Jorge~Milhazes Freitas, and Fagner~Bernardini
  Rodrigues, \emph{The speed of convergence of rare events point processes in
  non-uniformly hyperbolic systems}, in preparation.

\bibitem[FFT10]{FFT10}
Ana Cristina~Moreira Freitas, Jorge~Milhazes Freitas, and Mike Todd,
  \emph{Hitting time statistics and extreme value theory}, Probab. Theory
  Related Fields \textbf{147} (2010), no.~3-4, 675--710. \MR{2639719
  (2011g:37015)}

\bibitem[FFT11]{FFT11}
\bysame, \emph{Extreme value laws in dynamical systems for non-smooth
  observations}, J. Stat. Phys. \textbf{142} (2011), no.~1, 108--126.
  \MR{2749711 (2012a:60149)}

\bibitem[FFT12]{FFT12}
\bysame, \emph{The extremal index, hitting time statistics and periodicity},
  Adv. Math. \textbf{231} (2012), no.~5, 2626--2665. \MR{2970462}

\bibitem[FFT13]{FFT13}
\bysame, \emph{The compound {P}oisson limit ruling periodic extreme behaviour
  of non-uniformly hyperbolic dynamics}, Comm. Math. Phys. \textbf{321} (2013),
  no.~2, 483--527. \MR{3063917}

\bibitem[FFT15]{FFT15}
\bysame, \emph{Speed of convergence for laws of rare events and escape rates},
  Stochastic Process. Appl. \textbf{125} (2015), no.~4, 1653--1687.

\bibitem[FFTV15]{FFTV15}
Ana Cristina~Moreira Freitas, Jorge~Milhazes Freitas, Mike Todd, and Sandro
  Vaienti, \emph{Rare events for the manneville-pomeau map}, Preprint
  arXiv:1503.01372, March 2015.

\bibitem[FP12]{FP12}
Andrew Ferguson and Mark Pollicott, \emph{Escape rates for gibbs measures},
  Ergod. Theory Dynam. Systems \textbf{32} (2012), 961--988.

\bibitem[Fre13]{F13}
Jorge~Milhazes Freitas, \emph{Extremal behaviour of chaotic dynamics}, Dyn.
  Syst. \textbf{28} (2013), no.~3, 302--332.

\bibitem[Gom84]{G84}
M.~Ivette Gomes, \emph{Penultimate limiting forms in extreme value theory},
  Ann. Inst. Statist. Math. \textbf{36} (1984), no.~1, 71--85. \MR{752007
  (85m:60065)}

\bibitem[Hir93]{H93}
Masaki Hirata, \emph{Poisson law for {A}xiom {A} diffeomorphisms}, Ergodic
  Theory Dynam. Systems \textbf{13} (1993), no.~3, 533--556. \MR{MR1245828
  (94m:58137)}

\bibitem[HNT12]{HNT12}
Mark Holland, Matthew Nicol, and Andrei T{\"o}r{\"o}k, \emph{Extreme value
  theory for non-uniformly expanding dynamical systems}, Trans. Amer. Math.
  Soc. \textbf{364} (2012), no.~2, 661--688. \MR{2846347 (2012k:37064)}

\bibitem[HSV99]{HSV99}
Masaki Hirata, Beno{\^{\i}}t Saussol, and Sandro Vaienti, \emph{Statistics of
  return times: a general framework and new applications}, Comm. Math. Phys.
  \textbf{206} (1999), no.~1, 33--55. \MR{MR1736991 (2001c:37007)}

\bibitem[HV09]{HV09}
Nicolai Haydn and Sandro Vaienti, \emph{The compound {P}oisson distribution and
  return times in dynamical systems}, Probab. Theory Related Fields
  \textbf{144} (2009), no.~3-4, 517--542. \MR{MR2496441}

\bibitem[HWZ14]{HayWinZwe14}
Nicolai Haydn, Nicole Winterberg, and Roland Zweim{\"u}ller, \emph{Return-time
  statistics, hitting-time statistics and inducing}, Ergodic theory, open
  dynamics, and coherent structures, Springer Proc. Math. Stat., vol.~70,
  Springer, New York, 2014, pp.~217--227. \MR{3213501}

\bibitem[Kal86]{K86}
Olav Kallenberg, \emph{Random measures}, fourth ed., Akademie-Verlag, Berlin,
  1986. \MR{854102 (87k:60137)}

\bibitem[Kel12]{K12}
Gerhard Keller, \emph{Rare events, exponential hitting times and extremal
  indices via spectral perturbation}, Dyn. Syst. \textbf{27} (2012), no.~1,
  11--27. \MR{2903242}

\bibitem[KL09]{KL09}
Gerhard Keller and Carlangelo Liverani, \emph{Rare events, escape rates and
  quasistationarity: some exact formulae}, J. Stat. Phys. \textbf{135} (2009),
  no.~3, 519--534. \MR{2535206 (2011a:37012)}

\bibitem[LLR83]{LLR83}
M.~R. Leadbetter, Georg Lindgren, and Holger Rootz{{\'e}}n, \emph{Extremes and
  related properties of random sequences and processes}, Springer Series in
  Statistics, Springer-Verlag, New York, 1983. \MR{MR691492 (84h:60050)}

\bibitem[LSV99]{LSV99}
Carlangelo Liverani, Beno{\^{\i}}t Saussol, and Sandro Vaienti, \emph{A
  probabilistic approach to intermittency}, Ergodic Theory Dynam. Systems
  \textbf{19} (1999), no.~3, 671--685. \MR{MR1695915 (2000d:37029)}

\bibitem[PS14]{PS14}
Fran\c{c}oise P\`ene and Beno\^{i}t Saussol, \emph{Poisson law for some
  nonuniformly hyperbolic dynamical systems with polynomial rate of mixing},
  Preprint arXiv:1401.3599, 2014.

\bibitem[Ryc83]{R83}
Marek Rychlik, \emph{Bounded variation and invariant measures}, Studia Math.
  \textbf{76} (1983), no.~1, 69--80. \MR{MR728198 (85h:28019)}

\bibitem[Sau00]{S00}
Beno{\^{\i}}t Saussol, \emph{Absolutely continuous invariant measures for
  multidimensional expanding maps}, Israel J. Math. \textbf{116} (2000),
  223--248. \MR{1759406 (2001e:37037)}

\bibitem[Sau09]{S09}
Benoit Saussol, \emph{An introduction to quantitative {P}oincar\'e recurrence
  in dynamical systems}, Rev. Math. Phys. \textbf{21} (2009), no.~8, 949--979.
  \MR{MR2568049}

\end{thebibliography}

\end{document}